\documentclass[11pt]{article}
\usepackage[utf8]{inputenc}
\usepackage[margin= 2.5 cm, bottom=20mm,footskip=10mm,top=20mm]{geometry}
\usepackage{setspace}

\usepackage{amsthm}
\usepackage{amsmath}
\usepackage{amssymb}
\usepackage{amsfonts}
\usepackage[noadjust]{cite}
\usepackage{epsfig}
\usepackage{mathtools}
\usepackage{enumitem}
\usepackage{stackengine,graphicx}
\usepackage{tikz}
\usetikzlibrary{calc}
\usetikzlibrary{shapes.geometric}
\usetikzlibrary{positioning,quotes}
\usetikzlibrary{decorations.pathmorphing}
\usepackage{comment}

\usepackage{microtype}
\usepackage{hyperref}

\newtheorem{thm}{Theorem}[section]
\newtheorem{lem}[thm]{Lemma}
\newtheorem{prop}[thm]{Proposition}

\newtheorem{conj}[thm]{Conjecture}
\newtheorem{qu}[thm]{Question}
\newtheorem{coro}[thm]{Corollary}

\newtheorem{claim}{Claim}
\newtheorem{subclaim}{Observation}
\newtheorem{fact}[thm]{Fact}

\newtheorem{defi}[thm]{Definition}
\theoremstyle{definition}

\theoremstyle{plain}

\newcounter{proofctr}

\pretocmd{\proof}{%
  \stepcounter{proofctr}%
  \setcounter{claim}{0}%
}{}{}

\newenvironment{claimproof}{
  \begingroup
  \let\origqed\qedsymbol
  \renewcommand{\qedsymbol}{$\blacktriangleleft$}
  \par\noindent\textit{Proof of Claim \theclaim.}
}{
  \qed
  \par\medskip
  \let\qedsymbol\origqed
  \endgroup
}

\newcounter{claimproofctr}

\pretocmd{\claimproof}{%
  \stepcounter{claimproofctr}%
  \setcounter{subclaim}{0}%
}{}{}

\newenvironment{subclaimproof}{
  \begingroup
  \let\origqed\qedsymbol
  \renewcommand{\qedsymbol}{$\blacklozenge$}
  \par\noindent\textit{Proof of Observation~\thesubclaim.}
}{
  \qed
  \par\medskip
  \let\qedsymbol\origqed
  \endgroup
}

\pretocmd{\proof}{\setcounter{claim}{0}}{}{}
\pretocmd{\claimproof}{\setcounter{subclaim}{0}}{}{}

\def\Pr{\mathbf{P}} 
\def\Exp{\mathbf{E}} 
\newcommand{\tr}{\textrm{tr}}
\newcommand{\sgn}{\textrm{sgn}}

\newcommand{\NN}{\mathbb{N}}
\newcommand{\RR}{\mathbb{R}}

\newcommand{\eps}{\varepsilon}
\newcommand{\Hom}{\textrm{Hom}}
\newcommand{\Aut}{\textrm{Aut}}
\newcommand{\End}{\textrm{End}}
\newcommand{\id}{\textrm{id}}

\newcommand{\eqshift}{%
  \mathrel{%
    \ooalign{%
      \hfil\raise 0.9ex\hbox{$\circ$}\hfil\cr
      $=$\cr
    }%
  }%
}

\newcommand{\cupdot}{\mathbin{\mathaccent\cdot\cup}}

\newcommand{\sym}{\mathrm{sym}}
\newcommand{\Herm}{\mathrm{Herm}}

\begin{document}

\setstretch{1.2}

\title{
A Characterization for Spectral and Trace Positivity of Matrix Words\thanks{
FG: Research supported by Czech Science Foundation Project 21-21762X. With institutional support RVO:67985807. FW: Research supported by NSF grant  DMS-2401414. 
}
\\
}
\author{Frederik Garbe\thanks{Institute of Computer Science, Czech Academy of Sciences, Czech Republic, E-mail: {\tt garbe@cs.cas.cz}} \and Fan Wei\thanks{Department of Mathematics, Duke University, 120 Science Drive, Durham, NC 27710, USA. E-mail: {\tt fan.wei@duke.edu}} }

\date{}

\maketitle

\begingroup
\makeatletter
\renewcommand{\@makefnmark}{}%
\footnotetext[0]{\emph{2020 Mathematics Subject Classification.}
Primary 15B48; Secondary 05C50, 15A45, 14P10, 46L05, 82B10.}
\makeatother
\endgroup

\begin{abstract}

While the global positivity of noncommutative polynomials is deeply tied to algebraic rigidity and operator algebras, with connections ranging from exact sum-of-squares certificates to the refutation of Connes’ Embedding Conjecture, the multiplicative structure governing the positivity of individual matrix products has remained poorly understood. We investigate this phenomenon at the level of individual words. Under what precise combinatorial conditions does a word in noncommuting matrix variables and their formal transposes universally evaluate to matrices with nonnegative eigenvalues or nonnegative trace, regardless of the dimension or entries of the substituted matrices?

In this paper, we provide a complete structural characterization of universally spectrally positive matrix words. Crucially, we establish that in the pure monomial setting, both spectral and trace positivity exhibit an exact rigidity phenomenon analogous to Hermitian-square representations. This contrasts sharply with the broader polynomial setting, where tracial rigidity is known to fail even for approximation. Furthermore, we establish that this structural rigidity extends seamlessly to the complex and infinite-dimensional domains. We show that the exact same purely combinatorial conditions completely characterize matrix words evaluated over complex matrices, bounded operators on arbitrary Hilbert spaces, and positive tracial states on $C^*$-algebras and von Neumann algebras.

As one application of our main theorems, we fully resolve open questions initially posed by Lieb and Pierce, and later conjectured by Hillar and Johnson, concerning the underlying algebraic structure of the summands in Bessis-Moussa-Villani (BMV) trace polynomials from quantum statistical mechanics.
We achieve these results by establishing a surprising connection to graph theory, leading to a matrix analogue of the positive graph conjecture posed by Camarena, Cs\'oka, Hubai, Lippner, and Lov\'asz. 

\end{abstract}

\newpage
\section{Introduction}\label{sec:Intro}

Determining the global nonnegativity of polynomials over $\mathbb{R}^k$ is a problem with a rich and long history, fundamentally rooted in Hilbert's 17th problem. In the classical commutative setting, the algebraic structure is notoriously delicate; global positivity does not guarantee that a polynomial can be expressed as a sum of squares. As demonstrated by Motzkin~\cite{Motz67}, there exist real polynomials that are globally nonnegative yet algebraically impossible to decompose into sums of squared polynomials, necessitating the use of rational functions instead, as established by Artin \cite{Art27}.

In stark contrast, the noncommutative world exhibits a remarkable degree of algebraic rigidity regarding matrix-level positivity, where a polynomial evaluated on matrices yields a positive semidefinite matrix. Helton \cite{Hel02} proved that if a symmetric noncommutative polynomial $p$ evaluates to a positive semidefinite matrix under every real matrix substitution of arbitrary size, then it is a sum of Hermitian squares, i.e. $p = \sum_j h_j h_j^T$. Here $p^T$ denotes the formal transpose, obtained by reversing the order of the factors in each monomial and replacing each $X_i$ by $X_i^T$ and vice versa. The polynomial $p$ is said to be \emph{symmetric} if it satisfies $p=p^T$;\footnote{Note that this is the terminology used by Helton. We will later work with a different notion of symmetry for monomials, see Definition~\ref{def:symmetric}.} for example, $p = X_1 X_1^T - 2(X_3 X_2) (X_3 X_2)^T$. However, this framework fundamentally relies on matrix-level positivity of the evaluation and therefore applies only to symmetric polynomials. In contrast, a polynomial whose evaluations merely have nonnegative eigenvalues need not be symmetric. Consequently, Helton's theorem does not cover the problem of characterizing arbitrary matrix products with universally nonnegative trace or spectrum.

Furthermore, replacing matrix-level positivity by the weaker requirement of trace nonnegativity does not lead to an equally satisfactory structural theory. In this setting, Klep and Schweighofer \cite{KS08,BDKS14} established a connection to Connes' Embedding Conjecture, a major conjecture in operator algebras. Assuming the conjecture, every symmetric polynomial that is tracially nonnegative on all norm-bounded self-adjoint matrix substitutions admits an approximation by sums of Hermitian squares and commutators; and conversely, this tracial Positivstellensatz is equivalent to Connes' embedding conjecture. However, the breakthrough $\mathrm{MIP}^*=\mathrm{RE}$ theorem \cite{JNV+21} in quantum complexity theory implied the failure of Connes' Embedding Conjecture and thereby ruled out such a fully general tracial Positivstellensatz. 
Moreover, a stronger implication of \cite{JNV+21} led to undecidability results for broad classes of positivity problems in operator algebras. 
Thus, even for symmetric polynomials and the comparatively structured notion of trace positivity, neither a general algebraic characterization nor a general decidability theory is available. There also exist concrete examples which show that exact sum-of-squares equivalences fail to capture trace positivity. For instance, the Motzkin-based polynomial $f(X,Y)=YX^4Y+XY^4X-3XY^2X+1$ is universally trace-positive over all symmetric matrix substitutions~\cite[Example 4.4]{KS08}. Evaluating $f$ on the substitutions $X = X_1 X_1^T$ and $Y = X_2 X_2^T$ therefore yields a symmetric polynomial that is universally trace-positive over all arbitrary matrix substitutions. However, it cannot be decomposed into a sum of Hermitian squares and commutators. Indeed, if such a decomposition existed, evaluating it on $1 \times 1$ matrices would cause all commutators to vanish, incorrectly implying that the scalar polynomial $x^8y^4 + x^4y^8 - 3x^4y^4 + 1$ is a classical sum of squares which contradicts Motzkin's classical result~\cite{Motz67}.

These global frameworks reveal a sharp dichotomy for general noncommutative polynomials: matrix-level positivity enjoys exact algebraic rigidity, while trace positivity fundamentally resists even approximate characterization. A natural question arises: does rigidity for spectral or trace positivity re-emerge when passing from arbitrary polynomials to monomials, i.e. individual matrix products? Formally, a matrix product is a {\it word} in matrix variables and their formal transposes, for example $W = X_1 X_2 X_1^T X_3$. Thus a word is a finite, ordered string of variables where the letters are not assumed to commute. When real square matrices of the same dimension are substituted for these variables, the word evaluates to a single matrix via standard matrix multiplication.
Motivated by the failure of general polynomial positivity theories and the highly structured nature of individual matrix products, this paper addresses the following question:
\begin{quote}\textbf{Main Question} ($\star$). Under what structural conditions does a word of matrices force real eigenvalues, nonnegative eigenvalues, or a nonnegative trace, regardless of the specific matrices forming its constituent factors, and regardless of whether those matrices are arbitrary, symmetric, or positive definite?

\smallskip

In other words, can we predict universality for the spectrum or trace of a product purely by analyzing its formal word structure, irrespective of the sizes or entries of the matrices substituted into it?
\end{quote}

Similar questions naturally arise for complex matrices, where the transpose is replaced by the conjugate transpose. Section \ref{sec:generalizations} relates the real and complex finite-dimensional spectral problems through the standard real representation of a complex matrix as a real matrix of twice the dimension; the trace result is then obtained from the same combinatorial characterization. Beyond finite dimensions, one can ask the analogous questions for bounded operators on Hilbert spaces and for positive tracial states on $C^*$-algebras. We provide complete combinatorial characterizations in all these settings.

Beyond its intrinsic structural interest, this question is closely related to problems in quantum statistical mechanics, operator algebras, and matrix analysis. 
For instance, one motivation comes from a famous classical problem in statistical physics: the Bessis-Moussa-Villani (BMV) conjecture \cite{BMV75}. Originally formulated to establish explicit error bounds for Padé approximants of partition functions in quantum mechanical systems, the BMV conjecture was later equivalently reformulated by Lieb and Seiringer~\cite{LieSeiBMV04} as a question involving the nonnegativity of traces of matrix polynomials. Specifically, they asked whether the polynomial $p(t) = \mathrm{Tr}((A+Bt)^m)$ has nonnegative coefficients for all positive definite matrices $A$ and $B$. Because the coefficient of $t^k$ in $p(t)$ is exactly the trace of the sum of all words of length $m$ containing exactly $k$ occurrences of $B$, investigation of this polynomial naturally led researchers to isolate the constituent products. Stripping away collective positivity effects induced by summation, Lieb, and independently Pierce (see references in \cite{HillarJohnson}), posed questions concerning the pure combinatorial structure of individual matrix words.
This may be viewed as a special case of our general question, for which Hillar, Johnson, and Armstrong also conjectured a complete characterization.

\begin{qu}[originally Lieb; Armstrong-Hillar \cite{ArmHillar}, Hillar-Johnson \cite{HillarJohnson}]\label{q2}
Is the trace of a given word in two letters positive for all real symmetric positive definite matrices $A$ and $B$? What is the characterization?\end{qu}
\begin{qu}[originally Lieb, Pierce; Hillar-Johnson \cite{HillarJohnson}]\label{q3}
Are all the eigenvalues of a given word in two letters positive for all real symmetric positive definite matrices $A$ and $B$? What is the characterization?
\end{qu}

As reported in \cite{HillarJohnson}, these two questions of Lieb and Pierce were initially supported by extensive numerical evidence. Multiple independent researchers tested tens of thousands of positive definite matrices and their products without finding a single negative trace. This made it tempting to conjecture an affirmative answer to Question \ref{q2}, which would in turn imply the BMV conjecture. Eventually, however, Hillar and Johnson \cite{HillarJohnson} discovered an explicit counterexample to Question \ref{q2}, specifically the word $W = (BA)^2AB$. This discovery demonstrated that the matrix substitutions required to force a negative trace can be quite rare and difficult to find. Because empirical simulations seem to be so unreliable for this problem, determining whether a word is strictly positive requires bypassing numerical tests entirely and directly classifying its exact combinatorial structure. Looking for a foundational algebraic mechanism, Hillar and Johnson essentially proposed that the correct characterization for Questions \ref{q2} and \ref{q3} should be a natural combinatorial generalization of the following well-known facts:

\begin{fact}\label{Fact:AAt}\leavevmode
\begin{enumerate}
    \item Let $A \in \mathbb{R}^{n \times n}$. Then $A$ is a symmetric positive semidefinite matrix if and only if there exists a matrix $B \in \mathbb{R}^{n \times n}$ such that $A = BB^{T}$.\label{itm:aat}
    \item For symmetric real matrices $A$ and $B$: the eigenvalues of $AB$ are real if at least one is positive semidefinite, and nonnegative if both are.\label{itm:possemi}
\end{enumerate}
\end{fact}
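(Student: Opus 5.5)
For item~\ref{itm:aat}, the plan is to handle the two directions separately and use the spectral theorem for the nontrivial one. If $A = BB^T$, then $A^T = (B^T)^T B^T = BB^T = A$, so $A$ is symmetric. For any $x \in \mathbb{R}^n$ we have $x^T A x = (B^T x)^T (B^T x) = \|B^T x\|^2 \ge 0$, so $A$ is positive semidefinite. Conversely, if $A$ is symmetric positive semidefinite, the real spectral theorem gives $A = Q D Q^T$ with $Q$ orthogonal and $D = \mathrm{diag}(\lambda_1,\dots,\lambda_n)$. Each $\lambda_i \ge 0$, since $\lambda_i = q_i^T A q_i$ for the unit eigenvector $q_i$. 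Set $B = Q D^{1/2}$ (or $B = QD^{1/2}Q^T$ if a symmetric factor is preferred). Then $BB^T = Q D^{1/2} D^{1/2} Q^T = A$, and $B$ is a real $n\times n$ matrix as required.

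For item~\ref{itm:possemi}, I would first reduce to the case where the positive semidefinite factor is positive definite, and then use a similarity argument. Suppose first that $A$ is positive definite and $B$ is symmetric. Then $A^{1/2}$ is invertible and
\[
AB = A^{1/2}\bigl(A^{1/2} B A^{1/2}\bigr) A^{-1/2},
\]
so $AB$ is similar to the symmetric matrix $A^{1/2} B A^{1/2}$. Its eigenvalues are therefore real. If $B$ is also positive semidefinite, then $A^{1/2}BA^{1/2} = (A^{1/2}B^{1/2})(A^{1/2}B^{1/2})^T$, which is positive semidefinite by item~\ref{itm:aat}, so the eigenvalues are nonnegative. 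The case where $B$ is the positive semidefinite factor follows from this one, because $AB$ and $BA$ have the same characteristic polynomial.

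The main obstacle is the singular case, where $A$ is only positive semidefinite and the similarity above is unavailable. Here I would use continuity of eigenvalues. Let $A_\varepsilon = A + \varepsilon I$, which is positive definite for $\varepsilon > 0$. By the previous step, the characteristic polynomial of $A_\varepsilon B$ has only real roots, and only nonnegative roots when $B \succeq 0$. As $\varepsilon \to 0$, the coefficients of this polynomial converge to those of the characteristic polynomial of $AB$. The roots of a monic polynomial depend continuously on its coefficients, and the sets $\mathbb{R}$ and $[0,\infty)$ are closed, so every root of the limit polynomial is real, and nonnegative in the positive semidefinite case.

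Alternatively, one can avoid limits entirely: $AB = A^{1/2}(A^{1/2}B)$ has the same nonzero eigenvalues as $A^{1/2} B A^{1/2}$, since $XY$ and $YX$ share nonzero eigenvalues, and $A^{1/2} B A^{1/2}$ is symmetric, and positive semidefinite when $B$ is. I would present this version because it handles the singular and nonsingular cases at once.
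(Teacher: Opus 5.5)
Your proof is correct; the paper gives no proof of this fact and simply quotes it as well known. Your limit-free version, which writes $AB = A^{1/2}\,(A^{1/2}B)$ and swaps the factors to reach the symmetric matrix $A^{1/2}BA^{1/2}$ (positive semidefinite when $B$ is), is the standard justification and relies on the same cyclic invariance the paper uses throughout (Fact~\ref{Fact:ABBA}). With it, your $\varepsilon$-perturbation argument, while valid, is no longer needed.
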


Building on these principles, they conjectured that universally trace-positive matrix words might be characterized via products whose factors are expressions of the form $LL^T$ for some subword $L$.
However, because Stahl~\cite{stahl} ultimately proved the BMV conjecture using complex analysis, his approach bypassed the noncommutative word expansion of the trace entirely. As a result, the algebraic mechanism originally envisioned by Lieb and Pierce was left open. How exactly does the spelling of a matrix word govern its spectrum and trace? Progress on this specific combinatorial problem stalled, as counterexamples remained difficult to construct systematically without a unifying theory and as discussed earlier, empirical evidence could be quite misleading. 
As a corollary of our main theorem answering Main Question ($\star$), we can also fully answer Questions \ref{q2} and \ref{q3}, including a full characterization, when the substituted matrices are required to be symmetric or can be arbitrary matrices with positive eigenvalues. The results also generalize to complex valued matrices with the transpose replaced with the conjugate transpose.

\

Our technique relates the semidefiniteness of a symbolic matrix product to the \emph{positive graph conjecture}~\cite[Conjecture~1]{PosGr16}, which was first posed as a problem by Lov\'asz in~\cite[Problem 22]{LoProb08}. Roughly speaking, a graph is \emph{positive} if its homomorphism density into any edge-weighted graph is always nonnegative. The mathematical bridge connecting these two seemingly distinct domains stems from the fact that the trace of a matrix product can be expanded algebraically as a sum over indices, which naturally corresponds to counting homomorphisms (or walks) from a cycle graph into an edge-weighted target graph. The conjecture asserts that a graph $G$ is positive if and only if it is \emph{symmetric}, in the sense that $G$ can be constructed by gluing two isomorphic graphs along a common independent set. (See Subsection \ref{subsec:pg} for precise definitions). To date, this beautiful conjecture remains widely open.

One of our motivations is that, if there existed a product of matrices that is non-symmetric yet always has positive eigenvalues, then the carefully designed auxiliary graph $G$ from the main proof would disprove the positive graph conjecture. However, our main theorem (Theorem \ref{thm:realreal}) shows that no counterexample can arise in this fashion. Furthermore, Theorem \ref{thm:realreal} may be regarded as a ``matrix analogue'' of the positive graph conjecture, which we prove holds universally.

\section{Main Results}

Given a set of noncommuting variables $X_1, \dots, X_\ell$, we can symbolically introduce a formal transpose $\cdot^T$ satisfying $(X_i^{T})^T=X_i$ and $(X_i X_j)^T=X_j^T X_i^T$. We consider formal expressions of the form $\prod_{j=1}^k X_{\iota(j)}^{\tau(j)}$, where $\iota:[k]\rightarrow[\ell]$ and $\tau:[k]\rightarrow\{1,T\}$.
We call such an expression a \emph{matrix word} (or simply a \emph{word}). The integer $k$ is the \emph{length} (or \emph{degree}) of the word, and $\ell$ is the number of distinct variables. By applying the transposition rules, the formal transpose $P^T$ of any word $P$ is another matrix word.
For any integer $n \in \NN$, a word $P$ defines a natural evaluation map $P: (\RR^{n\times n})^\ell \rightarrow \RR^{n\times n}$. For a tuple of matrices $(A_1, \dots, A_\ell)$, the evaluation $P(A_1, \dots, A_\ell)$ is obtained by substituting $X_i$ with $A_i$ and applying standard matrix multiplication and transposition.
We investigate the following positivity properties of matrix words:

\begin{defi}\label{def:psd}
Let $P$ be a matrix word in $\ell$ variables, where each variable $X_i$ is restricted to a permissible matrix class $\mathcal{X}_i$ (e.g., arbitrary, symmetric, or positive semidefinite real matrices). Let $\mathcal{X} = \mathcal{X}_1 \times \cdots \times \mathcal{X}_\ell$.
\begin{enumerate}
\item We say $P$ is \textbf{\emph{real-eigenvalued over $\mathcal{X}$}} if for every $n \in \NN$ and all tuples $(A_1, \dots, A_\ell) \in \mathcal{X}$ of $n\times n$-matrices, all eigenvalues of $P(A_1, \dots, A_\ell)$ are real.
\item We say $P$ is \textbf{\emph{spectrally nonnegative over $\mathcal{X}$}} if for every $n \in \NN$ and all tuples $(A_1, \dots, A_\ell) \in \mathcal{X}$ of $n\times n$-matrices, all eigenvalues of $P(A_1, \dots, A_\ell)$ are real and nonnegative.
\item We say $P$ is \textbf{\emph{trace nonnegative over $\mathcal{X}$}} if for every $n \in \NN$ and all tuples $(A_1, \dots, A_\ell) \in \mathcal{X}$ of $n\times n$-matrices, the trace $\tr(P(A_1, \dots, A_\ell))$ is nonnegative.
\end{enumerate}
\end{defi}

It is not immediately obvious whether the positivity of a given matrix product is even a decidable problem. Indeed, standard algebraic certificates are known to fail; as shown by the equivalence to the Connes Embedding Conjecture \cite{KS08}, general symmetric polynomials evaluated on symmetric matrices cannot always be approximated by sums of squares and commutators.
Therefore, the contribution of this article is to characterize word positivity via an easily verifiable combinatorial property. A natural starting point is Fact~\ref{Fact:AAt} that for any real matrix $A$, the product $AA^T$ is symmetric and has only nonnegative eigenvalues. One might be tempted to suggest a straightforward generalization of this square structure as a characterization for spectrally nonnegative matrix words. However, we must also account for the following property of matrix spectra:

\begin{fact}\label{Fact:ABBA}
Let $A, B \in \RR^{n\times n}$. Then $AB$ and $BA$ have the same eigenvalues.
\end{fact}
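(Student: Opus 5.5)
The plan is to show that $AB$ and $BA$ have the same characteristic polynomial. Equality of the characteristic polynomials gives the same eigenvalues with the same algebraic multiplicities, which is slightly stronger than what Fact~\ref{Fact:ABBA} asserts. There are two standard routes, and I would take the block-matrix route because it works over any field and avoids a limiting argument.

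\textbf{Step 1 (block similarity).} Consider the $2n\times 2n$ matrices
\[
M_1=\begin{pmatrix} AB & 0\\ B & 0\end{pmatrix},\qquad M_2=\begin{pmatrix} 0 & 0\\ B & BA\end{pmatrix},\qquad S=\begin{pmatrix} I & A\\ 0 & I\end{pmatrix},
\]
where $S$ is invertible with $S^{-1}=\begin{pmatrix} I & -A\\ 0 & I\end{pmatrix}$. I would verify directly that $M_1S=SM_2$. Indeed, both products equal $\begin{pmatrix} AB & ABA\\ B & BA\end{pmatrix}$. Hence $M_1$ and $M_2$ are similar, so they have equal characteristic polynomials.

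\textbf{Step 2 (read off the polynomials).} Both $M_1$ and $M_2$ are block triangular. Therefore
\[
\det(tI-M_1)=t^n\det(tI-AB),\qquad \det(tI-M_2)=t^n\det(tI-BA).
\]
Equating these and cancelling $t^n$ in the polynomial ring $\RR[t]$, which is an integral domain, gives $\det(tI-AB)=\det(tI-BA)$. The spectra therefore agree, including multiplicities, and in particular any complex eigenvalues agree as well.

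\textbf{Alternative route and main obstacle.} The argument is routine, and the only point needing care is the value $0$, where a naive approach breaks down. The naive argument says: if $ABv=\lambda v$ with $\lambda\neq 0$, then $BA(Bv)=\lambda Bv$ and $Bv\neq 0$. This handles nonzero eigenvalues but needs a separate treatment of $\lambda=0$, and it does not obviously give multiplicities. For $\lambda=0$ one can use $\det(AB)=\det(A)\det(B)=\det(BA)$, which shows $AB$ is singular exactly when $BA$ is. Alternatively, when $A$ is invertible one has $BA=A^{-1}(AB)A$, and the general case follows by density of invertible matrices together with continuity of the characteristic polynomial's coefficients. The block-matrix argument above handles all of this at once, so I would present that.
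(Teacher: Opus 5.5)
Your proof is correct and complete. Both $M_1S$ and $SM_2$ equal $\left(\begin{smallmatrix} AB & ABA\\ B & BA\end{smallmatrix}\right)$, the two block-triangular determinants are $t^n\det(tI-AB)$ and $t^n\det(tI-BA)$, and cancelling $t^n$ in the integral domain $\RR[t]$ gives equal characteristic polynomials.

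The paper gives no proof of Fact~\ref{Fact:ABBA}; it quotes it as standard, so there is no argument of the paper's to compare yours with. Your stronger conclusion, equality of eigenvalues with algebraic multiplicities, is what the paper's later remark needs when it says that \emph{traces}, and not only spectra as sets, are invariant under cyclic permutation, if that remark is read through eigenvalues. Of course $\tr(AB)=\tr(BA)$ also follows directly from the definition of the trace.

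Your determinant argument is inherently finite-dimensional. In the operator setting of Section~\ref{sec:generalizations}, the paper instead uses only the weaker fact that $AB$ and $BA$ share their nonzero spectrum, which is the version that remains true in $\mathcal{B}(\mathcal{H})$.
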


Note that Fact~\ref{Fact:ABBA} does not imply that any arbitrary permutation of a matrix product retains the same eigenvalues (e.g., $ABCD$ and $ACBD$ generally have different spectra). However, Fact~\ref{Fact:ABBA} does imply that eigenvalues and traces are invariant under \emph{cyclic permutation}. Thus, we require a structural definition that recognizes not only a literal square like $X_1 X_2 X_2^T X_1^T = (X_1 X_2)(X_1 X_2)^T$, but also its cyclic permutations, such as $X_2 X_2^T X_1^T X_1$. This leads to the following definition.

\begin{defi}\label{def:symmetric}\leavevmode
Let $P = \prod_{i=1}^k X_{\iota(i)}^{\tau(i)}$ be a matrix word.
\begin{enumerate}
\item We call a matrix word $Q$ \textbf{\emph{cyclically equivalent}} to $P$, denoted by $P\eqshift Q$, if there exists $j \in [k]$ such that $\prod_{i=j+1}^{j+k} X_{\iota(i)}^{\tau(i)} = Q$, where indices are considered modulo $k$.
\item We call $P$ \textbf{\emph{symmetric}} if there exists a matrix word $L$ such that $P\eqshift LL^T$. Formally, $k$ is even and there exists $j \in [k]$ such that $\prod_{i=j+1}^{j+k} X_{\iota(i)}^{\tau(i)} = LL^T$, where indices are considered modulo $k$ and $L = \prod_{i=j+1}^{j+\tfrac{k}{2}} X_{\iota(i)}^{\tau(i)}$.
\end{enumerate}
\end{defi} 
For example, $P_1 = X_1X_2 X_2^T X_1^T$ and $P_2 = X_2^T X_3 X_3^T X_2 X_1 X_1^T$ are both symmetric because $P_1 = (X_1X_2)(X_1X_2)^T$ and, up to cyclic permutation, $P_2$ can be written as $(X_1^T X_2^T X_3)(X_1^T X_2^T X_3)^T$.
We caution the reader that this combinatorial definition, requiring cyclic equivalence to a literal square, is more restrictive than the standard free algebra notion of a symmetric word. In the free algebra, one might naturally define a word as symmetric if it is cyclically equivalent to its formal transpose ($W^T \eqshift W$). However, under our definition, satisfying $P^T \eqshift P$ is necessary but not sufficient. For instance, when we later consider variables restricted to be symmetric matrices, the word $P = X_1^\sym X_2^\sym$ satisfies $P^T \eqshift P$, but it cannot be written as $LL^T$. Consequently, it is neither symmetric according to Definition~\ref{def:symmetric} nor always real-eigenvalued.

\subsection{Characterizations for Arbitrary Matrix Substitutions}
Our first main result establishes that when evaluated on arbitrary real matrices, a matrix word satisfies any of the positivity conditions from Definition~\ref{def:psd} if and only if it is structurally symmetric. If we say that a matrix word $P$ with $\ell$ variables has one of the properties from Definition~\ref{def:psd} \emph{over arbitrary real square matrices}, we mean that $\mathcal{X}_i=\bigcup_{n\in\mathbb N}\mathbb R^{n\times n}$ for every $i\in[\ell]$.

\begin{thm}\label{thm:realreal}
Let $P$ be a matrix word. The following statements are equivalent:
\begin{enumerate}
\item $P$ is symmetric (i.e., $P\eqshift LL^T$, where $L$ is a matrix word).\label{itm:realrealsym}
\item $P$ is real-eigenvalued over arbitrary real square matrices.\label{itm:realrealrealev}
\item $P$ is spectrally nonnegative over arbitrary real square matrices.
\item $P$ is trace nonnegative over arbitrary real square matrices.\label{itm:realrealtrace}
\end{enumerate}
\end{thm}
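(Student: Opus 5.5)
Easy implications first. If $P\eqshift LL^T$, then by Fact~\ref{Fact:ABBA} (cyclic invariance of spectrum) the spectrum of $P(A_1,\dots,A_\ell)$ equals that of $M M^T$ with $M=L(A_1,\dots,A_\ell)$. By Fact~\ref{Fact:AAt}, $MM^T$ is positive semidefinite, so all eigenvalues are real and nonnegative. This gives (1)$\Rightarrow$(3). The implications (3)$\Rightarrow$(2) and (3)$\Rightarrow$(4) are trivial, the latter because the trace is the sum of the eigenvalues. So it remains to show (2)$\Rightarrow$(1) and (4)$\Rightarrow$(1). In each case we prove the contrapositive: for a non-symmetric $P$ we construct explicit substitutions with a non-real eigenvalue, respectively negative trace.

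For (4)$\Rightarrow$(1) I would follow the graph-homomorphism bridge described in the introduction. Expand
\[
\tr P(A_1,\dots,A_\ell)=\sum_{v_1,\dots,v_k}\prod_{j=1}^k (A_{\iota(j)}^{\tau(j)})_{v_jv_{j+1}},
\]
read as a weighted count of closed walks of length $k$, i.e.\ homomorphisms of a $k$-cycle whose edges are labelled by $(\iota(j),\tau(j))$, with $\tau$ giving an orientation. Choose the $A_i$ to be $\pm1$ or $0/1$ matrices supported on the edges of a carefully chosen directed, labelled target graph, so that the trace counts label- and orientation-preserving closed walks with signs. The idea is to pick the target so that only the "identity-like" walks tracing the word itself contribute at leading order, and the sign of that contribution can be forced negative.

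A cleaner way to implement this is a generic or random choice: take $A_i$ with independent Gaussian entries, or $A_i=I+\varepsilon G_i$, and expand in the entries. Then $\tr P$ is a polynomial in the entries. Its monomials that are "squares" (every entry appearing an even number of times) correspond exactly to ways of pairing the positions of the cycle. One shows that a pairing compatible with the labels and orientations, i.e.\ the condition under which the expectation over sign-symmetric entries is nonzero, is essentially a reflection of the cycle. This is exactly the statement $P\eqshift LL^T$.

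Hence, if $P$ is not symmetric, we aim to find a monomial of $\tr P$ that is multilinear-odd in some entry and use it. Concretely, I would construct a substitution in which a single closed walk realizes the cyclic word with all $k$ edges distinct. For example, take the $k$-cycle target graph with the vertex sequence $v_1\to v_2\to\cdots\to v_k\to v_1$, put entries of $A_{\iota(j)}$ on the edges oriented according to $\tau(j)$, and weight one edge by $-1$. Then $\tr P$ equals $-c$, where $c$ is the number of symmetries of the labelled cycle, plus the contributions of other closed walks on this graph. The \textbf{main obstacle} is controlling those other walks: a walk may backtrack along a transposed edge, and such walks use edges an even number of times, contributing positively. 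They arise exactly from "folding" the cycle, and a fold covering the whole cycle is precisely a decomposition $P\eqshift LL^T$. So I would prove a combinatorial lemma: if $P$ is non-symmetric, then all closed walks of length $k$ in the $k$-cycle target labelled by $P$ traverse every edge exactly once. If that is too strong, I would weight the edges with distinct independent variables $x_e$ and argue that the coefficient of $\prod_e x_e$ is nonzero while other monomials cannot cancel it. Then a choice of signs and magnitudes, with one $x_e$ negative and large, makes the trace negative.

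For (2)$\Rightarrow$(1), I would reduce to trace. If all eigenvalues of $P(A)$ are real for every substitution, apply this to block substitutions and to the scaled variants $A_i\mapsto$ rotation-twisted versions, e.g.\ $A_i\otimes R_{\theta_i}$ with rotations $R_\theta$. Then $\tr P(A\otimes R)=\tr P(A)\cdot\tr(\text{product of } R^{\pm}) $, which is $2\tr P(A)\cos\phi$ with $\phi$ a signed sum of angles that is nonzero exactly when the word is "unbalanced". Since the eigenvalues of $A\otimes R_\theta$-type products come from those of $P(A)$ multiplied by $e^{\pm i\phi}$, real spectrum for all $\theta$ forces a structure. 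Alternatively, combine with the fact that real eigenvalues for all substitutions together with continuity lets us examine $\tr P(A)^2$ versus sign patterns. I would first try the tensor-with-rotation trick to turn a nonreal eigenvalue into a negative trace contribution, reusing the construction from the trace case.
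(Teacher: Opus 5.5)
\textbf{There is a genuine gap, and it sits exactly where the difficulty lies.} The easy implications are fine. Your tensor-with-rotation observation also correctly disposes of unbalanced words, and odd length is trivial. But the core of $(4)\Rightarrow(1)$ is missing, because the lemma you propose is false.

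\textbf{Counterexample to your lemma.} You claim that for non-symmetric $P$ every closed length-$k$ walk on the labelled $k$-cycle traverses each edge exactly once. But folds onto an arc shorter than $k/2$ can exist. Take $P=XYZZ^TY^TYY^TX^T$.
\begin{itemize}
\item It is not symmetric. The unique $X$ must face the unique $X^T$, which forces the reflection axis between positions $8$ and $1$. Then position $6$ would have to be $Z^T$, but it is $Y$.
\item Yet the walk $v_1v_2v_3v_4v_3v_2v_3v_2v_1$ reads $P$. A direct matrix computation gives $\tr P = x_1x_2\cdots x_8 + x_1^2x_2^4x_3^2$.
\end{itemize}

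\textbf{Your fallback does not repair this.} All non-multilinear monomials are even with positive coefficients. So the fact that the coefficient of $\prod x_e$ is nonzero and cannot cancel does not imply that $c\prod x_e+S(x)$ ever goes negative; compare $x^2+y^2+2xy\ge 0$. In the example, every choice of weights $\pm1$ gives trace $0$ or $2$. Making the negative weight large fails if it sits on a folded edge: $x_1=-t$ gives $t^2-t>0$.

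\textbf{What would be needed.} For every balanced, even-length, non-symmetric word you need a separation statement. For instance: $(1,\dots,1)$ lies outside the convex hull of the exponent vectors of the folding walks, or some other argument controlling these terms. You give no argument for this, and it is the whole problem.

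\textbf{How the paper avoids it.} It replaces each letter by a rigid random gadget, so that trace nonnegativity makes the glued graph $G$ positive. It then uses the walk-tree lemma of Camarena et al.\ to pass to ``sun'' subgraphs, which are refuted by an explicit rotation-matrix weighting. Only reflection-symmetric or periodic words survive, and the periodic ones are killed by explicit $2\times 2$ matrices.

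\textbf{The $(2)\Rightarrow(1)$ plan is also incomplete.} The rotation trick only detects the imbalance $\phi=\sum_i\theta_i\,(n_{X_i}-n_{X_i^T})$. For balanced words, including non-symmetric ones such as $XX^TYY^TZZ^T$, it gives $\phi=0$ and no information. ``Reusing the trace construction'' is not a reduction, since a non-real eigenvalue of $P(A)$ says nothing about the sign of $\tr P(A)$.

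The missing idea is simple:
\begin{enumerate}
\item A real spectrum gives $\tr\bigl(P(A)^2\bigr)=\sum_i\lambda_i^2\ge 0$, so $P^2$ is trace nonnegative.
\item The trace result then gives a reflection symmetry of the cyclic word $P^2$, and this descends to $P$.
\item Without symmetric variables, the descended reflection $j\mapsto m-j \pmod k$ cannot fix a letter. This rules out odd $k$ and forces $m$ odd, which yields $P$ cyclically equal to $LL^T$.
\end{enumerate}
This is what the paper implements through the doubled graph $G^2$ and the identity $\hom(G^2,H)=\tr(U^2)$.
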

This theorem reveals a surprising collapse of the standard positivity hierarchy. Generally, possessing a nonnegative trace is a strictly weaker condition than possessing a nonnegative spectrum. However, Theorem \ref{thm:realreal} demonstrates that for individual matrix words over arbitrary matrices, these analytic properties are equivalent, and both are rigidly governed by the exact combinatorial spelling of the word.

The forward implications are straightforward algebraically: if a cyclic permutation of $P$ can be written as $LL^T$, then the spectrum of $P$ exactly coincides with that of the positive semidefinite matrix $LL^T$. This immediately guarantees that all eigenvalues of $P$ are real and nonnegative, yielding the equivalence Item 1 $\iff$ Item 3 as a direct corollary of Item 1 $\iff$ Item 2.
Conversely, the equivalence Item 1 $\iff$ Item 4 is far from an obvious corollary of the spectral results. Proving that mere trace nonnegativity forces the word to be a cyclic square requires bypassing standard algebraic certificates entirely. We derive this tracial characterization as a direct consequence of a stronger result, Theorem \ref{thm:realsym2}, which we prove later.

Ultimately, Theorem \ref{thm:realreal} shows that, in the pure monomial setting, both spectral and trace positivity exhibit an exact rigidity phenomenon analogous to Hermitian-square representations. However, we emphasize that this absolute atomic rigidity is highly fragile under addition; it is known to fail in the trace-positive setting for sums of words. For instance, there exist trace-positive noncommutative polynomials that are not cyclically equivalent to sums of Hermitian squares and commutators (see, e.g., \cite{Quar15}).

\subsection{Characterizations for Substitutions with Symmetry Restrictions}
These results can be generalized further by considering matrix words where specific variables are restricted to be symmetric matrices. Recall Fact~\ref{Fact:AAt}~\ref{itm:possemi}, which states that if $A,B\in\mathbb R^{n\times n}$ are symmetric, and $A$ or $B$ is positive semidefinite, then all eigenvalues of $AB$ are real. Consequently, the word $X_1 X_2 X_2^T$ always evaluates to a matrix with real eigenvalues provided $X_1$ is symmetric, with no restriction on $X_2$. This leads to further natural structural questions: for instance, does the word $X_1 X_2^T X_3 X_2 X_3$ always have real eigenvalues if we require $X_1$ and $X_3$ to be symmetric, but place no restrictions on $X_2$?

The following theorem provides a complete structural classification for this setting. It demonstrates that, up to cyclic permutation, the  mechanism in Fact~\ref{Fact:AAt} is not merely a sufficient condition; it is the \emph{only} algebraic mechanism that forces real eigenvalues when introducing symmetric variables. Here, if we say that a matrix word $P = \prod_{i=1}^k X_{\iota(i)}^{\tau(i)}$ with $\ell$ variables \emph{allows symmetric variables}, then we have that $\tau:[k]\rightarrow \{1,T,\sym\}$ and for each $i\in[\ell]$ we set
$$\mathcal{X}_i=\begin{cases}\bigcup_{n\in\mathbb N}\{A\in\mathbb R^{n\times n}\mid A\textrm{ symmetric }\},\textrm{ if }\iota(j)=i\textrm{ and }\tau(j)=\sym\textrm{ for some }j\in[k]\\ \bigcup_{n\in\mathbb N}\mathbb R^{n\times n},\textrm{ otherwise}\end{cases}$$
when considering properties from Definition~\ref{def:psd}.
To maintain unique syntactic factorization, we assume without loss of generality that if a variable $X_i$ is restricted to symmetric matrices, all occurrences of $X_i$ and $X_i^T$ in $P$ are formally replaced by the single symbol $X_i^\sym$. Furthermore, we extend the formal involution rules to include $(X_i^\sym)^T = X_i^\sym$.

\begin{thm}\label{thm:realsym}
Let $P$ be a matrix word allowing symmetric variables. Then $P$ is real-eigenvalued if and only if:
\begin{enumerate}
\item $P$ is symmetric (i.e., $P\eqshift LL^T$, where $L$ is a matrix word), or \label{itm:realsymsym}
\item there exist $j \in [\ell]$ and a matrix word $L$ such that $P\eqshift LL^TX_{j}^\sym$. \label{itm:realsymsymplusvar}
\end{enumerate}
\end{thm}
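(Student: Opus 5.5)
The sufficiency direction is short, so the plan is to dispose of it first. If $P\eqshift LL^T$, then by Fact~\ref{Fact:ABBA} the spectrum of $P$ equals that of the positive semidefinite matrix $LL^T$, so all eigenvalues are real. If $P\eqshift LL^TX_j^\sym$, then on substitution $LL^T$ is positive semidefinite and $X_j^\sym$ is symmetric. Fact~\ref{Fact:AAt}~\ref{itm:possemi} then makes the eigenvalues of the product real, and cyclic invariance transfers this to $P$.

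For necessity I would argue by contraposition. Suppose $P$ has neither form; I want a substitution giving a non-real eigenvalue. The approach is to move to the trace via the graph-homomorphism encoding that the paper announces in the introduction. Write $\tr P(A_1,\dots,A_\ell)$ as a sum over closed walks $i_0\to i_1\to\cdots\to i_k=i_0$, where step $t$ carries the weight $(A_{\iota(t)})_{i_{t-1}i_t}$, or its reversal for a transposed letter. A symmetric variable corresponds to an undirected edge class, while an arbitrary variable corresponds to a directed one.

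Real eigenvalues of $M=P(A)$ for every substitution imply that $\tr(M^m)$ is a power sum of real numbers, so $\tr(M^{2m})\ge 0$ for all $m$. The same holds for $\tr((cI+M)^{2m})$. Moreover, $M$ is a word evaluation, so $M^{2}$ is the evaluation of the word $P^2$. It therefore suffices to show:
\begin{quote}
if $P$ has neither form, then some even power $P^{2m}$, or a suitable substitution-derived word, fails to be trace nonnegative.
\end{quote}

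A cleaner alternative is to find a direct substitution such as block permutation-like matrices. Assign to each variable a sparse matrix built from the cyclic word structure: take a cycle graph of length $k$ with labeled edges, fold it by the quotient identifying occurrences of the same variable, and let each $A_i$ be the adjacency pattern with generic weights. Then $P(A)$ acts like a weighted shift around this quotient, and its eigenvalues are roughly the $k'$-th roots of a product of weights. Non-real roots appear unless the walk closes up in a way that forces a palindromic reflection. Such a reflection is exactly an axis of the cyclic word mapping each letter to its transpose. An axis through two edge-midpoints corresponds to $LL^T$. An axis through letters must fix a letter equal to its own transpose, i.e.\ a symmetric variable; with one fixed letter this gives $LL^TX_j^\sym$, and with two fixed letters it gives $X_i^\sym L X_j^\sym L^T$. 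That last form is not covered by the theorem and must be killed separately, as the example $X_1^\sym X_2^\sym$ shows.

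So the key steps are, in order. First, prove the combinatorial lemma: a cyclic word $P$ with $P^T\eqshift P$ has a reflection axis, and the axis type determines one of three shapes. These shapes are $LL^T$, $LL^TX_j^\sym$, and $X_i^\sym LX_j^\sym L^T$. Second, handle the case $P^T\not\eqshift P$. Here I would use the stronger trace statement the paper promises, Theorem~\ref{thm:realsym2}, applied to a suitable even power, or construct a generic shift substitution whose spectrum is not closed under the symmetry forced by reality. Third, handle the two-axis-letter case $X_i^\sym L X_j^\sym L^T$ explicitly. Substituting $L$ by an identity-like block reduces it to $X_i^\sym X_j^\sym$ composed with a similarity, and two symmetric matrices can have a product with non-real eigenvalues, e.g.\ $\begin{pmatrix}1&0\\0&-1\end{pmatrix}\begin{pmatrix}0&1\\1&0\end{pmatrix}$. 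The main obstacle is the second step. There one must show that absence of any reflection symmetry yields a genuine non-real eigenvalue for some finite substitution, not just a failure of some identity. I would try the generic folded-cycle construction and a perturbation argument from a substitution with an eigenvalue of multiplicity pattern that symmetry would forbid.
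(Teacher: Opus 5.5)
\textbf{There is a genuine gap: the necessity direction is not proved.} Your sufficiency argument is fine. Reducing necessity to trace nonnegativity of $P^2$ via $\tr(P(A)^2)=\sum_i\lambda_i^2\ge 0$ is also what the paper does: its graph $G^2(H_1,\dots,H_k)$ is the graph of the word $P^2$, and it is positive because $\hom(G^2,H)=\tr(U^2)$. The problem is that you give no argument for the only hard step, namely that a word not of the required form has a substitution making this trace negative.

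Your first fallback, citing Theorem~\ref{thm:realsym2}, is not circular, but it only outsources the difficulty, since that theorem is proved by the very same Lemma~\ref{lem:auxgraphsym}. If you do cite it, your Steps~1 and~3 become superfluous. The relation $P^2\eqshift QQ^T$ gives a letter-reflection $x\mapsto 2j+1-x$ modulo $2k$, which descends modulo $k$ by $k$-periodicity. When $k$ is even it fixes no letter, so $P\eqshift LL^T$. When $k$ is odd it fixes exactly one letter, which must be some $X_j^\sym$, so $P\eqshift LL^TX_j^\sym$.

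Your second fallback, the generic folded-cycle substitution, fails as described. For a word with distinct non-symmetric letters, setting $A_{\iota(t)}=w_tE_{t-1,t}$ (transposed where $\tau(t)=T$) gives $P(A)=(\prod_t w_t)E_{0,0}$. This is a rank-one matrix with real spectrum, although e.g.\ $X_1X_2$ is not real-eigenvalued. Once letters repeat, the trace becomes a weighted sum over all color- and orientation-respecting closed walks, including rotations, reflections, and foldings through $XX^T$ or symmetric letters. Nothing in your sketch controls these contributions.

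Controlling exactly these spurious walks is the missing idea. The paper proceeds as follows:
\begin{enumerate}
\item It replaces each letter by a rigid random edge-rooted graph with strongly distinguishing degree data (Proposition~\ref{prop:rigidfamily}).
\item It uses the walk-tree partition lemma (Lemma~\ref{lem:indwalktrcl}) to pass from the positive glued graph to induced ``sun'' subgraphs.
\item It shows these are not positive via a rotation-matrix weighting (Lemma~\ref{lem:sunnotpos}), unless the cyclic word has a reflection or is periodic of one of a few special types.
\item It kills the residual periodic cases with explicit $2\times 2$ rotation/reflection matrices and with the substitution $X_i=Y_iY_i^T$.
\end{enumerate}

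\textbf{Step~3 is also wrong as stated.} You cannot set $L$ to the identity when $L$ involves $X_i$ or $X_j$. Moreover, the shape $X_i^\sym L X_j^\sym L^T$ cannot be excluded unconditionally. The word $(X_1^\sym)^4$ has this shape with $L=X_1^\sym$, yet it is real-eigenvalued, since it is also $\eqshift LL^T$. The word $(X_1^\sym X_2^\sym)^2$ has this shape with $L=X_2^\sym$. Setting $X_2=I$ leaves $(X_1^\sym)^2$, which has nonnegative spectrum, even though the word is not real-eigenvalued. Any argument here must use the absence of another axis of type $LL^T$ or $LL^TX_j^\sym$. It also needs a genuinely rotational substitution, e.g.\ $A=\mathrm{diag}(1,-1)$ and a reflection $B$ with $AB$ a rotation by a suitable angle, as the paper uses.
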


Note that three canonical examples that guarantee real eigenvalues naturally fit into the two cases above: $X_1X_2 (X_1X_2)^T$ falls into Case \ref{itm:realsymsym}; $X_1^\sym X_2 X_2^T$ falls into Case \ref{itm:realsymsymplusvar}; and the power $(X_1^\sym)^k$ falls into Case \ref{itm:realsymsym} if $k$ is even, and Case \ref{itm:realsymsymplusvar} if $k$ is odd.
In the context of nonnegative spectra, the classification simplifies even further, eliminating the second case entirely.

\begin{coro}\label{cor:sympossemidef}
A matrix word $P$ allowing symmetric variables is spectrally nonnegative if and only if $P$ is symmetric.
\end{coro}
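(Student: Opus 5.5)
The proof will lean on Theorem~\ref{thm:realsym}. The backward direction is the easy one. If $P\eqshift LL^T$, then by Fact~\ref{Fact:ABBA} the evaluation of $P$ has the same eigenvalues as $L(A)L(A)^T$. That matrix is positive semidefinite by Fact~\ref{Fact:AAt}, so all eigenvalues of $P$ are real and nonnegative.

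For the forward direction, assume $P$ is spectrally nonnegative. Then $P$ is in particular real-eigenvalued, so by Theorem~\ref{thm:realsym} either $P$ is symmetric, and we are done, or $P\eqshift LL^TX_j^\sym$ for some $j$ and some word $L$. It remains to rule out the second case when $P$ is not also symmetric. The idea is to exhibit a substitution for which $LL^TX_j^\sym$ has a negative eigenvalue. By cyclic invariance (Fact~\ref{Fact:ABBA}) we may work with the word $LL^TX_j^\sym$ itself.

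\textbf{Key steps in the second case.}
\begin{enumerate}
\item \emph{Variables other than $X_j$.} Substitute every such variable with an identity matrix $I_n$ (symmetric variables included). Then $L$ becomes a word purely in $X_j^\sym$, so $L\mapsto S^a$, where $S$ is the matrix substituted for $X_j$ and $a$ is the number of occurrences of $X_j^\sym$ in $L$. This gives
\[
LL^TX_j^\sym \mapsto S^{2a+1}.
\]
\item \emph{The variable $X_j$.} Take $n=1$ and $S=(-1)$. Then $S^{2a+1}=-1<0$. This is a legitimate symmetric $1\times 1$ substitution, so it contradicts spectral nonnegativity.
\end{enumerate}

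The same argument shows that a word in the second case of Theorem~\ref{thm:realsym} is not spectrally nonnegative. This is consistent because a word containing a symmetric letter an odd number of times cannot be cyclically equivalent to $MM^T$. So the two cases are genuinely distinct whenever this parity argument applies.

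The only point needing care is that substituting identities for non-symmetric variables is allowed: identities are permitted both for arbitrary and for symmetric classes. Beyond that, the corollary reduces entirely to Theorem~\ref{thm:realsym}, and I expect no real obstacle.
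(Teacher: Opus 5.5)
Your proposal is correct and follows the paper's proof essentially step for step. The easy direction uses Facts~\ref{Fact:AAt} and~\ref{Fact:ABBA}, and the hard direction applies Theorem~\ref{thm:realsym}, then rules out the case $P\eqshift LL^TX_j^\sym$ with the $1\times 1$ substitution $X_i=(1)$ for $i\neq j$ and $X_j=(-1)$, which makes the word evaluate to $(-1)$.
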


Finally, we establish the  tracial result promised during our discussion of arbitrary matrices (Theorem \ref{thm:realreal}). Even when symmetric variables are introduced, the structural rigidity  remains: trace nonnegativity alone is sufficient to force the word into a symmetric square.
\begin{thm}\label{thm:realsym2}
Let $P$ be a matrix word allowing symmetric variables. Then $P$ is trace nonnegative if and only if $P$ is symmetric.
\end{thm}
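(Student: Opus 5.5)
The easy direction is immediate. If $P\eqshift LL^T$, then by cyclic invariance of the trace (Fact~\ref{Fact:ABBA}) we have $\tr P(A)=\tr(L(A)L(A)^T)=\|L(A)\|_F^2\ge 0$. This holds whether or not some variables are symmetric, since $(X_i^\sym)^T=X_i^\sym$ is respected by every evaluation.

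For the converse, the plan is to go through the graph-homomorphism reformulation mentioned in the introduction. Expand
\[
\tr P(A_1,\dots,A_\ell)=\sum_{v_1,\dots,v_k\in[n]}\prod_{i=1}^k \bigl(A_{\iota(i)}^{\tau(i)}\bigr)_{v_iv_{i+1}} .
\]
This is a weighted count of closed walks, i.e. of homomorphisms of a cycle $C_k$ whose edges are labelled by variable and orientation. A transposed letter is an edge traversed backwards, and a symmetric letter is an undirected edge. The key step is to choose the substitution matrices as a \emph{generic random} construction that separates the patterns in which positions of the word are identified. Concretely, I would take $n$ large and the entries of each nonsymmetric $A_i$ i.i.d.\ centered Gaussians, with symmetric $A_j$ drawn as GOE matrices. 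One could also use a Rademacher/tensor-power construction in the style of the proofs for the positive graph conjecture on trees.

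With this choice, consider the expected trace $\Exp\,\tr P(A)$. By Wick's formula it is a sum over pairings of the $k$ letters such that paired letters carry the same variable. Each pairing contributes the number of vertex identifications it forces, which is a power of $n$. The leading order in $n$ comes from \emph{noncrossing} pairings in which each paired couple is ``transpose-consistent'', meaning the pair is traversed as $X_i\cdots X_i^T$ in the appropriate orientation.

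Expectation alone does not decide the sign, since it is a positive sum. So instead I would use a signed perturbation. Replace each nonsymmetric $A_i$ by $D_iA_i$, where the $D_i$ are independent random diagonal sign matrices, or more simply multiply the entries by independent random signs. I would then show that $\tr P$ has a leading term that survives averaging only if the word admits a perfect \emph{noncrossing transpose-pairing}. Otherwise the trace is a mean-zero quantity of nondegenerate variance, and hence is negative with positive probability.

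The combinatorial heart, and the main obstacle, is showing that the words admitting a leading-order nonvanishing positive term are exactly the cyclic squares $LL^T$. A noncrossing perfect pairing of a cyclic word in which each pair is $(X_i^{\tau},X_i^{\tau^T})$ describes a tree (a plane tree traversed by its contour walk). Such a walk is generally \emph{not} of the form $LL^T$; for example $X_1X_1^TX_2X_2^T$ is a contour walk but not symmetric. So the Gaussian first moment is insufficient, and I need to kill these tree-like words too. For this I would not use centered variables, but matrices of the form $A_i=c_iJ+\varepsilon G_i$ or, better, low-rank plus sign structure, making the trace a polynomial in scalar parameters. Take $n=1$ blocks tensored with small random perturbations and apply the positive-graph argument: for a non-symmetric tree-like word, find a weight assignment on the corresponding auxiliary labelled graph (the quotient of $C_k$ by the pairing) for which the homomorphism count is negative.

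I expect to prove this by induction on $k$. If the walk has a leaf pair $X_iX_i^T$ adjacent in the cyclic word, assign $A_i$ a near-isometry so that the pair collapses to approximately the identity, reducing to a shorter word. If the reduced word is non-symmetric, it has a negative-trace witness by induction, and a continuity argument lifts that witness. The base cases are words with no adjacent cancelling pair, which must be handled directly by the Gaussian/sign argument above. The real difficulty is the case where every reduction yields a symmetric word while the original word is not. Here I would use the symmetric letters $X_j^\sym$ and choose $A_j=\mathrm{diag}(1,-1)\otimes I$-type indefinite matrices that break the square structure.
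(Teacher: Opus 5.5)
Your easy direction is fine, but the converse has a genuine gap: none of your mechanisms actually produces a negative trace for a general non-symmetric word.

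\textbf{The Gaussian step.} By Wick's formula, $\Exp\,\tr P(A)$ for i.i.d.\ Gaussian/GOE substitutions is a sum of nonnegative terms. It is strictly positive as soon as every variable occurs an even number of times, since the diagonal index choice $v_1=\dots=v_k$ already contributes positively. That is exactly the case not already settled by $A_i\mapsto -A_i$. Your ``signed perturbation'' ($A_i\mapsto D_iA_i$, or random entrywise signs) leaves the Gaussian law invariant, so it cannot make the trace mean-zero. Hence the claim ``otherwise the trace is mean-zero with nondegenerate variance'' is false in the only relevant case. The base case (words with no adjacent cancelling pair), which you delegate to this argument, is therefore unproved.

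Your motivating example is also wrong: $X_1X_1^TX_2X_2^T\eqshift (X_1^TX_2)(X_1^TX_2)^T$ is symmetric. A genuinely non-symmetric contour word is $X_1X_1^TX_2X_2^TX_3X_3^T$.

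\textbf{The induction.} It does not close either. The same matrix is substituted at every occurrence of $X_i$. Making $A_i$ (nearly) orthogonal to collapse one pair $X_iX_i^T$ therefore forces all other occurrences of $X_i$ to be (nearly) orthogonal, so a witness for the reduced word, which needs $A_i$ arbitrary there, does not lift. A scalar-type choice of $A_i$ instead deletes \emph{all} occurrences of $X_i$.

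Even when a collapse is legitimate, every reduction can be symmetric while $P$ is not. Take $X_1X_1^TX_2X_2^TX_3X_3^T$: setting any $A_i=I$ leaves $X_jX_j^TX_kX_k^T$, which is symmetric. This word contains no symmetric letter, so your $\mathrm{diag}(1,-1)\otimes I$ remedy does not apply. Finally, ``find a weight assignment on the quotient graph with negative homomorphism count'' merely restates the problem, which is essentially an instance of the open positive graph conjecture.

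\textbf{What is missing.} You need a uniform reduction to a situation where negativity can be exhibited explicitly. The paper does this as follows.
\begin{enumerate}
\item It glues rigid random gadgets $F_i$ (or $F_i^\sym$) along a cycle to form $G$, so that $\hom(G,H)=\tr P(S_1,\dots,S_\ell)$ for every edge-weighted $H$. Here $S_i$ are the rooted $F_i$-homomorphism matrices. Trace nonnegativity therefore makes $G$ a positive graph.
\item The rigidity and degree properties of the gadgets make the walk-tree partition of $G$ reproduce the letter structure of $P$.
\item The lemma of Camarena, Cs\'oka, Hubai, Lippner and Lov\'asz says that unions of walk-tree classes induce positive subgraphs. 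Combined with the explicit non-positivity of sun graphs, this forces a reflection of the cycle, i.e.\ $P\eqshift LL^T$.
\item The exceptions are periodic words. Those of period $1$ or $2$ are killed by explicit $2\times 2$ rotation/reflection matrices. All-symmetric periodic words are reduced via $X_i^\sym=Y_iY_i^T$ to the already settled non-symmetric case.
\end{enumerate}
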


\subsection{Characterizations for Positive Semidefinite Matrix Substitutions}

As an application of our main theorems, we now fully characterize when a matrix word is spectrally nonnegative or trace nonnegative over all positive semidefinite matrix substitutions.
Crucially, we demonstrate that the answers to Questions \ref{q2} and \ref{q3} emerge as direct consequences of this characterization. Although originally Lieb and Pierce and sequentially Hillar and Johnson  formulated those questions in terms of strict positivity over strictly positive definite matrices, we show that resolving the broader positive semidefinite boundary case naturally and completely resolves their strict interior conjectures.

To formalize this, we adopt the combinatorial terminology used in \cite{HillarJohnson, ArmHillar}. We say that a matrix word $P$ is \emph{palindromic} if reversing the sequence of its letters yields the identical word. Furthermore, $P$ is called \emph{nearly symmetric} if, after a cyclic permutation, it can be written as the product of two palindromes. If we say that a matrix word $P$ with $\ell$ variables has one of the properties from Definition~\ref{def:psd} \emph{over real symmetric positive semidefinite matrices}, we mean that $\mathcal{X}_i=\bigcup_{n\in\mathbb N}\{A\in\mathbb R^{n\times n}\mid A\textrm{ symmetric positive semidefinite }\}$ for every $i\in[\ell]$.

\begin{thm}\label{thm:HilJoh}
Let $P$ be a matrix word in symmetric variables. The following are equivalent:
\begin{enumerate}
\item $P$ is spectrally nonnegative over real symmetric positive semidefinite matrices.
\item $P$ is trace nonnegative over real symmetric positive semidefinite matrices.
\item $P$ is nearly symmetric.
\end{enumerate}
\end{thm}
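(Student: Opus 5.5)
The plan is to reduce Theorem~\ref{thm:HilJoh} to Theorem~\ref{thm:realsym2} and Corollary~\ref{cor:sympossemidef} by the substitution $X_i = Y_iY_i^T$ with $Y_i$ an unrestricted variable. By Fact~\ref{Fact:AAt}~\ref{itm:aat}, positive semidefinite substitutions for $X_i$ are exactly the evaluations $Y_iY_i^T$ with $Y_i$ arbitrary. Hence, if $\widehat P$ denotes the word in the $Y_i$ obtained from $P$ by replacing every letter $X_i$ with $Y_iY_i^T$, then $P$ is spectrally (resp.\ trace) nonnegative over real symmetric positive semidefinite matrices if and only if $\widehat P$ is spectrally (resp.\ trace) nonnegative over arbitrary real square matrices. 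Theorem~\ref{thm:realreal} then gives that both properties are equivalent to $\widehat P$ being symmetric in the sense of Definition~\ref{def:symmetric}. This already proves $(1)\iff(2)$. It remains to show the purely combinatorial statement: $\widehat P$ is cyclically equivalent to some $LL^T$ if and only if $P$ is nearly symmetric.

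For the direction ``nearly symmetric $\Rightarrow$ symmetric'', suppose $P\eqshift Q_1Q_2$ with $Q_1,Q_2$ palindromes in the symmetric letters. There are three cases. If $Q_1$ has even length, write $Q_1 = RR^{\mathrm{rev}}$. Then $\widehat{Q_1} = \widehat R\,\widehat R^T$, since reversal of a word in symmetric letters corresponds to formal transpose after substitution. If $Q_1$ has odd length, write $Q_1 = R X_i R^{\mathrm{rev}}$. Then $\widehat{Q_1} = (\widehat R Y_i)(\widehat R Y_i)^T$. The same holds for $Q_2$, so $\widehat P \eqshift M_1M_1^TM_2M_2^T$. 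A cyclic shift turns this into $(M_1^TM_2)(M_1^TM_2)^T$, so $\widehat P$ is symmetric.

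For the converse, suppose $\widehat P \eqshift LL^T$. Then $\widehat P$, read cyclically, has an ``axis'': a cut point at which position $j$ and its mirror image across that point carry transposed letters. A second axis sits at the diametrically opposite cut. The word $\widehat P$ is built from blocks $Y_iY_i^T$, and a letter $Y_i$ can only be mirrored to a letter $Y_i^T$ and vice versa. From this I would argue that each axis falls either between two blocks or exactly in the middle of a block $Y_iY_i^T$. An axis cannot fall between $Y_i^T$ and the next block, because that would pair $Y_i^T$ with $Y_{i'}$ in the wrong orientation of the block structure. Reading off the reflection on the level of blocks, $P$ is then cyclically invariant under a reflection whose two axes each pass either between letters or through a letter. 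This is exactly the statement that a cyclic shift of $P$ splits as a product of two palindromes, with each axis giving the center of one palindrome.

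The main obstacle is making this converse parsing argument rigorous, namely showing that a reflection of the cyclic word $\widehat P$ descends to a reflection of the cyclic word $P$. I would handle it by indexing the positions of $\widehat P$ as $\mathbb Z_{2k}$, with $Y$ at even positions and $Y^T$ at odd positions. A reflection $j\mapsto c-j$ must map even positions to odd positions, so $c$ is odd. This yields the induced reflection $b\mapsto c'-b$ on block indices in $\mathbb Z_k$, and the palindromic decomposition of $P$ follows.
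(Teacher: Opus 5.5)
Your proposal is correct and follows essentially the paper's route: substitute $X_i=Y_iY_i^T$, apply Theorem~\ref{thm:realreal} to $\widehat P$, and show that the reflection of the cyclic word $\widehat P$ (whose axis ends lie either between blocks or in the middle of a block $Y_iY_i^T$) descends to a reflection of the cyclic word $P$, which is the same as a splitting of $P$ into two palindromes. Your converse goes through $\widehat P\eqshift (M_1^TM_2)(M_1^TM_2)^T$ rather than the paper's direct appeal to Fact~\ref{Fact:AAt}~\ref{itm:possemi}, which is equally valid; the one slip is your sentence claiming that an axis cannot fall between $Y_i^T$ and the next block, which is false (this is exactly the between-blocks case, it only forces the two adjacent blocks to carry the same variable, and for $P=X_1X_1X_2X_2$ it is the only kind of axis), but it is harmless because your $\mathbb Z_{2k}$ parity formalization with $c$ odd correctly admits both kinds of axes.
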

The exact same combinatorial characterization applies when nonnegativity is replaced with strict positivity.

\begin{thm}\label{thm:HilJoh2}
Let $P$ be a matrix word in symmetric variables. The following are equivalent:
\begin{enumerate}
\item $P$ evaluates to a matrix with strictly positive eigenvalues over all real symmetric positive definite matrices.
\item $P$ has a strictly positive trace over all real symmetric positive definite matrices.
\item $P$ is nearly symmetric.
\end{enumerate}
\end{thm}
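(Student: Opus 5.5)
Our plan is to reduce Theorem~\ref{thm:HilJoh2} to Theorem~\ref{thm:HilJoh} by a perturbation/continuity argument in one direction and by positive definite substitutions in the other. We prove the cycle (3)$\Rightarrow$(1)$\Rightarrow$(2)$\Rightarrow$(3).

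\textbf{(3)$\Rightarrow$(1).} Suppose $P\eqshift Q_1Q_2$ with $Q_1,Q_2$ palindromes in symmetric variables. Evaluated on symmetric matrices, a palindrome gives a symmetric matrix. For an odd-length palindrome $X_{i_1}\cdots X_{i_m}\cdots X_{i_1}$ with symmetric positive definite entries, write it as $M^TA M$ with $M=X_{i_{m+1}}\cdots X_{i_1}$ invertible and $A\succ 0$; this is positive definite by congruence. An even-length palindrome is $M^TM$ with $M$ invertible, so it is also positive definite. Hence $P$ is cyclically equivalent to a product $S_1S_2$ of two positive definite matrices. Then $S_1S_2$ is similar to $S_1^{1/2}S_2S_1^{1/2}\succ0$, so by Fact~\ref{Fact:ABBA} all eigenvalues of $P$ are strictly positive. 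The degenerate cases where one palindrome is empty are covered trivially.

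\textbf{(1)$\Rightarrow$(2).} This is immediate: the eigenvalues are real and strictly positive, so their sum, the trace, is strictly positive.

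\textbf{(2)$\Rightarrow$(3).} Assume $\tr P>0$ for all positive definite substitutions. Every positive semidefinite tuple is a limit of positive definite tuples, for instance $A_i+\eps I$. The trace of the word is a polynomial, hence continuous, in the entries, so $\tr P\ge 0$ on all real symmetric positive semidefinite substitutions. Thus $P$ is trace nonnegative over positive semidefinite matrices, and Theorem~\ref{thm:HilJoh}, (2)$\Rightarrow$(3), gives that $P$ is nearly symmetric.

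The only step needing care is the strictness in (3)$\Rightarrow$(1), namely checking that invertibility makes each palindrome evaluation \emph{strictly} definite. The congruence argument handles this, so there is no real obstacle: the substantive combinatorial content is carried entirely by Theorem~\ref{thm:HilJoh}, which we take as given.
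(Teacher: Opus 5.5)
Your proposal is correct and follows the paper's proof in substance. Strictness in (3)$\Rightarrow$(1) comes from invertibility of the positive definite substitutions: you make this explicit via congruence and the similarity $S_1S_2\sim S_1^{1/2}S_2S_1^{1/2}$, whereas the paper argues ``nonnegative spectrum plus invertible''. The converse is the same closure argument reducing to Theorem~\ref{thm:HilJoh}, and your ordering (1)$\Rightarrow$(2)$\Rightarrow$(3) only needs continuity of the trace, not of eigenvalues. One trivial slip: for the odd palindrome the conjugating factor should be $M=X_{i_{m-1}}\cdots X_{i_1}$, not $X_{i_{m+1}}\cdots X_{i_1}$.
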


If we remove the symmetry restriction on the substituents and only require that each input matrix $X_i$ possesses a nonnegative spectrum, the ``nearly symmetric" characterization no longer works. We obtain the following theorem, demonstrating an even stricter combinatorial rigidity.

\begin{thm}\label{thm:HilJohno-sym}
Let $P$ be a matrix word. The following are equivalent:
\begin{enumerate}
\item $P$ is spectrally nonnegative (resp. evaluates to a matrix with strictly positive eigenvalues) over all real matrices with nonnegative (resp. strictly positive) spectra.\label{itm:HJsym_possp}
\item $P$ is trace nonnegative (resp. has a strictly positive trace) over all real matrices with nonnegative (resp. strictly positive) spectra.\label{itm:HJsym_postr}
\item $P$ is symmetric, or $P = (X_j^t)^k$ for some variable $X_j$ and $t\in\{1,T\}$.\label{itm:HJsym_sympower}
\end{enumerate}
\end{thm}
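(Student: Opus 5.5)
The implications \ref{itm:HJsym_sympower} $\Rightarrow$ \ref{itm:HJsym_possp} $\Rightarrow$ \ref{itm:HJsym_postr} are the easy direction, so the plan concentrates on \ref{itm:HJsym_postr} $\Rightarrow$ \ref{itm:HJsym_sympower}. For the easy direction: if $P \eqshift LL^T$, Fact~\ref{Fact:ABBA} shows that $P$ has the spectrum of the positive semidefinite matrix $LL^T$, so all eigenvalues are nonnegative. In the strict case all inputs are invertible, since their spectra are strictly positive, so $LL^T$ is positive definite. If $P=(X_j^t)^k$, the eigenvalues are the $k$-th powers of those of $A_j$ or $A_j^T$, which have the same spectrum, so they are nonnegative (resp.\ positive). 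Finally, a nonnegative (positive) spectrum gives a nonnegative (positive) trace.

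For the converse I would reduce to Theorem~\ref{thm:realreal}, item~\ref{itm:realrealtrace}, by a substitution. Suppose $P$ is trace nonnegative over matrices with nonnegative spectra, and suppose first that at least two distinct variables occur in $P$. The key observation is that strictly upper triangular plus positive-diagonal matrices are rich enough. Given arbitrary real matrices $B_1,\dots,B_\ell$ of size $n$, I would substitute block matrices of size $2n$ or $3n$ whose spectrum is $\{0\}$ (or $\{\varepsilon\}$ after adding $\varepsilon I$). Concretely, fix a cyclic chain structure of nilpotent blocks indexed by variables. Take $A_i = E_{i} \otimes B_i$, where the $E_i$ are nilpotent $0/1$ matrices, i.e.\ adjacency matrices of acyclic digraphs. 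These should be chosen so that the trace of any word $Q$ of the same length equals $\tr$ of the $B$-evaluation when $Q$ has the same letter-pattern as $P$, and vanishes or is controlled otherwise.

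A cleaner route is probably a similarity trick. Any real matrix $B$ is a difference $B = C - D$, and more usefully $B = S N S^{-1}$ is not available in general. Instead I would use that $\tr$ is multilinear: write $A_i = \lambda U_i + B_i$ with $U_i$ a nilpotent shift structure on an enlarged space, and extract a coefficient of a polynomial in $\lambda$. Even simpler, $\begin{pmatrix} 0 & B\\ 0 & 0\end{pmatrix}$ is nilpotent. Placing the $n\times n$ blocks on edges of a directed cycle of length $k$ (the length of $P$), with the $j$-th letter occupying edge $j \to j+1$, makes each individual $A_i$ a sum of blocks on several edges. Such an $A_i$ is not nilpotent in general because the edges close a cycle. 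This is the main obstacle.

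My first attempt at the obstacle is to break the cycle using a scalar grading. Substitute $A_i(t)=D_t^{-1}(\text{strictly upper part})D_t + \ldots$. Alternatively, use matrices $I + \varepsilon B_i$, which for small $\varepsilon$ have spectrum near $1$ but not necessarily real. So I would instead take $A_i = M_i M_i^{T}$-free constructions: matrices $c I + B_i^{\mathrm{sym}}$? These lose generality. Hence the plan I commit to is the following. Use the polynomial identity in $\varepsilon$ from $A_i = I+\varepsilon B_i$ only after first showing that trace-nonnegativity must hold for an open set of arbitrary $B$ in which $I+\varepsilon B_i$ has real positive spectrum. Upper-triangularizable inputs suffice by considering $B_i$ simultaneously upper triangular plus a generic perturbation argument on the lowest-order nonvanishing coefficient. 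The sign of the leading $\varepsilon$-coefficient, together with the combinatorial analysis behind Theorem~\ref{thm:realsym2} (the positive-graph style cycle homomorphism counts), should force $P$ to be symmetric unless only one variable appears. In that case $P$ is a word in $X_j, X_j^T$, and a $2\times2$ rotation-free example with $A=\begin{pmatrix}1&s\\0&1\end{pmatrix}$, whose trace of mixed words like $A A^T A$ versus powers grows with sign depending on pattern, distinguishes the pure powers $(X_j^t)^k$ from non-symmetric mixed words.
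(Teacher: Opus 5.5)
Your easy direction is fine, including the invertibility remark in the strict case. The converse, however, is not established, and the plan you finally commit to does not work.

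\begin{itemize}
\item For $A_i=I+\varepsilon B_i$ one has $\tr P(A_1,\dots,A_\ell)=n+O(\varepsilon)$. So for small $\varepsilon$ the trace is positive for every word and every choice of $B$: the lowest-order coefficient is $\tr(I)=n$ and carries no sign information. For large $\varepsilon$, admissibility of $I+\varepsilon B_i$ is essentially the original constraint on $B_i$. Nor can an inequality known on an open set of inputs be propagated to all $B$ by a polynomial-identity argument.
\item Simultaneously upper triangular inputs do not suffice by themselves. For a word without transposes the evaluation is then upper triangular, with diagonal entries that are products of the nonnegative eigenvalues. So e.g.\ $X_1X_2$ has nonnegative trace on all of them, although the third item excludes it. The proposed ``generic perturbation'' would have to stay inside the non-open class of matrices with real nonnegative spectrum, which is precisely the difficulty, and you do not specify it.
\item Your one-variable test is vacuous. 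For $A=\begin{pmatrix}1&s\\0&1\end{pmatrix}$ with $s\ge 0$, both $A$ and $A^T$ are entrywise nonnegative with unit diagonal, so every word in $A,A^T$ has trace at least $2$ (e.g.\ $\tr(A^2A^T)=2+2s^2$). Conjugating by $\mathrm{diag}(1,-1)$ reduces $s<0$ to $-s$.
\item The appeal to ``the combinatorial analysis behind Theorem~\ref{thm:realsym2}'' does not say which graph would be positive, or why.
\item The obstacle you cite for the block construction is misdiagnosed. When at least two variables occur, the block support of each $A_i$ is a proper subset of the cycle's edges, each with a single orientation, so $A_i$ is nilpotent. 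The real work there would be controlling the contributions to $\tr P(A)$ of closed walks other than the intended one, and the one-variable case would remain untouched.
\end{itemize}

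The missing idea is a substitution that stays inside the class of matrices with nonnegative spectrum yet is rich enough to reduce to Theorem~\ref{thm:realreal}. The paper sets $X_i=B_iB_i^TC_iC_i^T$, so $X_i^T=C_iC_i^TB_iB_i^T$. By the second item of Fact~\ref{Fact:AAt} this has nonnegative spectrum. Hence the resulting word $\tilde P$ in the letters $B_i,C_i$ is trace nonnegative over arbitrary real matrices, and Theorem~\ref{thm:realreal} gives $\tilde P\eqshift LL^T$.

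One then reads off $P$ from the reflection axis of the cyclic word $\tilde P$.
\begin{itemize}
\item If the axis cuts inside some $B_iB_i^T$ (or $C_iC_i^T$) block, the alternation $B_1B_1^T\,C_1C_1^T$ is forced all the way around the cycle. So $P$ is a pure power of $X_1$ or of $X_1^T$.
\item Otherwise the axis cannot separate a $B_iB_i^T$ block from a $C_iC_i^T$ block. It therefore lies between whole $X$-blocks and descends to $P\eqshift LL^T$.
\end{itemize}
This handles the one-variable case uniformly, and the strict version follows by continuity via $A+\varepsilon I$.

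The two distinct letters are essential. Substituting $X_i=B_iB_i^T$ alone would only probe symmetric positive semidefinite inputs, on which e.g.\ $X_1X_2$ is already spectrally nonnegative (the nearly symmetric class of Theorem~\ref{thm:HilJoh}).
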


\subsection{Decidability, Complex Generalizations, and Operator Algebras}
Because our algebraic characterization reduces an analytic property to a pure combinatorial string-matching condition, it provides a complete resolution to the corresponding decidability problem for matrix words. This is in stark contrast to the broader polynomial setting. The breakthrough $\text{MIP}^* = \text{RE}$ theorem \cite{JNV+21} implies that there is no general algorithm to decide whether a symmetric noncommutative polynomial evaluates to a nonnegative trace over arbitrary dimensions or within a von Neumann algebra. Despite that formal undecidability for general polynomials, our result guarantees that the exact positivity of an individual matrix word can always be verified in quadratic time.

\begin{coro}\label{cor:decide}
The decision problem of whether a matrix word (including symmetric variables) $P$ is spectrally nonnegative or trace nonnegative is decidable. Moreover, if $P$ has length $k$, this can be decided in time $O(k^2)$.
\end{coro}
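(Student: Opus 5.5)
By Corollary~\ref{cor:sympossemidef} and Theorem~\ref{thm:realsym2}, a matrix word $P$ allowing symmetric variables is spectrally nonnegative if and only if it is trace nonnegative, and both hold if and only if $P$ is symmetric in the sense of Definition~\ref{def:symmetric}. (For words without symmetric variables the same equivalence is Theorem~\ref{thm:realreal}.) The decision problem is therefore the purely combinatorial question: given $P=\prod_{i=1}^k X_{\iota(i)}^{\tau(i)}$ with $\tau(i)\in\{1,T,\sym\}$, does there exist a cyclic shift $j$ such that the shifted word equals $LL^T$? In particular, decidability follows at once, and it remains to bound the running time.

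The algorithm is as follows. If $k$ is odd, reject: a word of the form $LL^T$ has even length, and cyclic shifts preserve length. Otherwise, for each of the $k$ shifts $j\in[k]$, form the rotated word $Q_j=\prod_{i=j+1}^{j+k}X_{\iota(i)}^{\tau(i)}$ with indices modulo $k$. Set $L$ to be its first half, compute the formal transpose $L^T$, and compare the second half of $Q_j$ letter by letter with $L^T$.

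Computing $L^T$ uses the formal involution rules. The $m$-th letter of the second half must equal the transpose of the letter at position $k/2+1-m$ of $L$. Here the transpose of a single letter is given by $X_i\mapsto X_i^T$, $X_i^T\mapsto X_i$ and $X_i^\sym\mapsto X_i^\sym$. Concretely, the test for shift $j$ checks, for each $m=1,\dots,k/2$, that position $j+k/2+m$ and position $j+k/2+1-m$ carry the same variable index $\iota$, and that their types $\tau$ are related by the letter involution. Each check is $O(1)$ with the word stored as an array of pairs $(\iota,\tau)$, so each shift costs $O(k)$, and the $k$ shifts cost $O(k^2)$ in total. The algorithm accepts exactly when some shift passes, which is exactly the condition in Definition~\ref{def:symmetric}.

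The only point needing care is the convention that every occurrence of a symmetric variable is written uniformly as $X_i^\sym$, which is fixed by the paper's normalization. If the input instead mixes $X_i$ and $X_i^T$ for a variable declared symmetric, a linear-time preprocessing pass first rewrites all such letters as $X_i^\sym$. None of these steps is a genuine obstacle: all the mathematical content sits in the cited characterization theorems, and the corollary is a routine complexity count.
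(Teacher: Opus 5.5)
Your proposal is correct and matches the paper's reasoning. The paper likewise obtains this corollary by using Corollary~\ref{cor:sympossemidef} and Theorem~\ref{thm:realsym2} to reduce both spectral and trace nonnegativity to the purely combinatorial condition $P\eqshift LL^T$, which is checked by testing each of the $k$ cyclic shifts against $LL^T$ in $O(k)$ time apiece; your explicit index comparison and the rejection of odd $k$ simply spell out that quadratic-time string check.
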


Before detailing the proof techniques, we note that it is natural to extend this framework beyond real matrices. In the complex setting, we can ask when a formal product composed of complex matrices and their conjugate transposes is guaranteed to have real eigenvalues, nonnegative eigenvalues, or nonnegative traces. Furthermore, these inquiries also extend into infinite-dimensional analysis, prompting analogous questions for bounded operators on Hilbert spaces and elements of $C^*$-algebras evaluated under a tracial state.

Our main results imply analogues for all of these environments. As stated below for complex matrices and operators, the structural rigidity persists: the analytic properties hold if and only if the product is formally self-adjoint or close to self-adjoint. All previous definitions are straightforwardly adjusted to the complex and operator setting, for details see Section~\ref{sec:generalizations}.

\begin{coro}\label{cor:complex}
    Let $P$ be a matrix word over complex variables and their adjoints. The following statements are equivalent:
    \begin{enumerate}
        \item $P$ is formally self-adjoint (i.e., $P\eqshift LL^*$, where $L$ is a matrix word).
        \item $P$ is real-eigenvalued over arbitrary complex square matrices.
        \item $P$ is spectrally nonnegative over arbitrary complex square matrices.
        \item $P$ is trace nonnegative over arbitrary complex square matrices.
    \end{enumerate}
\end{coro}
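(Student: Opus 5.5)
The plan is to reduce Corollary~\ref{cor:complex} to Theorem~\ref{thm:realreal} using the standard real representation of complex matrices. The chain $1\Rightarrow 3\Rightarrow 2$ and $3\Rightarrow 4$ is immediate. If $P\eqshift LL^*$, then by the complex analogue of Fact~\ref{Fact:ABBA} (cyclic permutations preserve the nonzero spectrum and the trace), $P(A_1,\dots,A_\ell)$ has the same eigenvalues as the positive semidefinite matrix $L L^*$ evaluated at $(A_1,\dots,A_\ell)$. Hence all eigenvalues are real and nonnegative, and the trace is nonnegative. It therefore remains to show that $2\Rightarrow 1$ and $4\Rightarrow 1$.

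For these converses, observe that real matrices are particular complex matrices, and that on real matrices the adjoint coincides with the transpose. Given the complex word $P$, let $\tilde P$ be the real matrix word obtained by replacing each $X_i^*$ with $X_i^T$. For every tuple of real matrices, $P(A_1,\dots,A_\ell)=\tilde P(A_1,\dots,A_\ell)$.
\begin{enumerate}
\item If $P$ is real-eigenvalued over complex matrices, then $\tilde P$ is real-eigenvalued over arbitrary real square matrices.
\item If $P$ is trace nonnegative over complex matrices, then $\tilde P$ is trace nonnegative over arbitrary real square matrices.
\end{enumerate}
In either case, Theorem~\ref{thm:realreal} (item~\ref{itm:realrealrealev}, respectively item~\ref{itm:realrealtrace}, implies item~\ref{itm:realrealsym}) gives a real word $\tilde L$ with $\tilde P\eqshift \tilde L\tilde L^T$. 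Because the substitution $X_i^*\leftrightarrow X_i^T$ is a bijection between formal words, it commutes with cyclic shifts and sends formal adjoints to formal transposes. Translating back yields a complex word $L$ with $P\eqshift LL^*$. This is item~1.

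The only point needing care is this syntactic translation. The formal involutions $(\cdot)^*$ and $(\cdot)^T$ obey the same rules: both reverse the order of factors and toggle the decoration on each letter. So the map $\tilde{\ }$ is an isomorphism of free monoids with involution, and it preserves cyclic equivalence and the property of being a literal square. I do not expect a real obstacle, since all of the analytic difficulty sits in Theorem~\ref{thm:realreal}. The restriction to real inputs also makes the realification $A\mapsto\begin{pmatrix}\Re A&-\Im A\\ \Im A&\Re A\end{pmatrix}$ unnecessary for this direction.

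The realification would be the natural tool only for a converse transfer from real to complex, for instance to show directly that complex real-eigenvalued words are symmetric via spectra of doubled matrices. That transfer is not needed here.
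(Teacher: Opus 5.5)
Your proposal is correct and is essentially the paper's argument. The hard implications come from restricting to real substitutions, where $A^*=A^T$, applying Theorem~\ref{thm:realreal}, and translating $T$ back to $*$; the paper routes this through the restriction half of Claim~\ref{claim:complexrealequiv} and Theorems~\ref{thm:realsym}, \ref{thm:realsym2}, and additionally uses realification for the easy converse, which you instead obtain directly from $\operatorname{spec}(XY)=\operatorname{spec}(YX)$ and $LL^*\succeq 0$.
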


\begin{coro}\label{cor:HilJoh-operatorintro}
    Let $P$ be a noncommutative word over variables $\{X_1, \dots, X_\ell\}$. The following statements are equivalent:
    \begin{enumerate}
        \item For every infinite-dimensional Hilbert space $\mathcal{H}$, $P$ is spectrally nonnegative for all assignments of the variables to positive bounded linear operators from $\mathcal{H}$ to itself.
        \item For every unital $C^*$-algebra equipped with a tracial state $\tau$, $\tau(P) \ge 0$ for all assignments of the variables to positive elements in the algebra.
        \item $P$ is nearly symmetric; i.e., after a cyclic permutation, $P$ can be factored as the product of two palindromes.
    \end{enumerate}
\end{coro}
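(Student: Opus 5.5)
The plan is to prove (3)$\Rightarrow$(1) and (3)$\Rightarrow$(2) directly by operator-theoretic arguments, and to prove (1)$\Rightarrow$(3) and (2)$\Rightarrow$(3) by contraposition, reducing to the finite-dimensional characterization in Theorem~\ref{thm:HilJoh}.

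\emph{From near symmetry to positivity.} Suppose $P$ is nearly symmetric, so that after a cyclic permutation $P \eqshift Q_1Q_2$ with $Q_1,Q_2$ palindromes. Substitute positive operators into the variables; adjoints act trivially on them, so reversing a palindrome word gives its adjoint. For a palindrome $Q = X_{a_1}\cdots X_{a_m}$:
\begin{itemize}
\item if $m$ is even, write $Q = RR^*$, where $R$ is the first half of the word;
\item if $m$ is odd, write $Q = RX_cR^* = (RX_c^{1/2})(RX_c^{1/2})^*$, where $X_c$ is the middle letter.
\end{itemize}
In both cases $Q$ evaluates to a positive operator. Hence every evaluation of $P$ is a cyclic rotation of a product $S_1S_2$ of two positive operators $S_1,S_2$.

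We then use the standard Banach-algebra identity $\sigma(AB)\cup\{0\} = \sigma(BA)\cup\{0\}$. It shows that cyclic rotations do not change the spectrum apart from the point $0$. It also gives $\sigma(S_1S_2)\setminus\{0\} = \sigma(S_1^{1/2}S_2S_1^{1/2})\setminus\{0\}$. The right-hand side lies in $[0,\infty)$ because $S_1^{1/2}S_2S_1^{1/2}\ge 0$. This proves (1).

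For (2), let $\tau$ be a tracial state. Traciality gives invariance under cyclic rotation, and $\tau(S_1S_2) = \tau(S_1^{1/2}S_2S_1^{1/2}) \ge 0$ by positivity of $\tau$.

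\emph{From positivity to near symmetry.} Suppose $P$ is not nearly symmetric. By Theorem~\ref{thm:HilJoh} there exist $n$ and real symmetric positive semidefinite matrices $A_1,\dots,A_\ell\in\RR^{n\times n}$ such that:
\begin{itemize}
\item $P(A_1,\dots,A_\ell)$ has an eigenvalue $\lambda\notin[0,\infty)$, and
\item possibly for a different tuple, $\tr P(A_1,\dots,A_\ell) < 0$.
\end{itemize}
These matrices are positive as complex matrices as well.

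For (2): $M_n(\mathbb C)$ with the normalized trace $\frac1n\tr$ is a unital $C^*$-algebra with a tracial state, and the $A_i$ are positive elements of it. So (2) fails.

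For (1): given any infinite-dimensional Hilbert space $\mathcal H$, identify $\mathcal H \cong \mathbb C^n\otimes\mathcal H$, which is possible since $n\cdot\dim\mathcal H = \dim\mathcal H$ for infinite Hilbert dimensions. Substitute the positive operators $A_i\otimes I$. Then $P(A_i\otimes I) = P(A_1,\dots,A_\ell)\otimes I$. An eigenvector $v$ of $P(A_1,\dots,A_\ell)$ for $\lambda$ yields the eigenvectors $v\otimes h$ of $P(A_i\otimes I)$, so $\lambda$ lies in the spectrum and (1) fails.

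\emph{Expected obstacle.} The main point of care is the meaning of ``spectrally nonnegative'' for operators in infinite dimensions. I will interpret it as $\sigma(P)\subseteq[0,\infty)$. With this reading, both the $\sigma(AB)\cup\{0\}$ identity (used in the first direction) and the tensoring construction (used in the second) go through. Everything else reduces directly to Theorem~\ref{thm:HilJoh}.
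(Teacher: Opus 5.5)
Your proposal is correct and follows essentially the paper's own argument. You factor each palindrome as $RR^*$ or $RX_cR^*$ to get positive factors, compare $Q_1Q_2$ with $Q_1^{1/2}Q_2Q_1^{1/2}$ for both the spectrum and the tracial state, and obtain the converse by restricting to $M_n(\mathbb{C})$ and to finite-dimensional blocks inside $\mathcal{H}$. The only differences are cosmetic: you embed via $A_i\otimes I$ on $\mathbb{C}^n\otimes\mathcal{H}$ where the paper extends by zero on an orthogonal complement, and you invoke the real Theorem~\ref{thm:HilJoh} directly rather than its complex analogue Corollary~\ref{thm:HilJoh-complex}; both variants are valid.
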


As we establish in Section~\ref{sec:generalizations}, this exact equivalence scales flawlessly to infinite dimensions. The combinatorial condition of being formally self-adjoint completely characterizes bounded operators with real or nonnegative spectra on arbitrary Hilbert spaces, as well as positive tracial states on von Neumann algebras. Similar universal structural equivalences are also obtained when variables are restricted to Hermitian or positive semidefinite elements.

\subsection{Proof Sketch and Connection to Extremal Combinatorics}

As already mentioned earlier, most results follow from Theorems~\ref{thm:realsym} and~\ref{thm:realsym2} which are proven in a very similar manner. The following is a brief sketch for the proof of Theorem~\ref{thm:realsym}.
Using the elementary mechanisms of Fact~\ref{Fact:AAt} and Fact~\ref{Fact:ABBA}, it is straightforward to show that each symmetric structural case guarantees real eigenvalues. The main difficulty of Theorem~\ref{thm:realsym} lies in proving the reverse implication: forcing the exact algebraic structure from the analytic assumption.

Given a real-eigenvalued word $P$, our main proof idea translates the algebra into extremal combinatorics. We assign a specific asymmetric edge-rooted graph $F_i$ to each variable $X_i$ and glue these graphs along the edges of an even cycle $C_{2k}$, where the orientation of the root edge is governed by $\tau(i)$. In the case of $\tau(i)=\sym$, we glue a symmetrized version of $F_i$ to the cycle. The resulting graph $G$ can then be understood as representing the squared product $P^2$. The cyclic structure of $G$, combined with the analytic fact that $P$ is real-eigenvalued, ensures that $G$ is a so-called \emph{positive graph}. If $G$ has a reflection along an axis of the cycle, then $P$ is algebraically symmetric (as illustrated in Figure~\ref{fig:cyclesym}). If no such reflection exists, we utilize a foundational result of Camarena, Cs\'oka, Hubai, Lippner, and Lov\'asz~\cite{PosGr16}, which states that all vertices with the same walk-tree in $G$ also induce a positive subgraph. This allows us to pass to a positive subgraph of $G$ consisting of a cycle with copies of $F_1$ glued to it at regular intervals. However, we demonstrate that this specific class of graphs, which resembles the appearance of a sun, is strictly \emph{not} positive, thereby reaching a contradiction.

\begin{figure}[t]
\centering
\begin{tikzpicture}[dot/.style={circle,fill,inner sep=0pt,minimum size=7pt}]
 \draw node[regular polygon,regular polygon sides=8,draw,minimum size=6cm]
 (p8){}
  foreach \X in {1,...,8} {(p8.corner \X) node[dot](n-\X){}};

\begin{scope}[every edge/.style={draw=black,very thick}]
\path [dashed] (p8.corner 2) edge (p8.corner 6);
\draw (p8.corner 3) edge["$F_{\iota(1)}^{\tau(1)}$"] (p8.corner 2);
\draw (p8.corner 2) edge["$F_{\iota(2)}^{\tau(2)}=(F_{\iota(1)}^{\tau(1)})^T$"] (p8.corner 1);
\draw (p8.corner 1) edge["$F_{\iota(3)}^{\tau(3)}=(F_{\iota(4)}^{\tau(4)})^T$"] (p8.corner 8);
\draw (p8.corner 8) edge["$F_{\iota(4)}^{\tau(4)}=(F_{\iota(3)}^{\tau(3)})^T$"] (p8.corner 7);
\draw (p8.corner 7) edge["$F_{\iota(1)}^{\tau(1)}=(F_{\iota(2)}^{\tau(2)})^T$"] (p8.corner 6);
\draw (p8.corner 6) edge["$F_{\iota(2)}^{\tau(2)}$"] (p8.corner 5);
\draw (p8.corner 5) edge["$F_{\iota(3)}^{\tau(3)}$"] (p8.corner 4);
\draw (p8.corner 4) edge["$F_{\iota(4)}^{\tau(4)}$"] (p8.corner 3);
\end{scope}
\end{tikzpicture}
\caption{The symmetry of our auxiliary graph for a positive semidefinite symbolic matrix product.}\label{fig:cyclesym}
\end{figure}

\section{Preliminaries}\label{sec:prelim}

\subsection{Graph homomorphisms and eigenvalues}

For two graphs $G$ and $H$ we call a map $\varphi:V(G)\rightarrow V(H)$ a \emph{homomorphism} if it preserves adjacency, i.e. $uv\in E(G)$ implies $\varphi(u)\varphi(v)\in E(H)$. We denote the set of homomorphisms from $G$ to $H$ by $\Hom(G,H)$ and define $\hom(G,H)\coloneqq|\Hom(G,H)|$. This notion can be extended to an \emph{edge-weighted graph} $H$, where every edge $uv\in E(H)$ gets assigned a real number $\beta_{uv}$. A homomorphism $\varphi\in\Hom(G,H)$ then also gets assigned a \emph{weight} $w_\varphi=\prod_{uv\in E(G)}\beta_{\varphi(u)\varphi(v)}$ and in this case $\hom(G,H)\coloneqq\sum_{\varphi\in\Hom(G,H)}w_\varphi$. Furthermore, we call $\varphi\in\Hom(G,G)$ an \emph{endomorphism} of $G$ and set $\End(G)\coloneqq\Hom(G,G)$. A bijection $\varphi\in\Hom(G,H)$ is called an \emph{isomorphism} if $uv\in E(G)\Leftrightarrow \varphi(u)\varphi(v)\in E(H)$. Lastly, the set of isomorphisms from $G$ to $G$ is denoted by $\Aut(G)$ and such maps are called \emph{automorphisms}. The \emph{homomorphism density} of a graph $F$ in a graph $G$ is defined by
$$t(F,G)\coloneqq\frac{\hom(F,G)}{|V(G)|^{|V(F)|}}\;.$$
We will also consider the directed version of a graph. A \emph{digraph} is a pair $G=(V,E)$ such that $E\subseteq V\times V$ and all the above definitions naturally extend to digraphs. For a digraph $G$ with vertex set $V=\{v_1,\cdots,v_n\}$, the \emph{adjacency matrix} $A\in\mathbb R^{n\times n}$  is defined by
$$A_{i,j}\coloneqq\begin{cases}1,\textrm{ if }(v_i,v_j)\in E(G)\\0,\textrm{ otherwise }.\end{cases}$$
The adjacency matrix of a simple undirected graph is a real symmetric matrix and is therefore orthogonally diagonalizable and its eigenvalues are real. However, many facts also extend to non-symmetric adjacency matrices of digraphs whose eigenvalues can be complex. The following well-known fact relating the eigenvalues of a digraph and its directed cycle count will be useful later.

\begin{prop}[\protect{\cite[Example 5.11]{LoBook}}]\label{prop:cycletr}
    Let $k\geq 2$ and $C_k$ be the directed cycle of length $k$. If $A$ is the adjacency matrix of an edge-weighted digraph $G$ with eigenvalues $\lambda_1,\cdots,\lambda_n$, then
    \[\hom(C_k,G)=\tr(A^k)=\sum_{i=1}^n\lambda_i^k\;.\]
\end{prop}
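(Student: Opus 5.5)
The identity is proved by expanding both sides as the same double sum, organized around the entries of $A$. For an edge-weighted digraph $G$ on $\{v_1,\dots,v_n\}$, take $A_{i,j}=\beta_{v_iv_j}$ if $(v_i,v_j)\in E(G)$ and $A_{i,j}=0$ otherwise; this is the weighted extension of the adjacency matrix defined above. Label the vertices of the directed cycle $C_k$ as $u_1,\dots,u_k$ with arcs $(u_m,u_{m+1})$, indices taken modulo $k$.

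\textbf{Step 1: homomorphisms equal $\tr(A^k)$.} A map $\varphi:V(C_k)\to V(G)$ is the same thing as an index tuple $(i_1,\dots,i_k)\in[n]^k$, via $\varphi(u_m)=v_{i_m}$. This map is a homomorphism exactly when every arc $(v_{i_m},v_{i_{m+1}})$ lies in $E(G)$, and then its weight is $w_\varphi=\prod_{m=1}^k A_{i_m,i_{m+1}}$. If $\varphi$ is not a homomorphism, some factor $A_{i_m,i_{m+1}}$ is zero, so the same product vanishes. Hence we may sum over all tuples:
\[
\hom(C_k,G)=\sum_{(i_1,\dots,i_k)\in[n]^k}\prod_{m=1}^k A_{i_m,i_{m+1}}.
\]
Expanding matrix multiplication and taking the trace gives exactly this sum, so $\hom(C_k,G)=\tr(A^k)$. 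The cyclic indexing $i_{k+1}=i_1$ is what makes the trace appear.

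\textbf{Step 2: $\tr(A^k)$ equals the power sum of eigenvalues.} Triangularize $A$ over $\mathbb C$ using the Schur (or Jordan) form, $A=UTU^{-1}$ with $T$ upper triangular and diagonal $\lambda_1,\dots,\lambda_n$ (with algebraic multiplicity). Then $A^k=UT^kU^{-1}$, and $T^k$ is upper triangular with diagonal entries $\lambda_i^k$. By similarity invariance of the trace, $\tr(A^k)=\tr(T^k)=\sum_i\lambda_i^k$.

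The argument is routine. The only points needing care are that non-homomorphisms contribute zero because the corresponding matrix entries are zero, and that eigenvalues must be counted with algebraic multiplicity, since $A$ need not be diagonalizable. The hypothesis $k\ge2$ only avoids loop conventions for $C_1$; the computation itself works for any $k$.
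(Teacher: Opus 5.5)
Your proof is correct. The paper does not prove this proposition itself but cites it as a standard fact from Lov\'asz's book, and your argument is the standard proof behind that citation: expand $\tr(A^k)$ as a sum over closed index sequences, note that non-homomorphisms contribute zero, then triangularize to get $\sum_i\lambda_i^k$ with algebraic multiplicity. Your explicit weighted adjacency convention (zero entries for non-arcs, loops allowed) is also exactly what the paper tacitly needs when it later applies this to the looped weighted digraph $D$ with matrix $M$.
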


\subsection{Positive graphs}\label{subsec:pg}

A graph $G$ is called \emph{positive} if it has a positive homomorphism density in every edge-weighted graph (allowing also negative weights). See~\cite{PosGr16} and~\cite[Section 14.1]{LoBook} for a more detailed introduction. There is a conjectured characterization for positive graphs. A graph $G$ is called \emph{symmetric}, if its vertices can be partitioned into three sets $V(G)=S\cup A\cup B$ such that $S$ is an independent set, there is no edge between $A$ and $B$, and there exists an isomorphism between $G[S\cup A]$ and $G[S\cup B]$ which fixes every element of $S$. It is easy to see that every symmetric graph is positive. In~\cite{PosGr16} the authors also conjecture the reverse implication. This is one of the major conjectures concerning graph homomorphism densities and still remains completely open.

\begin{conj}[\protect{\cite[Conjecture~1]{PosGr16}}]\label{conj:posgraph}
A graph $G$ is positive if and only if it is symmetric.
\end{conj}

Theorem~\ref{thm:realreal} can therefore be seen as an analogue of Conjecture~\ref{conj:posgraph} for products of matrices. However, since there is still no proof known for Conjecture~\ref{conj:posgraph} we cannot utilize it for our proof and will instead make use of some other necessary conditions for positive graphs from~\cite{PosGr16}. The following is an easy observation.

\begin{prop}[\protect{\cite[Lemma~3]{PosGr16}}]\label{prop:posevenedges}
If $G$ is positive, then $G$ has an even number of edges.
\end{prop}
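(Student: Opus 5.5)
The statement is Proposition~\ref{prop:posevenedges}: a positive graph has an even number of edges. The plan is to evaluate the homomorphism density of $G$ in a single, very simple edge-weighted target graph whose weights are all negative. Then every homomorphism carries the same sign, and that sign is determined only by the parity of $|E(G)|$.

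\textbf{Choice of target.} Let $H$ be the graph on one vertex $v$ with a loop at $v$ of weight $\beta_{vv}=-1$. If loops are not allowed in the class of target graphs, use $H=K_m$ for $m\ge \chi(G)$, giving every edge weight $-1$.

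\textbf{Computing the sign.} Every $\varphi\in\Hom(G,H)$ sends each edge of $G$ to an edge of weight $-1$. Hence
\[
w_\varphi=\prod_{uv\in E(G)}\beta_{\varphi(u)\varphi(v)}=(-1)^{|E(G)|}.
\]
It follows that
\[
\hom(G,H)=(-1)^{|E(G)|}\,\hom(G,H_0),
\]
where $H_0$ is the same graph with all weights equal to $1$. Since $\hom(G,H_0)>0$, we have $\operatorname{sgn} t(G,H)=(-1)^{|E(G)|}$. In the loop case, $\hom(G,H_0)=1$ because the constant map is the only homomorphism. In the $K_m$ case, $\hom(G,H_0)>0$ because $G$ has a proper $m$-colouring.

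\textbf{Conclusion.} Positivity of $G$ requires $t(G,H)\ge 0$, which forces $(-1)^{|E(G)|}=1$, so $|E(G)|$ is even.

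The only point requiring any care is the convention on admissible target graphs. Either loops are permitted, which gives the cleanest argument, or one checks that $K_m$ with $m$ large admits at least one homomorphism. Neither is a real obstacle.
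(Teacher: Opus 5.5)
Your argument is correct and is essentially the standard proof behind \cite[Lemma~3]{PosGr16}, which the paper cites without reproducing. Weighting every edge of the target by $-1$ (a single looped vertex, or equivalently the constant weighting $-1$) makes every homomorphism contribute $(-1)^{|E(G)|}$, so nonnegativity forces $|E(G)|$ to be even; your $K_m$ fallback for loopless targets is also valid, since $m\ge\chi(G)$ guarantees $\hom(G,K_m)>0$.
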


Furthermore, there is a connection between positive graphs and their walk-tree partition.

\begin{defi}
Given a rooted graph $(G,v)$, the \textbf{\emph{walk-tree}} is an infinite rooted tree denoted by $R_G(v)$. Its vertex set consists of all finite walks starting from $v$ and its root is the $0$-length walk. A walk is connected to another walk if the latter can be obtained from the former by deleting its last node. The \textbf{\emph{walk-tree partition}} $\mathcal{R}$ is the partition of $V(G)$ in which two nodes $u,v\in V(G)$ belong to the same class if and only if $R_G(u)\cong R_G(v)$ as rooted trees.
\end{defi}

Our main tool for passing to a positive subgraph of a positive graph will be given by the following lemma.

\begin{lem}[\protect{\cite[Corollary~16]{PosGr16}}]\label{lem:indwalktrcl}
Let $G=(V,E)$ be a positive graph, and let $S\subseteq V$ be the union of some classes of the corresponding walk-tree partition. Then $G[S]$ is also positive.
\end{lem}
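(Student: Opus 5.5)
The statement is Lemma~\ref{lem:indwalktrcl}, quoted from \cite[Corollary~16]{PosGr16}. The plan is to rederive it from the way homomorphism densities interact with the walk-tree partition.

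\textbf{Reformulation.} First, I would restate positivity in terms of weighted targets, allowing node weights as well as edge weights. By a standard limiting and blow-up argument, $G$ is positive if and only if $\hom(G,H)\ge 0$ for every $H$ with real edge weights $\beta$ and positive node weights $\alpha$.

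\textbf{Step 1 (refined, label-respecting targets).} Let $\mathcal R=\{R_1,\dots,R_m\}$ be the walk-tree partition. I would use the characterization of equal walk-trees via equitable partitions: $u,v$ lie in the same class if and only if they lie in the same class of the coarsest equitable partition. This means every vertex of $R_i$ has the same number $d_{ij}$ of neighbours in $R_j$.

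Now take an arbitrary weighted $H$ and a parameter $N$. Build $H_N$ as the disjoint union of copies $H^{(1)},\dots,H^{(m)}$ of $H$. On edges between $H^{(i)}$ and $H^{(j)}$ with $ij$ realised by an edge of $G$, put weight $N^{c_{ij}}\beta$ for suitably chosen exponents $c_{ij}$; put weight $0$ between copies that are not adjacent in the quotient. The aim is to choose the $c_{ij}$, or more simply node weights $N^{a_i}$ on $H^{(i)}$ with $a_i=1$ for $R_i\subseteq S$ and $a_i=0$ otherwise, so that $\hom(G,H_N)$ becomes a polynomial in $N$. Its leading coefficient should come from homomorphisms sending each class $R_i$ into $H^{(i)}$.

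\textbf{Step 2 (extracting $G[S]$).} Among the label-respecting maps, those sending $S$ into the heavily weighted copies and $V\setminus S$ into a single-vertex copy with weight $1$ and a loop of weight $1$ contribute $\hom(G[S],H)$ times a positive constant. The constant comes from the edges between $S$ and $V\setminus S$. I would attach a loop of weight $\varepsilon$ and scale so that these cross edges contribute a fixed positive factor.

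Since the positivity of $G$ gives $\hom(G,H_N)\ge 0$ for every $N$, dividing by the leading power of $N$ and letting $N\to\infty$ yields $\hom(G[S],H)\ge 0$. This proves the lemma.

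\textbf{Main obstacle.} The hard step is Step 2. Non-label-respecting homomorphisms, which map a vertex of $R_i$ into a copy $H^{(j)}$ with $j\neq i$, must be shown to contribute only lower-order terms in $N$. This is exactly where equitability of the partition is needed: it lets one compare, class by class, how many vertices land in heavy copies. I would try to prove that the label-respecting assignment uniquely maximises the degree in $N$, via a counting argument using the constant class-degrees $d_{ij}$.

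If this becomes delicate, I would fall back to the original argument of \cite{PosGr16}, which goes through automorphism-invariant blow-ups, and simply invoke it, as the paper itself does.
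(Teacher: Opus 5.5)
\textbf{There is a genuine gap.} The paper gives no proof of this lemma. It imports it verbatim from \cite[Corollary~16]{PosGr16}, so your fallback of citing it is exactly what the paper does. Your self-contained sketch, however, breaks at the step you flag as the main obstacle. Nothing in it shows that the intended homomorphisms carry the top power of $N$, and for the weightings you describe this is false.

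In Step~1 every edge between copies carries $\beta$. So even the label-respecting maps contribute a multiple of $\hom(G,H)$, never $\hom(G[S],H)$.

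In Step~2 there is a looped sink for $V\setminus S$ and node weight $N$ on the copies for classes in $S$. The exponent is then just the number of vertices landing in heavy copies, and maps that push extra vertices there win. Take $G=P_3$ with path $a\,b\,c$, classes $\{a,c\}$ and $\{b\}$, and $S=\{b\}$. Sending $b$ to the sink and $a,c$ into the heavy copy has weight $N^2(\sum_h\alpha_h)^2$, while the intended map has only order $N$. The leading coefficient is therefore $\hom(2K_1,H)$ instead of $\hom(K_1,H)$. That partition is equitable, so no counting argument with the $d_{ij}$ can rescue this weighting; the weighting itself must change.

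\textbf{What is missing} is an exponent for which the class map $\pi$ is the \emph{unique} maximizer. One way to get it is to make the exponent separate over vertices.
\begin{itemize}
\item Use labels $1,\dots,m$ with a complete looped label graph, and let $U=\{i : R_i\subseteq S\}$.
\item Realize each label $i\notin U$ by a single looped vertex joined to everything, and each label $i\in U$ by a copy of $H$.
\item Give the edge between labels $i,j$ the factor $N^{f(i)+f(j)}$, multiplied by $\beta$ only when $i,j\in U$, and give label $i$ node weight $N^{a(i)}$.
\end{itemize}
Then $\hom(G,H_N)=\sum_{\psi}N^{E(\psi)}\hom(G[\psi^{-1}(U)],H)$, where $E(\psi)=\sum_v\bigl(\deg(v)f(\psi(v))+a(\psi(v))\bigr)$. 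With $f(i)=2\deg(R_i)$ and $a(i)=-\deg(R_i)^2$, the term for $v$ equals $\deg(v)^2-(\deg(v)-\deg(R_{\psi(v)}))^2$. So every vertex is forced, independently, onto a label of its own degree.

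At a smaller scale $N^{\delta}$, add edge exponents $g(\psi(u),\psi(v))+g(\psi(v),\psi(u))$, where $g(i,j)$ reads only the degree of $R_j$. Once degrees are fixed this is again separable. For example, take $g(i,j)=2n_i(\deg R_j)$ with node term $-\|n_i\|^2$, where $n_i$ is the neighbour-degree count vector of class $i$. This forces each vertex onto a label with its own neighbour-degree multiset.

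Iterate with scales $1,\delta,\delta^2,\ldots$ through the rounds of colour refinement, whose stable partition is the walk-tree partition. This leaves $\pi$ as the unique maximizer, so $\hom(G,H_N)=N^{E(\pi)}\hom(G[S],H)+o(N^{E(\pi)})$. Positivity of $G$ then gives $\hom(G[S],H)\ge 0$. Without an ingredient of this kind, your argument reduces to the citation.
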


We will make use of the following elementary observations about isomorphic walk-trees. In the following, if we denote walk-trees as isomorphic we always mean isomorphic as rooted trees.

\begin{prop}\label{prop:isoimplsameneighiso}
Let $G$ be a graph and $v,u\in V(G)$. If $R_G(u)\cong R_G(v)$, then there exists a bijection $\varphi:N_G(u)\rightarrow N_G(v)$ such that $R_G(w)\cong R_G(\varphi(w))$ for every $w\in N_G(u)$.
\end{prop}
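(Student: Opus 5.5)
The plan is to read off the neighbourhood structure from the first level of the walk-trees. In $R_G(u)$ the children of the root are exactly the walks $(u,w)$ with $w\in N_G(u)$. The subtree hanging below such a child $(u,w)$ consists of all walks that begin with $u,w$. Deleting the initial vertex $u$ gives a bijection between these walks and the walks starting at $w$. This bijection preserves the parent relation, so the subtree of $R_G(u)$ rooted at $(u,w)$ is isomorphic, as a rooted tree, to $R_G(w)$. The same holds in $R_G(v)$ for every $w'\in N_G(v)$.

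Now fix a rooted-tree isomorphism $\psi:R_G(u)\to R_G(v)$. Since $\psi$ maps the root to the root, it preserves depth. In particular it maps the children of the root of $R_G(u)$ bijectively onto the children of the root of $R_G(v)$. Define $\varphi(w)$ to be the unique $w'\in N_G(v)$ with $\psi((u,w))=(v,w')$. This is well defined and bijective because $G$ is a simple graph, so distinct neighbours give distinct length-one walks. Moreover, $\psi$ restricts to an isomorphism from the subtree below $(u,w)$ onto the subtree below $(v,\varphi(w))$, since descendants are mapped to descendants. Combining this with the identification above gives
\[
R_G(w)\cong \text{subtree at }(u,w)\cong\text{subtree at }(v,\varphi(w))\cong R_G(\varphi(w)),
\]
which is the claimed bijection.

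There is no real obstacle in this argument. The only point that needs care is checking that the subtree identification is an isomorphism of rooted trees, that is, that it respects both the root and the parent relation. This is immediate, because deleting the first vertex commutes with deleting the last vertex of a walk of length at least two. One might worry about infinite trees, but the root's children form a finite set of size $\deg(u)$, so no cardinality issue arises. The argument also shows $\deg(u)=\deg(v)$.
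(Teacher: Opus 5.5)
Your proof is correct and takes essentially the same approach as the paper. The paper likewise identifies the subtree of $R_G(u)$ rooted at the child $(u,z)$ with $R_G(z)$, and uses the isomorphism $R_G(u)\cong R_G(v)$ to match these depth-one subtrees bijectively, which induces $\varphi$; you are just more explicit about why a root-preserving isomorphism restricts to the children of the root and to their descendant subtrees.
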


\begin{proof}
Note that for every vertex $v\in V(G)$ and $z\in N_G(v)$ we have that the rooted subtree of $R_G(v)$ starting at $(vz)$, and denoted by $R_G(vz|v)$, is isomorphic to $R_G(z)$. Since $R_G(u)\cong R_G(v)$, there must be a bijection between $\{R_G(vz|v)\mid z\in N_G(v)\}$ and $\{R_G(uz|u)\mid z\in N_G(u)\}$ such that the corresponding pairs are isomorphic as rooted trees which by our previous observation induces the desired $\varphi$.
\end{proof}

Let $G$ be a graph and $v\in V(G)$. We call the multiset $D_G(v)\coloneqq\{d_G(w)\mid w\in N_G(v)\}$ the \emph{neighbourhood degree sequence}.

\begin{prop}\label{prop:isoimplsamendeg}
Let $G$ be a graph and $v,u\in V(G)$. If $R_G(u)\cong R_G(v)$, then $d_G(v)=d_G(u)$ and $D_G(u)=D_G(v)$.
\end{prop}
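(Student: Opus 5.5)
The plan is to read both invariants off the walk-tree directly. The degree equality comes from the root, and the neighbourhood degree sequence comes from the subtrees one level down.

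First, the degree. In the walk-tree $R_G(v)$, the children of the root (the $0$-length walk at $v$) are exactly the $1$-length walks $(vz)$ with $z\in N_G(v)$. Hence the number of children of the root equals $d_G(v)$. A rooted-tree isomorphism $R_G(u)\cong R_G(v)$ maps the root to the root and preserves the number of children of every node. It follows that $d_G(u)=d_G(v)$.

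Second, the neighbourhood degree sequence. Here I would invoke Proposition~\ref{prop:isoimplsameneighiso}. It gives a bijection $\varphi:N_G(u)\rightarrow N_G(v)$ with $R_G(w)\cong R_G(\varphi(w))$ for every $w\in N_G(u)$. Applying the first step to each pair $w,\varphi(w)$ gives $d_G(w)=d_G(\varphi(w))$. Since $\varphi$ is a bijection, the multisets $D_G(u)=\{d_G(w)\mid w\in N_G(u)\}$ and $D_G(v)=\{d_G(\varphi(w))\mid w\in N_G(u)\}$ coincide.

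There is no real obstacle here; the only point needing care is that rooted isomorphism fixes the root, so root degrees agree.
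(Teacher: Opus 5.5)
Your proof is correct, and the degree part is exactly the paper's argument. For $D_G$ the routes differ slightly. The paper reads the multiset directly off the tree, using that each child $(vw)$ of the root has degree $d_G(w)+1$ in $R_G(v)$ (its $d_G(w)$ children plus its parent). You instead go through Proposition~\ref{prop:isoimplsameneighiso} and reapply the degree step to each matched pair. This is an equally valid, slightly more modular variant of the same idea.
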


\begin{proof}
First note that $R_G(u)\cong R_G(v)$ implies that $d_{R_G(u)}((u))=d_{R_G(v)}((v))$ and $D_{R_G(u)}((u))=D_{R_G(v)}((v))$. Therefore $d_G(v)=d_{R_G(v)}((v))=d_{R_G(u)}((u))=d_G(u)$. Furthermore, for every $w\in N_G(v)$ we have $d_G(w)=d_{R_G(v)}((vw))-1$ implying $D_G(v)=\{x-1\mid x\in D_{R_G(v)}((v))\}$ as multisets. Similarly we also have $D_G(u)=\{x-1\mid x\in D_{R_G(u)}((u))\}$ and therefore $D_G(v)=D_G(u)$.
\end{proof}

\subsection{Notation}

Throughout the paper, when we use indices from a set of natural numbers $[m]=\{1,\cdots,m\}$ we often identify this index set with $\mathbb Z/m\mathbb Z$ and calculate with indices modulo $m$. It will be clear from the context and not always be mentioned explicitly.

\section{A collection of rigid graphs}\label{sec:rigidgraphs}

For our proof we need a collection of highly non-symmetric graphs whose vertices are distinguishable by their degrees or the degrees of their neighborhoods. For this we consider the well-known \emph{Erd\H os-Rényi-Gilbert random graph model} $G(n,p)$, $p\in[0,1]$, which is a probability distribution on all graphs with vertex set $[n]$ generated by including each edge independently and uniformly with probability $p$. We call a graph $G$ \emph{asymmetric} if its automorphism group is trivial, i.e. $\Aut(G)=\{\id\}$. For an even stronger property we call a graph $G$ \emph{rigid} if there is no non-trivial homomorphism from $G$ to itself, i.e. $\End(G)=\{\id\}$. It was already shown by Erd\H os and R{\'e}nyi~\cite{ErRe63} that almost all graphs are asymmetric. The same holds for rigidity. We say that a property holds for $G(n,p)$ \emph{asymptotically almost surely (a.a.s.)} if the probability that $G\sim G(n,p)$ satisfies the property tends to $1$ as $n \to \infty$.

\begin{thm}[\protect{\cite[Theorem~4.7]{HeNe04}}]\label{thm:rigid}
    Let $G\sim G(n,\tfrac{1}{2})$. Then a.a.s. $G$ is rigid.
\end{thm}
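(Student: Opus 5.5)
The theorem is cited from \cite{HeNe04}, but I would reprove it by a first-moment (union bound) argument over all non-identity maps $f:[n]\to[n]$. For each such $f$ I would bound the probability that $f\in\End(G)$ for $G\sim G(n,\tfrac12)$, and then sum over all $f$. A map $f$ is a homomorphism exactly when, for every pair $p=\{u,v\}$ with $uv\in E(G)$, the image pair $f(p)=\{f(u),f(v)\}$ is an edge. In particular $f(u)\neq f(v)$ is needed.

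For a pair $p$, let $E_p$ be the event that the constraint at $p$ holds. The cases are as follows.
\begin{itemize}
\item If $f(u)=f(v)$, then $E_p$ says $uv\notin E(G)$, which has probability $\tfrac12$.
\item If $f(p)$ is a pair different from $p$, then $E_p$ fails exactly when $p\in E(G)$ and $f(p)\notin E(G)$. This has probability $\tfrac14$, so $\Pr(E_p)=\tfrac34$.
\item If $f(p)=p$, the constraint is vacuous.
\end{itemize}

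Fix $f$ and let $M$ be the set of vertices moved by $f$, with $m=|M|\ge 1$. Consider the pairs $\{u,w\}$ with $u\in M$ and $w$ arbitrary. Such a pair is fixed setwise only if $f(u)=w$ and $f(w)=u$, and for each $u$ there is at most one such $w$. Hence there are at least $N\ge m(n-1)/2-m\ge mn/3$ non-fixed pairs $\mathcal P$ (for $n$ large).

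The events $E_p$ are not independent, since $E_p$ depends on both $p$ and $f(p)$. I handle this with a matching argument. Form the directed graph $D$ on all pairs with arcs $p\to f(p)$, dropping the arc when $f(p)$ is a singleton. Every vertex of $D$ has out-degree at most $1$, so $D$ is a functional graph. Its restriction to $\mathcal P$ has no loops. I want a subfamily $\mathcal P'\subseteq\mathcal P$ such that the sets $\{p\}\cup\{f(p)\}$, for $p\in\mathcal P'$, are pairwise disjoint. The events $E_p$ for $p\in\mathcal P'$ then depend on disjoint sets of potential edges and are independent. Choosing $\mathcal P'$ amounts to a matching in the underlying undirected graph of a loopless functional graph, together with singleton-image pairs, which need no partner. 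A greedy argument along trees and cycles gives $|\mathcal P'|\ge |\mathcal P|/3$. I expect verifying this constant, including the bookkeeping when $f(p)$ lies outside $\mathcal P$, to be the only fiddly step; any constant fraction suffices. This yields
\[\Pr(f\in\End(G))\le (3/4)^{mn/9}.\]

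Finally, the number of maps moving exactly $m$ vertices is at most $\binom nm n^m\le n^{2m}$. The expected number of non-identity endomorphisms is therefore at most
\[\sum_{m\ge1} n^{2m}(3/4)^{mn/9}=\sum_{m\ge1}\bigl(n^2(3/4)^{n/9}\bigr)^m.\]
This tends to $0$ as $n\to\infty$. By Markov's inequality, $\End(G)=\{\id\}$ a.a.s. This argument also recovers asymmetry, so the Erdős–Rényi result \cite{ErRe63} is not needed separately.
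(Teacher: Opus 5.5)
The step you flagged as merely fiddly is where the argument breaks. The bound it is meant to deliver, $\Pr(f\in\End(G))\le(3/4)^{mn/9}$, is false for non-injective $f$.

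The claim $|\mathcal P'|\ge|\mathcal P|/3$ does hold when $f$ is a permutation. In that case $p\mapsto f(p)$ permutes the pairs, the non-fixed pairs form cycles of length $L\ge2$, and each cycle has a matching of size $\lfloor L/2\rfloor\ge L/3$. For non-injective $f$, however, many pairs can share the same image $f(p)$. Because the sets $\{p\}\cup\{f(p)\}$ must be pairwise disjoint, at most one pair from each such group can enter $\mathcal P'$.

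Concretely, let $f$ send every vertex of a set $M$ with $|M|=m$ to one vertex $x\notin M$, and fix all other vertices. For each $w\notin M\cup\{x\}$, all $m$ pairs $\{u,w\}$ with $u\in M$ have image $\{x,w\}$. Hence $|\mathcal P'|\le\binom{m+1}{2}+n$, while $|\mathcal P|\ge m(n-m-1)$.

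Worse, $f\in\End(G)$ holds exactly when $M\cup\{x\}$ is independent and every $w\notin M\cup\{x\}$ with a neighbour in $M$ is adjacent to $x$. Therefore
\[\Pr(f\in\End(G))=2^{-\binom{m+1}{2}}\Bigl(\tfrac12+2^{-m-1}\Bigr)^{n-m-1}\ge 2^{-\binom{m+1}{2}-n}.\]
For $m=\lfloor\sqrt n\rfloor$ this is at least $2^{-2n}$, whereas $(3/4)^{mn/9}=2^{-\Omega(n^{3/2})}$. The constraints from pairs with a common image are positively correlated and together cost only a factor of about $\tfrac12$. So no uniform bound of the form $(3/4)^{cmn}$ can hold, and the comparison with the count $n^{2m}$ is unjustified.

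For comparison, the paper gives no proof of this statement; it simply imports it from \cite{HeNe04}. A self-contained first-moment proof can be rescued, but non-injective maps need their own estimate, balanced against a finer count of maps. Here is one route.
\begin{enumerate}
\item A bijective endomorphism of a finite graph is an automorphism, and your matching argument restricted to permutations correctly proves asymmetry.
\item If $f\in\End(G)$ is not injective, some power $r=f^j$ is idempotent and still non-injective. Thus $r$ is a retraction onto a proper subset $R=r(V(G))$.
\item Consider a retraction moving $m$ vertices onto a set $T\subseteq R$ of size $t$. Every fibre $r^{-1}(x)$ must be independent, which costs $2^{-\sum_x\binom{|r^{-1}(x)|}{2}}\le 2^{-\max\{m,\,m^2/(2t)\}}$.
\item There are at least $t(|R|-1)/2$ pairs $\{x,w\}\subseteq R$ meeting $T$. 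For each one, the grouped event ``an edge between $r^{-1}(x)$ and $r^{-1}(w)$ forces $xw\in E(G)$'' has probability at most $3/4$. All these events, together with the fibre conditions, live on pairwise disjoint sets of pairs.
\item Sum against the at most $\binom nm\binom{n-m}{t}t^m$ such retractions. Split according to whether $m^2/t$ or $t(n-m)$ gives the dominant exponent. This shows the expected number of proper retractions tends to $0$.
\end{enumerate}
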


We also want to distinguish vertices by the degrees of their neighbors. For that we use a result of Babai, Erd\H os and Selkow~\cite{BaerSe80} that the degrees of the highest degree vertices in $G(n,\tfrac{1}{2})$ are all distinct.

\begin{thm}[\protect{\cite[Theorem~1.2 and Theorem~3.6]{BaerSe80}}]\label{thm:ndegrsequ}
    Let $r=\lceil 3\log(n)/\log(2)\rceil$ and $G\sim G(n,\tfrac{1}{2})$. Denote by $d_1\geq d_2\geq\cdots\geq d_n$ the degrees of $G$ and let $S\subseteq V(G)$ be the set of vertices corresponding to the degrees $d_1,\cdots,d_{r}$. Then a.a.s. for every $1\leq i<j\leq r+2$ we have $d_i>d_j$ and for every $\{u,v\}\subseteq V(G)\setminus S$ that $N_G(v)\cap S\neq N_G(u)\cap S$.
\end{thm}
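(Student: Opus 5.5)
The first assertion (the top $r+2$ degrees are pairwise distinct) is a first- and second-moment computation for binomial degrees. The second assertion is a union bound, made legitimate by decoupling the set $S$ from the edges at a fixed pair $u,v$. I would prove a slightly stronger gap statement first, because the second step needs it.

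\emph{Step 1 (a threshold with many vertices above it).} Each degree $d_v$ is distributed as $\mathrm{Bin}(n-1,\tfrac12)$. Two degrees $d_u,d_v$ depend on each other only through the single edge $uv$, so after conditioning on that edge they are independent. Write $k=\tfrac{n}{2}+x\sqrt n$. The de~Moivre--Laplace local estimate gives
\[
p_k:=\Pr(d_v=k)\approx n^{-1/2}e^{-2x^2},\qquad q_k:=\Pr(d_v\ge k)\approx e^{-2x^2}/x .
\]
I would choose the threshold $t$ so that $nq_t=\log^2 n$, which gives $x\sim\sqrt{\tfrac12\log n}$. The number of vertices of degree at least $t$ has mean $\log^2 n$. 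Its variance is close to the mean, since the covariances coming from the shared edge are negligible. Chebyshev's inequality then shows that a.a.s. at least $r+2$ vertices have degree at least $t$. Hence $d_1,\dots,d_{r+2}\ge t$.

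\emph{Step 2 (gaps among the top degrees).} I bound the expected number of pairs $\{u,v\}$ with $d_u,d_v\ge t$ and $|d_u-d_v|\le 4$. This expectation is at most
\[
\binom n2\cdot 9\cdot\sum_{k\ge t}p_k\,\max_{|j|\le4}p_{k+j}\;\lesssim\; n^2q_t\,p_t\;\approx\;(nq_t)^2\,\frac{x}{\sqrt n}\;=\;\frac{\log^{4.5}n}{\sqrt n}\;\to 0 .
\]
So a.a.s. all vertices of degree at least $t$ have degrees differing pairwise by at least $5$. In particular $d_i>d_j$ for all $1\le i<j\le r+2$.

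\emph{Step 3 (the neighbourhoods in $S$).} Fix a pair $u,v$ and let $G'=G-\{u,v\}$. Deleting $u$ and $v$ changes every other degree by at most $2$. Step 2 can be applied verbatim to $G'$ (a random graph on $n-2$ vertices), so a.a.s. its top $r+2$ degrees are separated by gaps of at least $5$. I would then show that on this event, and whenever $u,v\notin S$, the set $S$ equals the set $S'$ of the $r$ highest-degree vertices of $G'$. Restoring the edges at $u$ and $v$ shifts each degree by at most $2$, which cannot reorder degrees separated by $5$. I also need that $u$ or $v$ cannot enter the top $r$ without displacing some element of $S'$, and that this case is excluded by $u,v\notin S$.

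The set $S'$ is a function of $G'$ alone. The edges from $u$ and $v$ to $V(G')$ are independent of $G'$. Therefore
\[
\Pr\bigl(N(u)\cap S'=N(v)\cap S'\bigr)=2^{-r}\le n^{-3}.
\]
This bound ignores the exceptional probability, but note that it is taken for a fixed pair. A union bound over all $\binom n2$ pairs then gives total probability $O(n^{-1})$ for the bad event, plus the probability that the gap property fails.

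\emph{Main obstacle.} The delicate point is making the union over pairs legitimate. The gap event for $G-\{u,v\}$ must fail with probability $o(n^{-2})$ for each pair, not merely $o(1)$. The crude first-moment bound of Step 2 gives only about $n^{-1/2}$, so it is not strong enough. I would try to fix this by enlarging the required gap and choosing $t$ higher, but the count above $t$ must stay at least $r+2$, and it is unclear whether both demands can be met. Alternatively, I would define the separation event once for the whole graph in a robust form: the top $r+2$ degrees of $G$ are pairwise at distance at least $9$. I would then check that this single event implies $S=S'$ simultaneously for every pair $u,v$. This way the only per-pair probability that enters the union bound is the clean $2^{-r}$.
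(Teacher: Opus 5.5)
The paper does not prove this statement; it imports it from Babai, Erd\H{o}s and Selkow. So the comparison is between a citation and your explicit argument. Your outline is a correct self-contained proof along the classical lines: local limit estimates, a second-moment count above a threshold $t$, a first-moment bound on near-collisions, and decoupling $S$ from the edges at $u$ and $v$.

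Two points should be settled rather than left open.

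First, abandon the per-pair version of Step 3, and with it the idea of enlarging the gap or raising $t$. No per-pair gap event can fail with probability $o(n^{-2})$. Already the two largest degrees coincide with probability of order $\sqrt{\log n/n}$.

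Second, your global alternative works with what Steps 1 and 2 already give; a gap of $9$ is not needed.
\begin{itemize}
\item At least $r+2$ vertices have degree at least $t$. Hence the vertices realising $d_r$ and $d_{r+1}$ both lie above $t$, and Step 2 gives $d_r\ge d_{r+1}+5$.
\item Suppose $u,v\notin S$. Every $w\in S$ has degree at least $d_r-2$ in $G-\{u,v\}$. Every other vertex of $G-\{u,v\}$ has degree at most $d_{r+1}$ there.
\item Therefore $S=S'_{uv}$, the set of $r$ highest-degree vertices of $G-\{u,v\}$. Fix any tie-breaking rule, so that $S'_{uv}$ is a function of $G-\{u,v\}$ alone.
\item Write $\mathcal E$ for the event of Steps 1 and 2. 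The bad event for $\{u,v\}$, intersected with $\mathcal E$, is contained in $\{N(u)\cap S'_{uv}=N(v)\cap S'_{uv}\}$.
\item This last event has probability exactly $2^{-r}\le n^{-3}$, with no conditioning needed. Given $G-\{u,v\}$, the $2r$ edges between $\{u,v\}$ and $S'_{uv}$ are independent fair coins.
\item The total failure probability is therefore at most $\Pr(\mathcal E^c)+\binom n2 n^{-3}=o(1)$.
\end{itemize}
Conditioning on the edge $uv$ in Step 2 only shifts $t$ by one and replaces $n-1$ by $n-2$, which is harmless.

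Compared with the citation, your route isolates the one robustness property actually needed: the gap $d_r\ge d_{r+1}+3$ at the cut-off. That gap is exactly what decouples $S$ from the edges at $u$ and $v$. The citation gains only brevity.
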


Using those two theorems and sampling random graphs of different sizes we can obtain a collection of graphs whose vertices are easily distinguishable. We call a pair $(F,e)$ where $F$ is a graph and $e=(u,w)$ an orientation of an edge of $F$ an \emph{edge-rooted graph}.

\begin{prop}\label{prop:rigidfamily}
For every $\ell,n_0\in\NN$ there exists a family $\{(F_1,(a_1,b_1)),\cdots,(F_{\ell},(a_\ell,b_\ell))\}$ of edge-rooted graphs, each on $n_i\geq n_0$, $i\in[\ell]$, vertices, such that the following properties hold.
\begin{enumerate}[label=(\arabic*)]
    \item $F_{i}$ is rigid for every $i\in[\ell]$.\label{itm:rigidf}
    \item For every $i\in[\ell]$, for every pair of vertices $\{u,v\}\subseteq V(F_i)$ there exist $x,y,z\in N_{F_i}(u)\cap N_{F_i}(v)$ such that $xy,xz,yz\in E(F_i)$.\label{itm:connectivity}
    \item $N_{F_i}(a_i)\setminus N_{F_i}(b_i)\neq\emptyset\neq N_{F_i}(b_i)\setminus N_{F_i}(a_i)$ for every $i\in [\ell]$.\label{itm:notcommonneigh}
    \item $d_{F_i}(a_i)<d_{F_i}(b_i)$ for every $i\in[\ell]$.\label{itm:rootdeg}
    \item $d_{F_i}(u)<d_{F_j}(v)$ for all $i,j\in[\ell]$ with $i<j$ and all $u\in V(F_i)$ and $v\in V(F_j)$.\label{itm:diffdeg}
        \item \label{itm:sumdeg}     If $i,j,i',j'\in[\ell]$ with $\{i,j\}\neq\{i',j'\}$, then the set of values:
        \begin{align*}
        \mathcal{D}_{i,j} = \{d_{F_i}(u_1)+ d_{F_i}(u_1') + d_{F_j}(v_1) + d_{F_j}(v_1') - 2, \ \ d_{F_i}(u_1)+ d_{F_i}(u_1') + d_{F_j}(v_1) - 1,  \\
        d_{F_i}(u_1)+d_{F_j}(v_1): u_1, u_1'\in V(F_i),v_1, v_1'\in V(F_j) \}
        \end{align*}
        is disjoint from the set of values
          \begin{align*}
            \mathcal{D}_{i',j'} =\{d_{F_{i'}}(u_2)+ d_{F_{i'}}(u_2')+d_{F_{j'}}(v_2) + d_{F_{j'}}(v_2') - 2, \ \ d_{F_{i'}}(u_2)+ d_{F_{i'}}(u_2')+d_{F_{j'}}(v_2) - 1, \\ d_{F_{i'}}(u_2)+d_{F_{j'}}(v_2) : u_2, u_2'\in V(F_{i'}),v_2, v_2'\in V(F_{j'})\}.
              \end{align*}
   \item \label{itm:sumdeg2}
        Additionally, the set of values
        \[ \bigcup_{i,j \in [\ell]} \mathcal{D}_{i,j} \] is disjoint from
        \[ \{d_{F_t}(w): t\in[\ell] ,w\in F_t\}\;.\]

    \item For every $i\in[\ell]$ there exists $S_i\subseteq V(F_i)\setminus\{a_i,b_i\}$ such that $d_{F_i}(u)>d_{F_i}(v)$ for every $u\in S_i$ and $v\in V(F_i)\setminus S_i$, and $d_{F_i}(u)\neq d_{F_i}(v)$ for every $u,v\in S_i$ with $u\neq v$. Moreover, $N_{F_i}(u)\cap S_i\neq N_{F_i}(v)\cap S_i$ for every $u,v\in V(F_i)\setminus S_i$ with $u\neq v$.
    \label{itm:degsequneighb}
\end{enumerate}
\end{prop}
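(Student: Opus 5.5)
We construct the family one graph at a time, sampling $F_i\sim G(n_i,\tfrac12)$ with rapidly growing sizes $n_1\ll n_2\ll\cdots\ll n_\ell$. We then show that each required property either holds a.a.s.\ for a single random graph, or is forced by the separation of scales between different $n_i$. Since there are only finitely many properties, each holding a.a.s., some outcome satisfies all of them simultaneously for $n_i$ large. This gives $F_i$, and afterwards we choose the root edge $(a_i,b_i)$.

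\textbf{Single-graph properties.} Property \ref{itm:rigidf} is exactly Theorem~\ref{thm:rigid}. Property \ref{itm:connectivity} holds a.a.s. by a union bound over the $\binom{n}{2}$ pairs. The common neighbourhood of $u,v$ has size about $n/4$ with overwhelming probability by Chernoff. Conditioned on that neighbourhood, the induced graph is a $G(n/4,\tfrac12)$, which contains a triangle except with probability $e^{-\Omega(n^2)}$. Property \ref{itm:degsequneighb} comes from Theorem~\ref{thm:ndegrsequ}: we take $S_i$ to be the $r$ top-degree vertices. Their degrees are pairwise distinct and strictly exceed all other degrees (this uses the strict inequality $d_r>d_{r+1}$). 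Also, any two vertices outside $S_i$ have distinct traces on $S_i$.

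\textbf{Choosing the root.} We pick an edge $a_ib_i$ with both endpoints outside $S_i$ and $d(a_i)\ne d(b_i)$, oriented so that $d(a_i)<d(b_i)$; this gives \ref{itm:rootdeg}. Such an edge exists a.a.s., since degrees outside $S_i$ still take $\Theta(\sqrt n)$ distinct values. For \ref{itm:notcommonneigh}, note that a.a.s.\ every pair of vertices has symmetric difference of neighbourhoods of size about $n/2$. This follows from Chernoff and a union bound, and it gives both nonempty differences.

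\textbf{Cross-graph degree properties.} Properties \ref{itm:diffdeg}, \ref{itm:sumdeg} and \ref{itm:sumdeg2} are handled by scale separation. A.a.s.\ all degrees of $F_i$ lie in $[n_i/2-n_i^{2/3},\,n_i/2+n_i^{2/3}]$. So if $n_{i+1}\ge 100 n_i$, say, then every degree of $F_{i+1}$ exceeds every degree of $F_i$, which is \ref{itm:diffdeg}.

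For \ref{itm:sumdeg}, $\mathcal D_{i,j}$ consists of three kinds of values, concentrated near $n_i+n_j$, $n_i+n_j/2$ and $(n_i+n_j)/2$ respectively, each within $O(n_{\max}^{2/3})$. Note that $\mathcal D_{i,j}$ is not symmetric in $i,j$: the middle type uses two vertices of $F_i$ and one of $F_j$, so the sets are indexed by ordered pairs.

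It therefore suffices to choose the $n_t$ so that all such linear forms are pairwise far apart, for distinct pairs and for the different types. These forms are $n_i+n_j$, $n_i+n_j/2$, $(n_i+n_j)/2$ over pairs $(i,j)$, together with $n_t/2$ for \ref{itm:sumdeg2}. Take $n_t=N^{t}$ for a huge base $N$ (possibly with slight perturbations to separate coincident forms, e.g.\ the $i=j$ cases, where $n_i+n_i/2$ versus others). Then any two distinct forms differ by at least $cN$ times the larger scale, while the fluctuations are only $O(N^{2\ell/3})$.

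\textbf{Main obstacle.} I expect the main obstacle to be this bookkeeping. One has to check that no two of these linear combinations coincide identically, in particular comparing types across different index pairs, and ensure that the concentration windows, together with the $-1,-2$ offsets, cannot bridge the gaps.

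If some forms genuinely coincide, the fix I would try first is to choose the $n_t$ as powers of distinct large primes, or generic reals scaled up. This keeps all relevant distinct linear forms separated by a constant fraction.
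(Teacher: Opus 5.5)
\textbf{Main gap: the choice of sizes.} The problem is your scale choice for \ref{itm:diffdeg}--\ref{itm:sumdeg2}. With $n_t=N^t$ and $N\to\infty$, your claim that distinct forms are separated on the order of the larger scale is false. Two forms with the same top term differ only in lower powers of $N$, and the difference can be as small as $\Theta(N)=\Theta(n_1)$.

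Two instances:
\begin{itemize}
\item In \ref{itm:sumdeg2}, $\mathcal D_{\ell,1}$ contains $d_{F_\ell}(u)+d_{F_1}(v)\approx N^\ell/2+N/2$, and this must avoid the degrees $d_{F_\ell}(w)\approx N^\ell/2$.
\item In \ref{itm:sumdeg}, the four-degree values of $\mathcal D_{1,\ell}$ ($\approx N^\ell+N$) must avoid the two-degree values of $\mathcal D_{\ell,\ell}$ ($\approx N^\ell$).
\end{itemize}
But the degrees of $F_\ell$ fluctuate on scale at least $\sqrt{n_\ell}=N^{\ell/2}$, and your own window is $N^{2\ell/3}$. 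Both are at least of order $N$ once $\ell\ge 2$, so the windows overlap.

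This is not just a weak estimate; the property genuinely fails. A.a.s.\ all but $o(\sqrt{n_\ell})$ integers in $[n_\ell/2-K\sqrt{n_\ell},\,n_\ell/2+K\sqrt{n_\ell}]$ occur as degrees of $F_\ell$. This follows from a first/second moment count, since degree indicators of two vertices share only one edge. Hence this set meets its own translate by $d_{F_1}(v)\approx N/2$, which gives $d_{F_\ell}(u)+d_{F_1}(v)=d_{F_\ell}(w)$. So \ref{itm:sumdeg2}, and by the same argument \ref{itm:sumdeg}, fails a.a.s.\ for your family when $\ell\ge2$.

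Your worry about coincident forms is a red herring. For $N>4$ the relevant coefficient vectors are already distinct, so no perturbation is needed. The issue is magnitude, not coincidence.

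\textbf{The fix.} The sizes must be comparable: bounded ratios, with a common scale $n\to\infty$. The paper takes $n_t=9^t n$ for fixed $\ell$ and uses that a.a.s.\ every degree of $F_t$ lies in $(\tfrac12\pm\eps)n_t$. In units of $n/2$ the four kinds of values are
\begin{itemize}
\item $2\cdot 9^i+2\cdot 9^j$ (four degrees),
\item $2\cdot 9^i+9^j$ (three degrees),
\item $9^i+9^j$ (two degrees),
\item $9^t$ (a single degree),
\end{itemize}
each up to an error $O(\eps 9^\ell)$ plus $O(1/n)$ from the offsets. All base-$9$ digits here are at most $4$. So the digit pattern determines the kind of value and the unordered pair, and distinct forms differ by at least $n/2$. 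This beats the error once $\eps 9^\ell$ is small. Your fallback of ``generic reals scaled up'' is exactly this, provided the reals are fixed independently of $n$. However, you offered it for the wrong reason.

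\textbf{Two smaller points.}
\begin{itemize}
\item For \ref{itm:notcommonneigh}, a large symmetric difference does not give both one-sided differences. Apply Chernoff to each $|N(u)\setminus N(v)|$ (mean about $n/4$) separately.
\item Many distinct degree values outside $S_i$ do not by themselves give an edge with endpoints of different degree outside $S_i$. Either argue that $F_i-S_i$ is connected, or do as the paper does. Take $b_i=v_{r_i+1}$, which by Theorem~\ref{thm:ndegrsequ} has strictly the largest degree outside $S_i$. Then let $a_i$ be any neighbour of $b_i$ outside $S_i$.
\end{itemize}
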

\begin{proof}
We choose $\eps$ sufficiently small with respect to $1/\ell$ and $n$ sufficiently large with respect to $n_0$ and $1/\varepsilon$. Then for every $i\in[\ell]$ we set $n_i\coloneqq 9^in$ and sample $F_i\sim G(n_i,\tfrac{1}{2})$. Note that each of the following properties holds for every $i\in[\ell]$.
\begin{enumerate}[label=(\roman*)]
    \item By Theorem~\ref{thm:rigid} $F_i$ is rigid with probability $1-o(1)$.\label{itm:pfrigid}
    \item For every pair $\{u,v\}\subseteq V(F_i)$ let $X_{u,v}'=|N_{F_i}(u)\cap N_{F_i}(v)|$. Then $\Exp[X_{u,v}']=(\tfrac{1}{2})^{2}(n_i-2)$ and a standard Chernoff bound shows that $\Pr(X_{u,v}'\geq n_i/8)\geq 1-\exp(-\Omega(n_i))$. Furthermore, it is well-known that the probability that $G(n_i/8,\tfrac{1}{2})$ does not contain a triangle is at most $\exp(-\Omega(n_i))$, e.g. \cite[Theorem 3.9]{JanLuRu2000RG}. Therefore by the union bound the probability that all pairs of vertices contain at least one triangle in their common neighbourhood is at least $1-n_i^2\cdot 2\exp(-\Omega(n_i))\geq 1-o(1)$.\label{itm:pfpaths}
    \item For every $u,v\in V(F_i)$ with $u\neq v$ let $X_{u,v}$ be the number of vertices $w\in V(F_i)$ such that $\{u,w\}\in E(F_i)$, but $\{v,w\}\notin E(F_i)$. Then $\Exp[X_{u,v}]= \tfrac{1}{2}(1-\tfrac{1}{2})(n_i-2)$ and again a standard Chernoff bound shows that $\Pr(X_{u,v}\geq n_i/8)\geq 1-\exp(-\Omega(n_i))$ and by the union bound with probability at least $1-o(1)$ every pair of vertices $\{u,v\}\subseteq V(F_i)$ has $N_{F_i}(u)\setminus N_{F_i}(v)\neq\emptyset\neq N_{F_i}(v)\setminus N_{F_i}(u)$.\label{itm:pfdiffneighbourh}
    \item For every $v\in V(F_i)$ the expected number of neighbors of $v$ is $(n_i-1)/2$ and a standard Chernoff bound shows that with probability at least $1-\exp(-\Omega(n_i))$ we have $d_{F_i}(v)=(\tfrac{1}{2}\pm\eps)n_i$. A union bound over all vertices ensures that $d_{F_i}(v)=(\tfrac{1}{2}\pm\eps)n_i$ for every $v\in V(F_i)$ with probability $1-o(1)$.\label{itm:pfdegrees}
    \item By Theorem~\ref{thm:ndegrsequ} we have with probability $1-o(1)$ that for every $i\in[\ell]$ and $r_i=\left\lceil \tfrac{3\log(n_i)}{\log(2)}\right\rceil$ the following holds. Let $v_1^{(i)},\cdots,v_{r_i+2}^{(i)}$ be the vertices of the highest degrees and set $S_i=\{v_1^{(i)},\cdots,v_{r_i}^{(i)}\}$. Then $d_{F_i}(v_s^{(i)})>d_{F_i}(v_t^{(i)})$ for every $1\leq s<t\leq r_{i}+2$ and $N_{F_i}(u)\cap S_i\neq N_{F_i}(v)\cap S_i$ for every $\{u,v\}\subseteq V(F_i)\setminus S_i$.\label{itm:pfSiprofiles}
\end{enumerate}
Thus by a union bound over constantly many events we have that all the above properties hold simultaneously with probability $1-o(1)$ and we can fix such an outcome $\{F_1,\cdots,F_\ell\}$ of the random selection. We set $b_i\coloneqq v_{r_i+1}^{(i)}$ and choose an arbitrary $a_i\in N_{F_i}(b_i)\setminus S_i$ for every $i\in[\ell]$ which is possible, since $N_{F_i}(b_i)\setminus S_i\neq\emptyset$ by $|S_i|\leq 3\log(n_i)<(1-\eps)n_i/2\leq d_{F_i}(b_i)$. We show that the collection of edge-rooted graphs $\{(F_1,(a_1,b_1)),\cdots,(F_\ell,(a_\ell,b_\ell))\}$ has the claimed properties.

The properties \ref{itm:pfrigid}, \ref{itm:pfpaths}, and \ref{itm:pfdiffneighbourh} immediately imply \ref{itm:rigidf}, \ref{itm:connectivity}, and \ref{itm:notcommonneigh}. Property~\ref{itm:pfSiprofiles} together with $b_i=v_{r_i+1}^{(i)}$ and $a_i=v_{j}^{(i)}$, for some $j>r_i+1$, for every $i\in[\ell]$ implies \ref{itm:rootdeg}. By~\ref{itm:pfdegrees} we have $d_{F_i}(u)<(\tfrac{1}{2}+\eps)n_i=(\tfrac{1}{2}+\eps)9^in<(\tfrac{1}{2}-\eps)9^jn=(\tfrac{1}{2}-\eps)n_j=d_{F_j}(v)$ for every $i,j\in[\ell]$ with $i<j$, $u\in V(F_i)$, and $v\in V(F_j)$ implying~\ref{itm:diffdeg}.

For~\ref{itm:sumdeg}, using the conclusion of \ref{itm:pfdegrees}, $d_{F_i}(u_1)+ d_{F_i}(u_1') + d_{F_j}(v_1) + d_{F_j}(v_1') -2 = d_{F_{i'}}(u_2)+ d_{F_{i'}}(u_2') + d_{F_{j'}}(v_2) + d_{F_{j'}}(v_2') - 2$ implies that
$9^{i}+ 9^i + 9^{j} + 9^j=9^{i'}+ 9^{i'} + 9^{j'} + 9^{j'} \pm  16 \eps 9^{\ell}$. Since $8\eps 9^{\ell}<1$, this implies $9^i+9^j=9^{i'}+9^{j'}$ which in turn implies that $\{i,j\}=\{i',j'\}$. Comparing all other sums from $\mathcal{D}_{i,j}$ and $\mathcal{D}_{i',j'}$ with an almost identical argument leads to $\mathcal{D}_{i,j} \cap \mathcal{D}_{i',j'}=\emptyset$, for $\{i,j\} \neq \{i', j'\}$.

~\ref{itm:sumdeg2} is proved by a similar argument, as $d_{F_i}(u_1)+ d_{F_i}(u_1') + d_{F_j}(v_1) + d_{F_j}(v_1')-2=d_{F_t}(w)$ implies $9^i + 9^i + 9^j + 9^j =9^{t}$ leading to the contradiction $0\equiv 1$ modulo $2$. The other two cases are checked analogously.

Lastly, \ref{itm:degsequneighb} immediately follows from~\ref{itm:pfSiprofiles} while also noticing that we chose $\{a_i,b_i\}$ disjoint from $S_i$ for every $i\in[\ell]$.
\end{proof}

\section{Set-up of the Main Proof}\label{sec:set_up}

The idea for proving Theorems~\ref{thm:realsym} and~\ref{thm:realsym2} is to use the collection of graphs from Section \ref{sec:rigidgraphs} to construct another graph $G$ representing $P$, and to show that if the formal product $P$ is real-eigenvalued, then $G$ must be a positive graph. We then show that if $P$ fails to satisfy the conclusion of Theorem \ref{thm:realsym}, then the graph $G$ cannot be positive, yielding a contradiction.

\paragraph{Construction of $G$.}
Recall from the previous section that we call a pair $(F,(a,b))$, where $F$ is a graph and $(a,b)$ an orientation of an edge of $F$, an {\it edge-rooted graph}. For two edge-rooted graphs $(F_1,(a_1,b_1))$ and $(F_2,(a_2,b_2))$ we write $(F_1,(a_1,b_1))\cong (F_2,(a_2,b_2))$ if there exists an isomorphism $\varphi\in\Hom(F_1,F_2)$ with $\varphi(a_1)=a_2$ and $\varphi(b_1)=b_2$.
We often write $F$ instead of $(F,(a,b))$ when the root is clear from the context.
We additionally define $F^T=(F,(b,a))$.
Furthermore, we define the \emph{symmetrization} $F^{\sym}$ of $F$ to be the following graph. Consider two disjoint copies of $(F, (a_1, b_1))$ and $F^T = (F, (b_2, a_2))$, and identify $a_1$ and $b_2$ as well as $b_1$ and $a_2$. Then $F^\sym$ is a graph with roots $a = a_1 = b_2, b = a_2 = b_1$ which are symmetric in the sense that $F^\sym$ has an automorphism which maps $a$ to $b$ and $b$ to $a$. We identify $F^\sym$ with the edge-rooted graph $(F^\sym, (a, b))$.

Consider pairwise disjoint edge-rooted graphs $(F_1,(a_1,b_1)),\cdots,(F_k,(a_k,b_k))$. We construct a new graph \[G(F_1,\cdots,F_k)\] by considering a cycle $C_k=v_1v_2\cdots v_k$ and gluing the $F_i$ along $C_k$ by identifying each of the triples $b_{i-1}$, $a_i$ and $v_i$ where indices are considered modulo $k$. See Figure~\ref{fig:cycleglue} for an illustration. 

We also define a \emph{doubling} of $G$ which for convenience we denote by 
\begin{equation}G^2(F_1,\cdots,F_k)\coloneqq G(F_1,\cdots,F_k,F_{k+1},\cdots,F_{2k}), \label{eq:Gsquare} \end{equation} where $F_{k+i}$ is a disjoint copy of $F_i$, $i\in[k]$.

We will also make use of the following observation about the symmetrization of certain rigid graphs.

\begin{prop}\label{prop:automsym}
Let $(F,(a,b))$ be an edge-rooted graph. If $F$ is rigid and such that every two vertices of $F$ contain a copy of $K_3$ in their common neighbourhood, then $\Hom(F^{\sym},F^{\sym})=\{\id, \sigma\}$ and $\sigma$ is an automorphism with $\sigma(a)=b$ and $\sigma(b)=a$.
\end{prop}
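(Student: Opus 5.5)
Write $F^{\sym}=F'\cup F''$, where $F'$ is the copy $(F,(a_1,b_1))$ and $F''$ is the copy $(F,(b_2,a_2))$. The two copies intersect exactly in $\{a,b\}$, and $F'\cap F''$ contains the single edge $ab$. First I exhibit $\sigma$. Let $\psi':F\to F'$ and $\psi'':F\to F''$ be the two copy maps, so that $\psi'(a_F)=a$, $\psi'(b_F)=b$, $\psi''(a_F)=b$ and $\psi''(b_F)=a$. Define $\sigma=\psi''\circ(\psi')^{-1}$ on $F'$ and $\sigma=\psi'\circ(\psi'')^{-1}$ on $F''$. On $F'\cap F''$ the two definitions agree: both send $a\mapsto b$ and $b\mapsto a$. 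Hence $\sigma$ is well defined, and it is an involutive automorphism that swaps $a$ and $b$.

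It remains to show that every $\varphi\in\End(F^{\sym})$ equals $\id$ or $\sigma$. The key step is to show that $\varphi$ maps $F'$ entirely into $F'$ or entirely into $F''$. The only way a homomorphic image of $F'$ can leave one copy is through the cut $\{a,b\}$. Every vertex of $F'\setminus\{a,b\}$ is adjacent only to vertices of $F'$, and the same holds for $F''$. So I study the image of a triangle-rich structure.

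By hypothesis, any two vertices $u,v$ of $F$ have a common-neighbour triangle $xyz$. This gives a $K_5^-$ (that is, $K_5$ minus the edge $uv$) on $\{u,v,x,y,z\}$, or a full $K_5$ if $uv$ is an edge. Consider a clique $K$ in $F^{\sym}$. Its vertices outside $\{a,b\}$ must be pairwise adjacent, so they lie in a single copy. Hence every clique of $F^{\sym}$ lies inside $F'$ or inside $F''$. The image of a $K_4$ under a homomorphism is again a $K_4$, so it also lies inside one copy.

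Now take any two vertices $u,v\in F'$. They lie in a common $K_4$, namely $\{u,x,y,z\}$, which shares the triangle $\{x,y,z\}$ with the $K_4$ $\{v,x,y,z\}$. The image of $\{u,x,y,z\}$ lies in one copy $C$, and the image of $\{v,x,y,z\}$ lies in one copy $C'$. Both contain the image of the triangle $\{x,y,z\}$, which has three distinct vertices. Since $|F'\cap F''|=2$, this forces $C=C'$. Fixing $u$ and letting $v$ range over $F'$ shows that all of $\varphi(F')$ lies in a single copy. The same argument applies to $F''$.

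Case analysis. Suppose $\varphi(F')\subseteq F'$. Then $(\psi')^{-1}\circ\varphi\circ\psi'$ is an endomorphism of $F$. By rigidity it is the identity, so $\varphi|_{F'}=\id$ and in particular $\varphi(a)=a$ and $\varphi(b)=b$.

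Suppose instead $\varphi(F')\subseteq F''$. Then $(\psi'')^{-1}\circ\varphi\circ\psi'$ is an endomorphism of $F$, so it is the identity by rigidity. This gives $\varphi|_{F'}=\psi''\circ(\psi')^{-1}=\sigma|_{F'}$.

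The analogous dichotomy holds for $F''$. In each case, $\varphi$ restricted to $F''$ is either the identity or $\sigma$ restricted to $F''$. Comparing the values on $a$ and $b$ shows the two choices must match: $\id$ on $F'$ forces $\varphi(a)=a$, which rules out $\sigma$ on $F''$, and vice versa. Hence $\varphi\in\{\id,\sigma\}$.

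The main obstacle is the step showing that each copy maps into a single copy. It hinges on the fact that shared triangles cannot fit inside the two-vertex overlap $\{a,b\}$. I would verify this carefully, including the degenerate case where $u$ or $v$ equals $a$ or $b$.
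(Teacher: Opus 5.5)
Your proof is correct and follows essentially the paper's route: the common-neighbourhood triangle forces each copy of $F$ into a single copy (the paper lets that triangle land in $N(x)\cap N(y)\subseteq\{a,b\}$, while you use two $K_4$'s sharing it). Rigidity then fixes each restriction, and agreement on $\{a,b\}$ leaves only $\id$ and $\sigma$. The one point to tighten is your ``fix $u$'' step: choose $u$ with $\varphi(u)\notin\{a,b\}$, or argue pairwise as the paper does, because otherwise the copy containing $\varphi(\{u,x,y,z\})$ may depend on $v$.
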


\begin{proof}
Let $\varphi\in\Hom(F^{\sym},F^{\sym})$. We denote the two copies of $(F,(a,b))$ which are glued together in $F^{\sym}$ by $(F_1,(a,b))$ and $(F_2,(b,a))$. First note that $\varphi(V(F_1))\subseteq V(F_i)$ for some $i\in[2]$. For the sake of contradiction suppose that there exist $x\in (\varphi(V(F_1))\cap V(F_1))\setminus\{a,b\}$ and $y\in (\varphi(V(F_1))\cap V(F_2))\setminus\{a,b\}$. Then there exist $x'\in\varphi^{-1}(x)\cap V(F_1)$ and $y'\in\varphi^{-1}(y)\cap V(F_1)$ and by assumption there exists a copy of $K_3$ in $N_{F_1}(x')\cap N_{F_1}(y')$. Therefore $x$ and $y$ must also contain a copy of $K_3$ in $N_{F^\sym}(x)\cap N_{F^\sym}(y)$, but $N_{F^\sym}(x)\cap N_{F^\sym}(y)\subseteq\{a,b\}$. Analogously we have $\varphi(F_2)\subseteq V(F_j)$ for some $j\in[2]$. Now, note that $i\neq j$, as otherwise, if we denote by $\{\psi_1\}=\Hom(F,F_1)$ and $\{\psi_2\}=\Hom(F,F_2)$ the unique isomorphisms, we get that $\alpha\coloneqq\psi_i^{-1}\circ\varphi\circ\psi_1$ and $\beta\coloneqq\psi_i^{-1}\circ\varphi\circ\psi_2$ are both in $\Hom(F,F)$, but $\alpha(a)=\psi_i^{-1}(\varphi(a))\neq\psi_i^{-1}(\varphi(b))=\beta(a)$ contradicting the rigidity of $F$. Finally, since $F$ is rigid, we have that $\varphi|_{V(F_i)}$ is the unique isomorphism for each $i\in[2]$. Hence the only two possibilities for $\varphi$ are to isomorphically map $F_1$ to $F_1$ and $F_2$ to $F_2$ or $F_1$ to $F_2$ and $F_2$ to $F_1$ corresponding to the two possibilities of the assertion.
\end{proof}

\subsection{Relating that \texorpdfstring{$P$}{P} is real-eigenvalued to the positivity of \texorpdfstring{$G$}{G}.}\label{sec:positivityG}

Our first observation is that  there is a close relationship between the properties of the eigenvalues of the matrix product $P$ and the positivity of the graph $G(F_1,\dots, F_k)$ or the graph $G^2(F_1, \dots, F_k)$, if defined via a real-eigenvalued symbolic matrix product.

\begin{figure}[t]
\centering
\begin{tikzpicture}[dot/.style={circle,fill,inner sep=0pt,minimum size=6pt}]

 \draw node[regular polygon,regular polygon sides=8,draw,minimum size=5cm]
 (p8){} foreach \X in {1,...,8} {(p8.corner \X) node[dot](n-\X){}};

\begin{scope}[every edge/.style={draw=black,very thick}]
\draw (p8.corner 3) edge["$F_1$"] (p8.corner 2);
\draw (p8.corner 2) edge["$F_2$"] (p8.corner 1);
\draw (p8.corner 1) edge["$F_3$"] (p8.corner 8);
\draw (p8.corner 8) edge["$F_4$"] (p8.corner 7);
\draw (p8.corner 7) edge["$F_1$"] (p8.corner 6);
\draw (p8.corner 6) edge["$F_2$"] (p8.corner 5);
\draw (p8.corner 5) edge["$F_3$"] (p8.corner 4);
\draw (p8.corner 4) edge["$F_4$"] (p8.corner 3);
\end{scope}

\draw[very thick,dotted] (p8.corner 2) to[out=90,in=270] ($(p8.corner 2)+(0.3,0.8)$) to[out=90,in=270] ($(p8.corner 2)+(-0.2,1.3)$) to[out=90,in=180] ($(p8.corner 2)+(0,2)$) to[out=0,in=90] ($(p8.corner 1)+(0,2)$) to[out=270,in=90] ($(p8.corner 1)+(-0.4,1.5)$) to[out=270,in=90] ($(p8.corner 1)+(0.2,1)$) to[out=270,in=90] (p8.corner 1);

\draw[very thick,dotted,rotate=315] (p8.corner 1) to[out=90,in=270] ($(p8.corner 1)+(0.3,0.8)$) to[out=90,in=270] ($(p8.corner 1)+(-0.2,1.3)$) to[out=90,in=180] ($(p8.corner 1)+(0,2)$) to[out=0,in=90] ($(p8.corner 8)+(0,2)$) to[out=270,in=90] ($(p8.corner 8)+(-0.4,1.5)$) to[out=270,in=90] ($(p8.corner 8)+(0.2,1)$) to[out=270,in=90] (p8.corner 8);

\draw[very thick,dotted,rotate=270] (p8.corner 8) to[out=90,in=270] ($(p8.corner 8)+(0.3,0.8)$) to[out=90,in=270] ($(p8.corner 8)+(-0.2,1.3)$) to[out=90,in=180] ($(p8.corner 8)+(0,2)$) to[out=0,in=90] ($(p8.corner 7)+(0,2)$) to[out=270,in=90] ($(p8.corner 7)+(-0.4,1.5)$) to[out=270,in=90] ($(p8.corner 7)+(0.2,1)$) to[out=270,in=90] (p8.corner 7);

\draw[very thick,dotted,rotate=225] (p8.corner 7) to[out=90,in=270] ($(p8.corner 7)+(0.3,0.8)$) to[out=90,in=270] ($(p8.corner 7)+(-0.2,1.3)$) to[out=90,in=180] ($(p8.corner 7)+(0,2)$) to[out=0,in=90] ($(p8.corner 6)+(0,2)$) to[out=270,in=90] ($(p8.corner 6)+(-0.4,1.5)$) to[out=270,in=90] ($(p8.corner 6)+(0.2,1)$) to[out=270,in=90] (p8.corner 6);

\draw[very thick,dotted,rotate=180] (p8.corner 6) to[out=90,in=270] ($(p8.corner 6)+(0.3,0.8)$) to[out=90,in=270] ($(p8.corner 6)+(-0.2,1.3)$) to[out=90,in=180] ($(p8.corner 6)+(0,2)$) to[out=0,in=90] ($(p8.corner 5)+(0,2)$) to[out=270,in=90] ($(p8.corner 5)+(-0.4,1.5)$) to[out=270,in=90] ($(p8.corner 5)+(0.2,1)$) to[out=270,in=90] (p8.corner 5);

\draw[very thick,dotted,rotate=135] (p8.corner 5) to[out=90,in=270] ($(p8.corner 5)+(0.3,0.8)$) to[out=90,in=270] ($(p8.corner 5)+(-0.2,1.3)$) to[out=90,in=180] ($(p8.corner 5)+(0,2)$) to[out=0,in=90] ($(p8.corner 4)+(0,2)$) to[out=270,in=90] ($(p8.corner 4)+(-0.4,1.5)$) to[out=270,in=90] ($(p8.corner 4)+(0.2,1)$) to[out=270,in=90] (p8.corner 4);

\draw[very thick,dotted,rotate=90] (p8.corner 4) to[out=90,in=270] ($(p8.corner 4)+(0.3,0.8)$) to[out=90,in=270] ($(p8.corner 4)+(-0.2,1.3)$) to[out=90,in=180] ($(p8.corner 4)+(0,2)$) to[out=0,in=90] ($(p8.corner 3)+(0,2)$) to[out=270,in=90] ($(p8.corner 3)+(-0.4,1.5)$) to[out=270,in=90] ($(p8.corner 3)+(0.2,1)$) to[out=270,in=90] (p8.corner 3);

\draw[very thick,dotted,rotate=45] (p8.corner 3) to[out=90,in=270] ($(p8.corner 3)+(0.3,0.8)$) to[out=90,in=270] ($(p8.corner 3)+(-0.2,1.3)$) to[out=90,in=180] ($(p8.corner 3)+(0,2)$) to[out=0,in=90] ($(p8.corner 2)+(0,2)$) to[out=270,in=90] ($(p8.corner 2)+(-0.4,1.5)$) to[out=270,in=90] ($(p8.corner 2)+(0.2,1)$) to[out=270,in=90] (p8.corner 2);

\end{tikzpicture}
\caption{The construction of $G^2(F_1,F_2,F_3,F_4)$.}\label{fig:cycleglue}
\end{figure}

\begin{prop}\label{pro:cyclegluepositive}

Let $P = X_{\iota(1)}^{\tau(1)}\cdots X_{\iota(k)}^{\tau(k)}$ be a  symbolic matrix product with $\ell$ variables and of degree $k$ where each $\tau(i) \in \{1, T, \sym\}$. Let $(F_i,(a_i,b_i))$, $i\in[\ell]$, be edge-rooted graphs.

\begin{enumerate}
    \item If $P$ is real-eigenvalued, then the graph $G=G^2(F_{\iota(1)}^{\tau(1)},\cdots,F_{\iota(k)}^{\tau(k)})$ is positive.\label{itm:realeg}
    \item If $P$ is trace nonnegative or spectrally nonnegative, then the graph $G=G(F_{\iota(1)}^{\tau(1)},\cdots,F_{\iota(k)}^{\tau(k)})$ is positive.\label{itm:tracepos}
\end{enumerate}
\end{prop}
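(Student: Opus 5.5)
We express the homomorphism count of the glued graph as the trace of an evaluation of $P$ (or of $P^2$) on suitably chosen matrices; positivity of $G$ then follows from the assumed spectral or trace property. Fix an arbitrary edge-weighted target graph $H$ on vertex set $[n]$ with symmetric weight matrix $\beta$. For each $i\in[\ell]$ define the $n\times n$ matrix
\[(A_i)_{x,y}\coloneqq\sum_{\substack{\varphi\in\Hom(F_i,H)\\ \varphi(a_i)=x,\ \varphi(b_i)=y}} w_\varphi,\]
the weighted count of homomorphisms of $F_i$ with the root edge $(a_i,b_i)$ sent to $(x,y)$.

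First I check how the three root types translate. The matrix attached to $F_i^T=(F_i,(b_i,a_i))$ is exactly $A_i^T$. For $F_i^\sym$, gluing two copies along the roots (as defined) gives the entrywise (Hadamard) product: the count for roots $(x,y)$ is $(A_i)_{x,y}(A_i)_{y,x}$, up to the fact that the root edge now appears twice, not once. So I must be careful whether $F^\sym$ keeps both copies of the edge $ab$ or merges them into a simple edge. In the merged version I divide out one factor $\beta_{xy}$, or equivalently define $A_i$ without the root edge weight and then multiply the edge weight in once. Either way the resulting matrix $A_i^{\sym}=A_i\circ A_i^T$ (possibly adjusted by $\beta$) is a real symmetric matrix, which is a legal substitution for a symmetric variable. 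I will handle this bookkeeping cleanly by letting $A_i$ count homomorphisms of $F_i$ minus the root edge, and adding a factor $\beta_{xy}$ per cycle edge separately. Then every factor is of the form $M\circ\beta$ with $\beta$ symmetric, and the transpose and symmetry relations are preserved. This bookkeeping is the one step I expect to need care; everything else is formal.

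Next, gluing along the cycle identifies $b_{i-1}=a_i=v_i$. Since the glued pieces share only these cycle vertices, summing over the images $x_1,\dots,x_k$ of $v_1,\dots,v_k$ factorizes:
\[\hom(G(F_{\iota(1)}^{\tau(1)},\dots,F_{\iota(k)}^{\tau(k)}),H)=\sum_{x_1,\dots,x_k}\prod_{j}(B_j)_{x_j,x_{j+1}}=\tr\bigl(P(B_1',\dots,B_\ell')\bigr),\]
where $B_j$ is the matrix of $F_{\iota(j)}^{\tau(j)}$, and the substitution sends $X_i\mapsto A_i$ (with $\sym$ entries mapped to the symmetrized matrix). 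Here a single matrix per variable index is consistent: all occurrences of $X_i$ and $X_i^T$ use $A_i$ and $A_i^T$, and the symmetric case uses one symmetric matrix. This is essentially Proposition~\ref{prop:cycletr} generalized to a product of distinct matrices. Likewise the doubling \eqref{eq:Gsquare} gives $\hom(G^2,H)=\tr(P(\cdots)^2)$.

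Finally I conclude. For part~\ref{itm:tracepos}, trace nonnegativity gives $\hom(G,H)=\tr P\ge0$ directly. Spectral nonnegativity implies trace nonnegativity since the trace is the sum of the eigenvalues. For part~\ref{itm:realeg}, if the eigenvalues $\lambda_i$ of $P(\cdots)$ are real, then $\tr(P^2)=\sum\lambda_i^2\ge0$. Since $H$ was arbitrary, $t(G,H)\ge0$ for every $H$, which is positivity. A graph is also positive if it has nonnegative density with respect to the paper's convention. One small point remains: the definition quantifies over all $n$, and $H$ of any size gives matrices of size $n$, so the hypothesis applies.
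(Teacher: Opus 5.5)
Your proposal is correct and follows the paper's own route. You define the rooted weighted homomorphism-count matrices for $F_i$, $F_i^{T}$ and $F_i^{\sym}$, substitute them into $P$ to get $U$, identify $\hom(G,H)=\tr(U)$ and $\hom(G^2,H)=\tr(U^2)$, and conclude from $\tr(U)\ge 0$ or $\tr(U^2)=\sum_i\lambda_i^2\ge 0$. The only difference is that the paper avoids your root-edge bookkeeping for $F_i^{\sym}$: it defines that matrix directly as the rooted homomorphism count of $F_i^{\sym}$ and gets symmetry from the automorphism swapping the two roots.
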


\begin{proof}
We first prove the first item, i.e., we assume $P$ always has real eigenvalues.  
Let $H$ be an edge-weighted graph with vertex set $[n]$. For every $i\in[\ell]$ we define matrices $S_i,S_i^T\in\mathbb R^{n\times n}$ by setting for every $x,y\in[n]$
$$S_i(x,y)\coloneqq\sum_{\substack{\varphi\in\Hom(F_i,H)\\\varphi(a_i)=x,\varphi(b_i)=y}}w_\varphi\;\textrm{ and }\;S_i^T(x,y)\coloneqq\sum_{\substack{\varphi\in\Hom(F_i,H)\\\varphi(a_i)=y,\varphi(b_i)=x}}w_\varphi\;.$$
Similarly, define
$$S_i^\sym(x,y)\coloneqq\sum_{\substack{\varphi\in\Hom(F_i^\sym,H)\\\varphi(a_i)=x,\varphi(b_i)=y}}w_\varphi$$
and note that $S_i^\sym(x,y)=S_i^\sym(y,x)$ for all $x,y\in[n]$ because of the existence of $\psi\in \Aut(F_i^\sym)$ with $\psi(a_i)=b_i$.

Furthermore, we define the matrix $U\in\mathbb R^{n\times n}$ by setting for every $x_1,x_{k+1}\in[n]$
\[U(x_1,x_{k+1})\coloneqq\sum_{x_2,\cdots,x_{k}\in[n]}\prod_{i=1}^kS^{\tau(i)}_{\iota(i)}(x_i,x_{i+1})\;.\]
Note that then by the definition of the matrix product $U=P(S_1^{t_1},\cdots,S_{\ell}^{t_\ell})$, where for every $i\in[\ell]$ we set
$$t_i=\begin{cases}1,\textrm{ if }\tau(j)\neq\sym\textrm{ for every }j\in[k]\textrm{ with }\iota(j)=i\\ \sym,\textrm{ if }\tau(j)=\sym\textrm{ for some }j\in[k]\textrm{ with }\iota(j)=i\;.\end{cases}$$
Hence the input for $P$ is well-defined and all eigenvalues $\lambda_i$, $i\in[n]$, of $U$ are real, as $P$ is real-eigenvalued, and therefore $\tr(U^2)=\sum_{i\in[n]}\lambda_i^2\geq 0$. Finally,
\begin{align*}
\hom(G,H)&=\sum_{x_1,\cdots,x_{2k}\in[n]}\prod_{i\in[k]}S_{\iota(i)}^{\tau(i)}(x_i,x_{i+1})S_{\iota(i)}^{\tau(i)}(x_{k+i},x_{k+i+1})\\
&=\sum_{x_1,x_{k+1}\in[n]}\left(\sum_{x_2,\cdots,x_{k}\in[n]}\prod_{i\in[k]}S_{\iota(i)}^{\tau(i)}(x_i,x_{i+1})\right)\\
&\;\cdot\left(\sum_{x_{k+2},\cdots,x_{2k}\in[n]}\prod_{i\in[k]}S_{\iota(i)}^{\tau(i)}(x_{k+i},x_{k+i+1})\right)\\
&=\sum_{x_1,x_{k+1}\in[n]}U(x_1,x_{k+1})U(x_{k+1},x_1)\\
&=\tr(U^2)\geq 0\;.
\end{align*}

The second claim is proved similarly. In this case, by assumption we have $\tr(U) \geq 0$ or all eigenvalues of $U$ are nonnegative. In either case, $\tr(U) \geq 0$. Furthermore, here we consider $G=G(F_{\iota(1)}^{\tau(1)},\cdots,F_{\iota(k)}^{\tau(k)})$. A similar calculation as above gives 
\begin{align*}
\hom(G,H)&=\sum_{\substack{x_1,\dots,x_k \in [n] \\ x_{k+1} = x_1}}\prod_{i\in[k]}S_{\iota(i)}^{\tau(i)}(x_i,x_{i+1})\\
&=\sum_{x_1\in[n]}\left(\sum_{x_2,\cdots,x_{k}\in[n]}\prod_{i\in[k]}S_{\iota(i)}^{\tau(i)}(x_i,x_{i+1})\right)
=\sum_{x_1\in[n]}U(x_1,x_{1})
=\tr(U)\geq 0\;.
\end{align*}
\end{proof}

\subsection{A family of non-positive graphs}\label{sec:sunnotpos}
To show that $G$ is not a positive graph when $P$ fails to satisfy the assertions of Theorem \ref{thm:realsym}, we first prove that certain families of graphs (which will later appear as induced subgraphs of $G$) are themselves not positive. More specifically, we show that if we glue copies of a rigid graph in regular intervals along a cycle, then the resulting graph is not positive. In our main proof we will apply this to graphs given by Proposition~\ref{prop:rigidfamily}.

\begin{defi}
    Let $(F,(a,b))$ be an edge-rooted graph and $k,\ell\in\mathbb N$. Then we define the \textbf{\emph{sun graph}} $SG((F,(a,b)),k,\ell)$ to be the graph obtained from $C_{k(\ell+1)}=\{v_1,\cdots,v_{k(\ell+1)}\}$ by gluing $k$ copies of $F$ to the cycle such that $(a,b)$ is identified with $(v_{i(\ell+1)+1},v_{i(\ell+1)+2})$ for every $i\in[k]$. We denote the $i$'th copy glued on the cycle by $F^i$, its root vertices by $(a_i,b_i)=(v_{i(\ell+1)+1},v_{i(\ell+1)+2})$, and the path following this copy by $P^i=(v_{i(\ell+1)+2},\cdots,v_{(i+1)(\ell+1)+1})$, $i\in[k]$.
\end{defi}

Figure~\ref{fig:sungraph} illustrates an example of a sun graph.

\begin{figure}[h]
\centering
\begin{tikzpicture}[dot/.style={circle,fill,inner sep=0pt,minimum size=6pt}]

 \draw node[regular polygon,regular polygon sides=8,draw,minimum size=5cm]
 (p8){} foreach \X in {1,...,8} {(p8.corner \X) node[dot](n-\X){}};

\begin{scope}[every edge/.style={draw=black,very thick}]
\draw (p8.corner 3) edge["$F^1$"] (p8.corner 2);
\draw (p8.corner 2) edge["$P^1$"] (p8.corner 1);
\draw (p8.corner 1) edge["$F^2$"] (p8.corner 8);
\draw (p8.corner 8) edge["$P^2$"] (p8.corner 7);
\draw (p8.corner 7) edge["$F^3$"] (p8.corner 6);
\draw (p8.corner 6) edge["$P^3$"] (p8.corner 5);
\draw (p8.corner 5) edge["$F^4$"] (p8.corner 4);
\draw (p8.corner 4) edge["$P^4$"] (p8.corner 3);
\end{scope}

\coordinate (mid1) at ($(p8.corner 2)!0.5!(p8.corner 1)$);
\node[circle, fill=black, inner sep=1.5pt] at (mid1) {};
\coordinate (mid2) at ($(p8.corner 8)!0.5!(p8.corner 7)$);
\node[circle, fill=black, inner sep=1.5pt] at (mid2) {};
\coordinate (mid3) at ($(p8.corner 6)!0.5!(p8.corner 5)$);
\node[circle, fill=black, inner sep=1.5pt] at (mid3) {};
\coordinate (mid4) at ($(p8.corner 4)!0.5!(p8.corner 3)$);
\node[circle, fill=black, inner sep=1.5pt] at (mid4) {};

\draw[very thick,dotted,rotate=315] (p8.corner 1) to[out=90,in=270] ($(p8.corner 1)+(0.3,0.8)$) to[out=90,in=270] ($(p8.corner 1)+(-0.2,1.3)$) to[out=90,in=180] ($(p8.corner 1)+(0,2)$) to[out=0,in=90] ($(p8.corner 8)+(0,2)$) to[out=270,in=90] ($(p8.corner 8)+(-0.4,1.5)$) to[out=270,in=90] ($(p8.corner 8)+(0.2,1)$) to[out=270,in=90] (p8.corner 8);

\draw[very thick,dotted,rotate=225] (p8.corner 7) to[out=90,in=270] ($(p8.corner 7)+(0.3,0.8)$) to[out=90,in=270] ($(p8.corner 7)+(-0.2,1.3)$) to[out=90,in=180] ($(p8.corner 7)+(0,2)$) to[out=0,in=90] ($(p8.corner 6)+(0,2)$) to[out=270,in=90] ($(p8.corner 6)+(-0.4,1.5)$) to[out=270,in=90] ($(p8.corner 6)+(0.2,1)$) to[out=270,in=90] (p8.corner 6);

\draw[very thick,dotted,rotate=135] (p8.corner 5) to[out=90,in=270] ($(p8.corner 5)+(0.3,0.8)$) to[out=90,in=270] ($(p8.corner 5)+(-0.2,1.3)$) to[out=90,in=180] ($(p8.corner 5)+(0,2)$) to[out=0,in=90] ($(p8.corner 4)+(0,2)$) to[out=270,in=90] ($(p8.corner 4)+(-0.4,1.5)$) to[out=270,in=90] ($(p8.corner 4)+(0.2,1)$) to[out=270,in=90] (p8.corner 4);

\draw[very thick,dotted,rotate=45] (p8.corner 3) to[out=90,in=270] ($(p8.corner 3)+(0.3,0.8)$) to[out=90,in=270] ($(p8.corner 3)+(-0.2,1.3)$) to[out=90,in=180] ($(p8.corner 3)+(0,2)$) to[out=0,in=90] ($(p8.corner 2)+(0,2)$) to[out=270,in=90] ($(p8.corner 2)+(-0.4,1.5)$) to[out=270,in=90] ($(p8.corner 2)+(0.2,1)$) to[out=270,in=90] (p8.corner 2);

\end{tikzpicture}
\caption{The sun graph $G(F,k,\ell)$ with $k=4$, $\ell=2$, and rigid $F$.}\label{fig:sungraph}
\end{figure}

\begin{lem}\label{lem:sunnotpos}
Suppose $\ell\geq 2, k\geq 1$. Let $(F,(a,b))$ be an edge-rooted connected graph for which it holds that
\begin{enumerate}[label=(\roman*)]
    \item $F$ has no cut-vertex,\label{itm:cutvtx}
    \item every edge of $F$ is contained in a copy of $K_3$, and \label{itm:K3}
    \item there is at most one non-trivial homomorphism $\varphi\in\Hom(F,F)$ and this is an automorphism for which $\varphi(a)=b$ and $\varphi(b)=a$.\label{itm:almrig}
\end{enumerate}
Then $G=SG((F,(a,b)),k,\ell)$ is not positive if one of the following holds:
\begin{enumerate}[label=(\arabic*)]
    \item $\ell$ is odd;\label{itm:lodd}
    \item $\ell$ is even, $k\geq 2$, and $F$ is rigid.\label{itm:levenrigid}
\end{enumerate}
\end{lem}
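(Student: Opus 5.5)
The plan is to produce an explicit edge-weighted target graph $H$ with $\hom(G,H)<0$. To evaluate $\hom(G,H)$ I will use the transfer-matrix identity already used in the proof of Proposition~\ref{pro:cyclegluepositive}. For a weighted $H$ on $[n]$, let $S_F(x,y)$ be the weighted number of homomorphisms of $F$ sending $(a,b)\mapsto(x,y)$. Let $B_\ell(y,x)$ be the weighted number of walks of length $\ell$ from $y$ to $x$ in $H$. Cutting $G=SG((F,(a,b)),k,\ell)$ at the roots of the copies $F^i$ gives
\[\hom(G,H)=\tr\bigl((S_F B_\ell)^k\bigr).\]
So it suffices to design $H$ so that $M=S_FB_\ell$ is, up to a small error, a matrix whose $k$-th power has negative trace.

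The gadget is $H$ consisting of two disjoint copies $F',F''$ of $F$ with roots $(a',b'),(a'',b'')$, all edges of weight $\eps$. To these I attach internally disjoint paths of length $\ell$ joining $b'$ and $b''$ to $a'$ and $a''$, each with a prescribed weight on one edge and weight $1$ elsewhere. The first step is to show that the homomorphisms $F\to H$ of smallest order in $\eps$ are exactly the isomorphisms onto $F'$ or $F''$, together with the swapped ones in the non-rigid case. Hypotheses \ref{itm:K3} and \ref{itm:cutvtx} imply that the image of $F$ must lie inside $F'$ or $F''$, since the attached paths contain no triangles and every edge of $F$ lies on a triangle that is linked to the rest of $F$ through a $2$-connected structure. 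Hypothesis \ref{itm:almrig} then says that such a map is the identity, or the swap $a\leftrightarrow b$. Every homomorphism that is not surjective onto a copy of $F$ uses strictly fewer distinct $\eps$-edges with multiplicity, hence contributes to higher order in $\eps$ after normalizing by $\eps^{k|E(F)|}$. Here I would adapt the weighting so that these maps are genuinely lower order, for instance by giving the $F$-edges weight $1$ and the path edges weight $\eps$, or by using a large scale on $F$. Thus, to leading order, $S_F\approx E_{a'b'}+E_{a''b''}$ in the rigid case, and in case \ref{itm:almrig} with a swap one also gets the transposed entries.

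Next I compute $B_\ell$ restricted to $\{b',b''\}\times\{a',a''\}$. The dominant walks of length $\ell$ are the attached paths; backtracking walks and walks entering $F'$ or $F''$ are suppressed by the $\eps$-weights. The parity of $\ell$ matters here. Since $a$ and $b$ are adjacent, walks of length $\ell$ that run along the root edge and back can only compensate when $\ell$ has the right parity.

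\textbf{Case \ref{itm:levenrigid}.} The leading-order matrix $M$ is essentially the $2\times 2$ matrix of path weights $w_{b'a'},w_{b'a''},w_{b''a'},w_{b''a''}$, which I can choose freely. I will choose it to be a rotation by $\pi/k$; then $\tr(M^k)=2\cos\pi=-2<0$ for every $k\ge 2$.

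\textbf{Case \ref{itm:lodd}.} One copy of $F$ should suffice. In the symmetric case $S_F$ is, to leading order, the symmetric matrix $E_{ab}+E_{ba}$. For odd $\ell$, a single path of length $\ell$ from $b$ to $a$ closes the edge $ab$ into an even cycle. I would choose the signs on $H$ so that the leading $2\times2$ block of $M$ has a negative eigenvalue dominating in absolute value, or complex eigenvalues of argument near $\pi/k$, giving a negative trace for all $k\ge1$. If this turns out awkward, I would instead again use two copies and a rotation, which works for any $k\ge 2$. For $k=1$ in this case a single negatively weighted path already gives $\tr(M)<0$ directly.

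The main obstacle is the error control: proving rigorously that all non-leading homomorphisms contribute $o(1)$ relative to the main term. This covers homomorphisms folding $F$ partially onto a path, walks on $P^i$ that wander into $F'$ or $F''$, and, in the non-rigid case, the swapped embeddings, which add transposed entries to $S_F$ and could cancel the rotation. This is exactly where hypotheses \ref{itm:cutvtx}--\ref{itm:almrig} and the parity/rigidity split must be used. I would first try a two-scale weighting, with $F$-edges of weight $1$ and path edges of weight $t\to0$, and compare the leading $t$-power of each class of homomorphism.
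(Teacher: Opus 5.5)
Your framework matches the paper in substance. The identity $\hom(G,H)=\tr\bigl((S_FB_\ell)^k\bigr)$ is what the paper's aligned/misaligned count evaluates; the paper replaces the four arcs of a two-vertex digraph, whose adjacency matrix is the rotation by $\pi/k$, with copies of $F+P_\ell$. Your two-copy gadget, with weight $\eps$ on $F$-edges and $O(1)$ path weights, does prove case (2), and parity plays no role there.

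The genuine gap is case (1) when $F$ has the swap automorphism and $k\ge 2$ is even. Your one-copy gadget provably fails there. With a single $b$--$a$ path of weight $w$ and $\ell$ odd, $S_F=\eps^{e(F)}(E_{ab}+E_{ba})$ and $B_\ell|_{\{a,b\}}=w(E_{ab}+E_{ba})+O(\eps)$. So the leading block is the scalar $w\,\eps^{e(F)}I_2$, and $\hom(G,H)=\eps^{ke(F)}(2w^k+O(\eps))>0$ for every $w\neq0$. No complex eigenvalues are available; the gadget only works for odd $k$.

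For the two-copy fallback you leave open exactly the decisive question: whether the transposed entries cancel the rotation. The missing computation is as follows.
\begin{itemize}
\item The path edges form a cycle $b'a'b''a''$ of length $4\ell$ with consecutive roots at distance $\ell$. For odd $\ell$, the only $\eps$-free walks of length $\ell$ between roots are traversals of single paths; none returns to its start or reaches the opposite root.
\item Let $R=\{a',b',a'',b''\}$ and let $A$ be the weighted adjacency matrix of the root $4$-cycle. Then $\hom(G,H)=\tr\bigl((S_F|_RB_\ell|_R)^k\bigr)$ with $S_F|_RB_\ell|_R=\eps^{e(F)}(\Sigma A+O(\eps))$, where $\Sigma$ swaps $a'\leftrightarrow b'$ and $a''\leftrightarrow b''$.
\item $\Sigma A$ is block diagonal with respect to $\{a',a''\}\cup\{b',b''\}$, with blocks $W$ and $W^{T}$, where $W$ is your matrix of path weights.
\end{itemize}
Hence $\hom(G,H)=\eps^{ke(F)}\bigl(\tr(W^k)+\tr((W^T)^k)+O(\eps)\bigr)=\eps^{ke(F)}(-4+O(\eps))$. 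The swap doubles the main term rather than cancelling it. This is the paper's factor $c=2$ from the two injections $f_1,f_2$, and it is precisely where odd $\ell$ is used. For even $\ell$, backtracking along a path back to its starting root gives $O(1)$ diagonal entries of $B_\ell|_R$ that couple the two blocks.

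Your error analysis is also misdirected in ways that matter.
\begin{itemize}
\item Backtracking along path edges is not suppressed by $\eps$, since those edges have weight of order $1$. Such walks are excluded only because they cannot end at the required root in exactly $\ell$ steps: by distance in case (2), by parity in the swap case. Walks over the root edge carry a factor $\eps$ for every $\ell$, so your root-edge heuristic is not where parity enters.
\item The worry about non-surjective maps $F\to H$ is empty. Your copies $F',F''$ are vertex-disjoint, so edges on triangles force the image into one copy, and (iii) then gives $S_F$ exactly. You do not even need (i) here; the paper needs it because its copies $F^{x,1},F^{x,2}$ share the vertex $x$. In any case such maps would not be of lower order, since they still use $e(F)$ edges of weight $\eps$.
\item The remedy your last paragraph settles on, weight $1$ on $F$-edges and $t\to0$ on path edges (or a large scale on $F$), destroys the argument. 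Walks of length $\ell$ inside $F'$ would then dominate $B_\ell(b',a')$ with positive weight; for instance, mapping all of $G$ into $F'$ has weight $1$. So the leading term of $\hom(G,H)$ becomes positive.
\end{itemize}
Keep $\eps$ on $F$ and comparatively large weights on the paths. The paper's choice $(|M[x,y]|/\eps^{e(F)})^{1/\ell}$ implements the same hierarchy, normalized so that aligned homomorphisms have weight exactly $\prod_j M[i_j,i_{j+1}]$.
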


\begin{proof}
We first assume that $k\geq 2$ and show under this assumption the statements for~\ref{itm:lodd} and~\ref{itm:levenrigid}.
We introduce $\varepsilon>0$ sufficiently small such that
\begin{equation}\label{eq:epsbound}
\varepsilon<\min\left\{\frac{1}{(17\Delta(F))^{k\ell}},
\sin(\tfrac{\pi}{k})\right\}\;.
\end{equation}
We construct an edge weighted graph $H$ such that $\hom(G,H)<0$. For this we first set $\theta\coloneqq\pi/k$ and consider the matrix
\begin{equation}\label{eq:rotationm}
M\coloneqq\begin{bmatrix}
\cos(\theta) & -\sin(\theta)\\
\sin(\theta) & \cos(\theta)
\end{bmatrix}\;.
\end{equation}
Since $M$ is a rotation matrix with angle $\theta$ the eigenvalues are $e^{\pm i\pi/k}$ and therefore
\begin{equation}\label{eq:trneg}
\tr(M^{k})=(e^{i\pi/k})^{k}+(e^{-i\pi/k})^{k}=-2\;.
\end{equation}
Let $D$ be the edge-weighted digraph with loops whose adjacency matrix is $M$. In particular $D$ has two vertices and has two (weighted) self-loops.

Furthermore, we denote by $F_{(a,b),\ell}=(F,(a,b))+P_{\ell}$ the graph which consists of $F$ and a path $(w_1,\cdots,w_{\ell+1})$ of length $\ell$ glued to it by identifying $b$ and $w_1$. Now replace each directed edge $(x,y)$, $x,y\in[2]$, in $D$ by its own copy $F^{x,y}_{(a_{x,y},b_{x,y}),\ell}$ of $F_{(a,b),\ell}$ by removing $(x,y)$ and identifying $a_{x,y}$ and $x$ as well as $w_{\ell+1}$ and $y$, see Figure~\ref{fig:DH}. We call this copy $F_{(a,b),\ell}^{x,y}$ and we denote the copy of $F$ in $F_{(a,b),\ell}^{x,y}$ by $F^{x,y}$ and the copy of $P_\ell$ by $P^{x,y}$.
We call a vertex in $\{a_{x,y}\mid x,y\in[2]\}$ \emph{of type $A$} and $\{b_{x,y}\mid x,y\in[2]\}$ \emph{of type $B$}.
Furthermore, each edge in the copy of $F$ in $F_{(a,b),\ell}^{x,y}$ is assigned the weight $\varepsilon$ and each edge of the copy of $P_\ell$ is assigned the weight $(|M[x,y]|/\varepsilon^{e(F)})^{1/\ell}$ except the first one which is assigned $-(|M[x,y]|/\varepsilon^{e(F)})^{1/\ell}$ if $M[x,y]<0$. After replacing each directed edge we denote the resulting simple edge-weighted graph by $H$.
\begin{figure}[t]
\centering
\begin{tikzpicture}[>=stealth]

\node[circle, fill, inner sep=2pt] (A) at (0,0) {};
\node[circle, fill, inner sep=2pt] (B) at (4,0) {};

\coordinate (AL_right) at (0,0);
\coordinate (AL_bottom) at (-1.47,-0.857);
\coordinate (AL_left)   at (-2.94,0);
\coordinate (AL_top)    at (-1.47,0.857);
\draw[-] (AL_right) -- (AL_bottom) -- (AL_left) -- (AL_top) -- (AL_right);
\foreach \p in {AL_bottom,AL_left,AL_top} \node[circle,fill,inner sep=1.2pt] at (\p) {};

\node[right] at ($(AL_right)$) {$a_{1,2}=a_{1,1}$};
\node[below] at ($(AL_bottom)$) {$b_{1,1}$};

\coordinate (ALapex) at (0.002,-1.688);
\draw[dotted, thick] (AL_right) -- (ALapex) -- (AL_bottom);
\node at ($0.333*(AL_right)+0.333*(AL_bottom)+0.333*(ALapex)$) {$F^{1,1}$};

\node[left] at ($(AL_left)$) {$P^{1,1}$};

\coordinate (BL_left)  at (4,0);
\coordinate (BL_top)   at (5.47,0.857);
\coordinate (BL_right) at (6.94,0);
\coordinate (BL_bottom) at (5.47,-0.857);
\draw[-] (BL_left) -- (BL_top) -- (BL_right) -- (BL_bottom) -- (BL_left);
\foreach \p in {BL_top,BL_right,BL_bottom} \node[circle,fill,inner sep=1.2pt] at (\p) {};

\node[left] at ($(BL_left)$) {$a_{2,2}=a_{2,1}$};
\node[right] at ($(BL_top)$) {$b_{2,2}$};

\coordinate (BLapex) at (3.962,1.682);
\draw[dotted, thick] (BL_left) -- (BLapex) -- (BL_top);
\node at ($0.333*(BL_left)+0.333*(BL_top)+0.333*(BLapex)$) {$F^{2,2}$};

\node[right] at ($(BL_right)$) {$P^{2,2}$};

\coordinate (AB1) at (0.8,1.5);
\coordinate (AB2) at (2,1.5);
\coordinate (AB3) at (3.2,1.5);
\coordinate (ABapex) at (-0.896,1.442);
\draw[dotted, thick] (A) -- (ABapex) -- (AB1);
\node at ($0.333*(A)+0.333*(AB1)+0.333*(ABapex)$) {$F^{1,2}$};

\node[above] at ($(AB2)$) {$P^{1,2}$};

\node[above] at ($(AB1)$) {$b_{1,2}$};

\draw[-] (A) -- (AB1) -- (AB2) -- (AB3) -- (B);
\foreach \p in {AB1,AB2,AB3} \node[circle,fill,inner sep=1.2pt] at (\p) {};

\coordinate (BA1) at (3.2,-1.5);
\coordinate (BA2) at (2,-1.5);
\coordinate (BA3) at (0.8,-1.5);
\coordinate (BAapex) at (4.895,-1.442);
\draw[dotted, thick] (B) -- (BAapex) -- (BA1);
\node at ($0.333*(B)+0.333*(BA1)+0.333*(BAapex)$) {$F^{2,1}$};

\node[below] at ($(BA2)$) {$P^{2,1}$};

\node[below] at ($(BA1)$) {$b_{2,1}$};

\draw[-] (B) -- (BA1) -- (BA2) -- (BA3) -- (A);
\foreach \p in {BA1,BA2,BA3} \node[circle,fill,inner sep=1.2pt] at (\p) {};

\end{tikzpicture}
\caption{The graph $H$ after the edges of $D$ are replaced by copies of $F+P_\ell$ and $F$ is possibly symmetric in $a$ and $b$. It contains $4$ copies of $F$, denoted by $F^{x,y}$ and each followed by a path of length $\ell$ denoted by $P^{x,y}$.}\label{fig:DH}
\end{figure}

Now we call a homomorphism $\varphi\in \Hom(G,H)$ \emph{aligned} if for every $j\in[k]$ we have that
\begin{enumerate}
\item $\varphi(V(F^j))\subseteq V(F^{x,y})$ for some $x,y\in[2]$ and
\item $\varphi(V(P^j))\subseteq V(P^{x',y'})$ for some $x',y'\in[2]$.
\end{enumerate}
Otherwise we call $\varphi$ \emph{misaligned}. We then set $\Hom_{\textrm{ali}}\coloneqq\{\varphi\in \Hom(G,H)\mid \varphi\textrm{ is aligned}\}$ and $\Hom_{\textrm{mis}}\coloneqq\Hom(G,H)\setminus \Hom_{\textrm{ali}}$.

\begin{claim}\label{clm:FmapstoFinH}
    Let $\varphi\in \Hom(F,H)$. Then $\varphi$ maps $F$ isomorphically onto $F^{x,y}$ for some $x,y\in[2]$.
\end{claim}

\begin{claimproof}
Note that any edge of $F$ can only be mapped to an edge of some $F^{i,j}$ in $H$, as those are the only edges in $H$ which are contained in a triangle. Since $F$ has at least one edge, we can fix some $F^{x,y}$, $x,y\in[2]$, containing the image of this edge under $\varphi$. Then by the initial observation and, since $H[\varphi(V(F))]$ is connected, as $F$ is connected, we have that $\varphi(V(F))\subseteq V(F^{x,1})\cup V(F^{x,2})$. Suppose that $\varphi(V(F))\cap V(F^{x,1})\setminus\{x\}\neq\emptyset$ and $\varphi(V(F))\cap V(F^{x,2})\setminus\{x\}\neq\emptyset$. Then $\varphi^{-1}(x)$ is a vertex cut of $F$, but, since $F$ is connected and has no cut vertex, it follows that $|\varphi^{-1}(x)|\geq 2$. We define $\varphi'\in \Hom(F,F)$ by mapping every $v\in V(F)$ to the copy of $\varphi(v)$ in $F$. Then $\varphi'$ is a non-injective homomorphism contradicting~\ref{itm:almrig}.
Therefore $\varphi(V(F))\subseteq V(F^{x,y})$ and again by our assumption on $F$ we have that $\varphi$ is an isomorphism.
\end{claimproof}

We can now crudely upper bound the number of misaligned homomorphisms. By Claim~\ref{clm:FmapstoFinH} we know that under $\varphi\in \Hom(G,H)$ every copy of $F$ in $G$ must be mapped to a copy of $F$ in $H$, in particular, to some $F^{x,y}$ for some $x,y \in [2]$, and by~\ref{itm:almrig} there are at most two possible ways how this can be done. Hence there are at most $8^{k}$ many choices for this. Once such a choice is fixed, then for the images of the vertices of the attached path segments there are at most $\Delta(H)^{k\ell}\leq(2\Delta(F)+2)^{k\ell}$ many choices. This results into
\begin{equation}\label{eq:numberirreghoms}
    |\Hom_{\textrm{mis}}|\leq |\Hom(G,H)|\leq 8^{k}\cdot (2\Delta(F)+2)^{k\ell}\leq (17\Delta(F))^{k\ell}\;.
\end{equation}

We now consider the aligned homomorphisms.

\begin{claim}\label{clm:countalignedhom}
\begin{enumerate}[label=(\arabic*)]
\item If $F$ is rigid, then there exists a bijection $f:\Hom(C_{k},D)\rightarrow \Hom_{\textrm{ali}}$ such that $w_\psi=w_{f(\psi)}$ for every $\psi\in\Hom(C_{k},D)$.\label{itm:alighomrig}
\item If $F$ is not rigid (and $\ell$ is odd), then there exist two injections $f_1:\Hom(C_{k},D)\rightarrow \Hom_{\textrm{ali}}$ and $f_2:\Hom(C_{k},D)\rightarrow \Hom_{\textrm{ali}}$ such that $\textrm{im}(f_1)\cupdot \textrm{im}(f_2)=\Hom_{\textrm{ali}}$ and $w_\psi=w_{f_1(\psi)}=w_{f_2(\psi)}$ for every $\psi\in\Hom(C_{k},D)$.\label{itm:alighomnotrig}
\end{enumerate}
\end{claim}

\begin{claimproof}
First note that we can identify $\Hom(C_{k},D)$ with the set $[2]^{k}$ in the natural way. For~\ref{itm:alighomrig} first consider any $i=(i_1,\cdots,i_{k})\in [2]^{k}$ representing $\varphi\in\Hom(C_{k},D)$. Mapping $F^j+P^j$ onto $F_{(a,b),\ell}^{i_j,i_{j+1}}$ for every $j\in[k]$ defines a map $f(\varphi)\in\Hom_{\textrm{ali}}$ with
$$w_{f(\varphi)}=\prod_{j=1}^{k}\left(\prod_{e\in E(F)}\varepsilon\cdot\sgn(M[i_j,i_{j+1}])\cdot\prod_{e\in E(P_{\ell})}(|M[i_j,i_{j+1}]|/\varepsilon^{e(F)})^{1/\ell}\right)=\prod_{j=1}^{k}M[i_j,i_{j+1}]=w_{\varphi}\;.$$
It immediately follows from the definition that $f$ is injective, it remains to show that $f$ is also surjective. Given $\varphi\in\Hom_{\textrm{ali}}$ by Claim~\ref{clm:FmapstoFinH} we have for every $j\in[k]$ that $\varphi$ maps $F^j$ isomorphically onto some $F^{x_j,y_j}$, $x_j,y_j\in[2]$, and, since $F$ is rigid, we have that $\varphi(b_j)=b_{x_j,y_j}$ forcing that $P^j$ is mapped isomorphically onto $P^{x_j,y_j}$ and $F^{j+1}$ onto $F^{y_j,1}$ or $F^{y_j,2}$. Considering this for every $j\in[k]$ leads to a unique $i=(i_1,\cdots,i_{k})\in [2]^{k}$ representing $\psi\in\Hom(C_{k},D)$ with $f(\psi)=\varphi$.

For~\ref{itm:alighomnotrig} note that $\ell$ is odd by our assumption and $F$ is not rigid. We define $f_1$ exactly as $f$ in the proof of~\ref{itm:alighomrig}. However, since $F$ is not rigid, we also have another way of defining such a function. Given $i=(i_1,\cdots,i_{k})\in [2]^{k}$ representing $\varphi\in\Hom(C_{k},D)$. For every $j\in[k]$, we map $F^j$ onto $F^{i_{j},i_{j-1}}$ using the unique non-trivial automorphism provided by~\ref{itm:almrig} such that $f_2(\varphi)(a_j)=b_{i_j,i_{j-1}}$ and $f_2(\varphi)(b_j)=a_{i_j,i_{j-1}}$. Then we map $P^j$ isomorphically onto the reverse ordering of $P^{i_{j+1},i_j}$ so that this embedded segment ends with $f_2(\varphi)(v_{(j+1)(\ell+1)+1})=b_{i_{j+1},i_j}$. Since we end in the vertex $b_{i_{j+1},i_j}$ we can inductively repeat this for every $j\in[k]$ until we embed all of $G$ defining the injective map $f_2$. Since $f_2(\varphi)$ does not map $F^1+P^1$ onto $F_{(a,b),\ell}^{x,y}$ for any $x,y\in[2]$ and $\varphi\in\Hom(C_{k},D)$ we have $\textrm{im}(f_1)\cap \textrm{im}(f_2)=\emptyset$.

To show that $\textrm{im}(f_1)\cup \textrm{im}(f_2)=\Hom_{\textrm{ali}}$ let $\varphi\in\Hom_{\textrm{ali}}$. Since $\varphi$ is aligned and by Claim~\ref{clm:FmapstoFinH} we have for every $j\in[k]$ that $\varphi$ maps $F^j$ isomorphically onto some $F^{x,y}$, $x,y\in[2]$. By~\ref{itm:almrig} we either have that $\varphi(a_j)$ is of type $A$ and $\varphi(b_j)$ is of type $B$ or $\varphi(a_j)$ is of type $B$ and $\varphi(b_j)$ is of type $A$. In the former case we have $\varphi(V(P^j))\subseteq V(P^{x,y})$, since $\varphi$ is aligned. Furthermore, since the endpoints of $P^j$ are root vertices of copies of $F$ in $G$ their images must also be root vertices of copies of $F$ in $H$. Since $\ell$ is odd, this implies that $\varphi$ maps $P^j$ isomorphically onto $P^{x,y}$. Iterating this argument shows that $\varphi\in\textrm{im}(f_1)$. Now, in the latter case we have $\varphi(V(P^j))\subseteq V(P^{x,x})$ or $\varphi(V(P^j))\subseteq V(P^{x+1,x})$, since $\varphi$ is aligned. Arguing as before shows that then $P^j$ is mapped isomorphically onto $P^{x,x}$ or $P^{x+1,x}$, respectively, but in reverse ordering of the vertices. Iterating this leads to an image given by $f_2$.
\end{claimproof}

Thus by Claim~\ref{clm:countalignedhom} and Proposition~\ref{prop:cycletr} there exists $c\in\{1,2\}$ such that
\begin{equation}\label{eq:sumreghoms}
\sum_{\varphi\in \Hom_{\textrm{ali}}}w_{\varphi}=c\cdot\hom(C_{k},D)=c\cdot \tr(M^{k})\overset{\eqref{eq:trneg}}{=}-2c\;.
\end{equation}

Lastly, we bound the weight of a misaligned homomorphism.

\begin{claim}\label{clm:weightirreghoms}
    For every $\varphi\in \Hom_{\textrm{mis}}$ we have that $|w_{\varphi}|\leq\varepsilon$.
\end{claim}

\begin{claimproof}
Let $\varphi\in \Hom_{\textrm{mis}}$. Note that by Claim~\ref{clm:FmapstoFinH} every copy of $F$ in $G$ is mapped onto a copy of $F$ in $H$. Furthermore, consider a copy $F'_{\ell}=F'+P'_{\ell}$ of $F+P_{\ell}$ in $G$ and let $a'$ and $w_{\ell+1}'$ be the corresponding copies of $a$ and $w_{\ell+1}$. Let $\tilde{F}$ be the copy of $F$ in $H$ such that $\varphi(V(F'))=V(\tilde{F})$ and $\tilde{F}_{\ell}=\tilde{F}+\tilde{P}_{\ell}$ be the copy of $F+P_{\ell}$ in $H$ containing $\tilde{F}$ and denote the vertices corresponding to $a$ and $w_{\ell+1}$ by $\tilde{a}$ and $\tilde{w}_{\ell+1}$. We then have that $\varphi(a')=\tilde{a}$, as $F'$ is mapped isomorphically onto $\tilde{F}$, and $\varphi(w_{\ell+1}')\in\{\tilde{a},\tilde{w}_{\ell+1}\}$, because $w_{\ell+1}'$ must be mapped to a copy of $a$ (recall that by construction every copy of $w_{\ell+1}$ is identified with some copy of $a$). If $\varphi(w_{\ell+1}')=\tilde{w}_{\ell+1}$, then $F'_\ell$ is mapped isomorphically onto $\tilde{F}_\ell$. If $\varphi(w_{\ell+1}')=\tilde{a}$, then at least one edge of $P_\ell'$ is mapped on an edge of $\tilde{F}_\ell$. Since $\varphi$ is misaligned, the latter case must happen at least once. Thus all $ke(F)$ many edges of copies of $F$ in $G$ are mapped to edges of weight $\varepsilon$ in $H$, at least one edge of a copy of $P_{\ell}$ in $G$ is mapped to an edge of weight $\varepsilon$ in $H$ and at most $k\ell-1$ many of the remaining edges of $G$ are mapped to edges which do not belong to copies of $F$ and have weight at most $(\|M\|_\infty/\eps^{e(F)})^{1/\ell}$. Thus the weight of $\varphi$ is at most
\[|w(\varphi)|\leq (\eps^{e(F)})^{k}\cdot\eps\cdot\left(\left(\frac{\|M\|_\infty}{\eps^{e(F)}}\right)^{1/\ell}\right)^{k\ell-1}=\eps^{1+e(F)/\ell}\cdot \|M\|_\infty^{(k\ell-1)/\ell}\leq\eps\;,\]
and since $\|M\|_\infty\leq 1$ and $\varepsilon<1$ by \eqref{eq:epsbound}, this is at most $\varepsilon$.
\end{claimproof}

Thus by combining our observations \eqref{eq:sumreghoms}, \eqref{eq:numberirreghoms}, and Claim~\ref{clm:weightirreghoms} we conclude
\[\hom(G,H)=\sum_{\varphi\in \Hom_{\textrm{ali}}}w_{\varphi}+\sum_{\varphi\in \Hom_{\textrm{mis}}}w_{\varphi}\leq -1 +  (17\Delta(F))^{k\ell}\cdot\varepsilon<0\;,\]
which holds by \eqref{eq:epsbound}, i.e. our choice of $\varepsilon$.

It is left to show~\ref{itm:lodd} for the case $k=1$. Note that in this case $G$ consists just of one copy of $F$ whose root vertices are connected by an additional path $P$ of odd length $\ell\geq 3$.
For this we define the weighted graph $H$ to be a copy of $G$ where we assign the weight $\eps$ to every edge in $F$, $-1$ to one of the edges in $P$ and $1$ to every other edge. Since the following reasoning is very similar to the one we have seen before we keep the proof brief. First, observe that in every homomorphism from $G$ to $H$ the copy of $F$ in $G$ maps isomorphically onto the copy of $F$ in $H$. Now, the dominant maps are exactly those where the homomorphic image of the path traverses the whole path $P$ in $H$ from one root endpoint to the other. These maps cross the unique negative path edge exactly once. Every other homomorphism sends at least one path edge into the copy of $F$, hence its weight gains an extra factor $\varepsilon$.
This shows that for $\eps$ sufficiently small $\hom(G,H)$ is dominated by the isomorphisms from $G$ to $H$ which all have negative weight and therefore $\hom(G,H)<0$.
\end{proof}

\subsection{Symmetries in edge-colored oriented cycles}\label{sec:reflrot}

In our main proof we will make use of certain edge-colored oriented cycles as auxiliary graphs. In this section we show two statements which guarantee certain symmetries for those graphs, in particular rotation and reflection. As usual all indices $i\in [n]$ are considered modulo $n$.
Let $V=(v_0,\cdots,v_{n-1})$ be an ordered $n$-tuple of distinct vertices, $E=\{e_i=\{v_i,v_{i+1}\}\mid i\in[n]\}$ a set of undirected edges, and $A$ a set of colors.
Given a map $c:E\rightarrow \mathbb F_3\times A$, we then call the pair $C_n=(V,c)$ an \emph{edge-colored oriented cycle} of order $n$. For $e\in E$ we call $c(e)$ the directed color of $e$, with $c(e)_1$ the orientation of $e$, and $c(e)_2$ the undirected color of $e$. Furthermore, for convenience we define $\overline{c(e)}=(-c(e)_1,c(e)_2)$. Note that $\overline{\overline{c(e)}}=c(e)$.

Lastly, let $\varphi:\{v_0,\cdots,v_{n-1}\}\rightarrow \{v_0,\cdots,v_{n-1}\}$ be a bijection such that the following holds for every $i\in[n]$. There exists $j\in[n]$ such that $\{\varphi({v_i}),\varphi({v_{i+1}})\}=\{v_j,v_{j+1}\}$ as a set and
$$c(\{\varphi(v_i),\varphi(v_{i+1})\})=\begin{cases}c(\{v_j,v_{j+1}\})\textrm{, if }(\varphi(v_i),\varphi(v_{i+1}))=(v_j,v_{j+1})\\\overline{c(\{v_j,v_{j+1}\})}\textrm{, if }(\varphi(v_i),\varphi(v_{i+1}))=(v_{j+1},v_j)\end{cases}\;.$$
We then call $\varphi$ a \emph{color respecting automorphism} of $C_n$.

\begin{prop}\label{prop:cyclerotation}
Let $C_n=((v_0,\cdots,v_{n-1}),c)$ be an edge-colored oriented cycle of order $n$. Suppose there exist $i,j\in[n]$ with $i\neq j$ such that for all $s\in[n]$ one of the two conditions holds which might depend on the value of $s$:
\begin{align}
\label{eq:condneigh}
\begin{split}
&c(e_{i+s-1})=c(e_{j+s-1})\textrm{ and } c(e_{i+s})=c(e_{j+s})\textrm{ or }\\
&c(e_{i+s-1})=\overline{c(e_{j+s})}\textrm{ and } c(e_{i+s})=\overline{c(e_{j+s-1})}\;.
\end{split}
\end{align}
Then one of the following two cases holds.
\begin{enumerate}
    \item $\varphi(v_{x})=v_{x+j-i}$, $x\in[n]$, defines a color respecting automorphism of $C_n$.
    \item $\varphi(v_{x})=v_{x+2}$, $x\in[n]$,  defines a color respecting automorphism of $C_n$. In this case, for any edge $e_x$ in $C_n$, it satisfies $c(e_x)_1 = 0$.
\end{enumerate}
\end{prop}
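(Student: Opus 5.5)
The plan is to reduce everything to relations between single edge colours and then split according to which alternative of \eqref{eq:condneigh} holds at each $s$. Put $d\coloneqq j-i\neq 0$. For each $t\in[n]$ consider the \emph{direct relation} $\mathrm{(D}_t\mathrm{)}$: $c(e_{i+t})=c(e_{j+t})$. If $\mathrm{(D}_t\mathrm{)}$ holds for every $t$, then $\varphi(v_x)=v_{x+d}$ sends each $e_x$ to $e_{x+d}$ with the same orientation and the same colour, so it is a color respecting automorphism and we are in case~1. Hence assume $\mathrm{(D}_{t_0}\mathrm{)}$ fails for some $t_0$. The edge $e_{i+t_0}$ occurs in the condition for $s=t_0$ (as its second edge) and for $s=t_0+1$ (as its first edge). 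The first alternative of \eqref{eq:condneigh} at either of these $s$ would assert $\mathrm{(D}_{t_0}\mathrm{)}$, so the reversed alternative must hold at both $s=t_0$ and $s=t_0+1$. I will call $s$ an \emph{$R$-index} if the reversed alternative holds at $s$ and the direct one does not.

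At an $R$-index $s$ we have the two relations
\[
c(e_{i+s-1})=\overline{c(e_{j+s})}\quad\text{and}\quad c(e_{i+s})=\overline{c(e_{j+s-1})}.
\]
For two consecutive $R$-indices $s,s+1$, the second relation at $s$ and the first relation at $s+1$ both express $c(e_{i+s})$ through the $j$-side. Comparing them and using that $\overline{\,\cdot\,}$ is an involution gives $c(e_{j+s-1})=c(e_{j+s+1})$, i.e.\ the colours have local period $2$ on the $j$-side. Symmetrically, they also have local period $2$ on the $i$-side. The heart of the proof is then a propagation argument. Starting from the run of $R$-indices containing $t_0,t_0+1$, I will show that the run cannot terminate. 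At the boundary of a maximal run, an index $s$ where the direct alternative holds sits next to an $R$-index $s+1$. There the direct equalities at $s$ combine with the reversed equalities at $s+1$ to give $c(e_{j+s})=\overline{c(e_{j+s+1})}$ and $c(e_{i+s})=\overline{c(e_{i+s+1})}$. Together with the local period-$2$ relations and the failure of $\mathrm{(D}_{t_0}\mathrm{)}$, these should force either a contradiction or the conclusion that the direct alternative also holds inside the run. That would reduce us to case~1 after all, or show that every $s$ is an $R$-index.

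In the all-$R$ situation the period-$2$ relations hold globally, so $c(e_x)=c(e_{x+2})$ for all $x$. Consequently $\varphi(v_x)=v_{x+2}$ is a color respecting automorphism. It remains to get $c(e_x)_1=0$, which I will do by parity. From $c(e_{i+t})=\overline{c(e_{j+t+1})}$ and period $2$:
\begin{itemize}
\item if $n$ is odd, period $2$ forces all colours to be equal, say to $c_0$, and then $c_0=\overline{c_0}$;
\item if $n$ is even and $d+1$ is even, then $c(e_{i+t})=\overline{c(e_{i+t})}$ for every $t$;
\item if $n$ is even and $d$ is even, then $c(e_{j+t})=c(e_{i+t})$ by period $2$, so $\mathrm{(D}_t\mathrm{)}$ holds for all $t$ and we are in case~1 anyway.
\end{itemize}
Since $c(e)=\overline{c(e)}$ means $c(e)_1=-c(e)_1$ in $\mathbb F_3$, we get $c(e)_1=0$. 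This gives case~2 with the extra orientation claim.

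The main obstacle is the boundary-of-run propagation in the mixed case, where direct and reversed alternatives alternate along $s$. The local equalities there are easy to write down, but one has to chase them around the whole cycle to make sure the mixed pattern either collapses to case~1 or becomes globally reversed. I expect to handle it by an induction on $s$ that carries the invariant ``period $2$ plus $c(e_x)=\overline{c(e_{x+1})}$'' across each boundary. Should that invariant fail to propagate, the fallback is a direct case analysis of the length-$3$ windows $e_{i+s-1},e_{i+s},e_{i+s+1}$.
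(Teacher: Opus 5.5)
\textbf{There is a genuine gap: the central propagation step is never carried out.}

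Much of your framework is sound. The failure of $(\mathrm{D}_{t_0})$ does force the reversed alternative at $s=t_0$ and $s=t_0+1$. Consecutive reversed indices do give period $2$ on both sides, and your boundary computation $c(e_{i+s})=\overline{c(e_{i+s+1})}$ is correct. The all-reversed parity endgame also works. What is missing is the proof that the run of $R$-indices cannot terminate, which you yourself flag as the heart of the argument.

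Both outcomes you anticipate there are off:
\begin{itemize}
\item ``The direct alternative also holds inside the run, reducing to case~1'' cannot happen. Case~1 says exactly that $(\mathrm{D}_t)$ holds for every $t$, and $(\mathrm{D}_{t_0})$ fails.
\item The invariant you plan to carry, ``period $2$ plus $c(e_x)=\overline{c(e_{x+1})}$'', is the negation of what actually holds along the run. Propagating it cannot succeed; it is the relation that has to be refuted.
\end{itemize}

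The missing observation is the following. Combine the failure of $(\mathrm{D}_{t_0})$ with the reversed relation at $s=t_0$, which says $c(e_{i+t_0-1})=\overline{c(e_{j+t_0})}$. This gives
$\overline{c(e_{i+t_0-1})}=c(e_{j+t_0})\neq c(e_{i+t_0})$, that is, $c(e_{i+t_0})\neq\overline{c(e_{i+t_0-1})}$.

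With this inequality the propagation is immediate.
\begin{itemize}
\item Along a run of reversed indices, the $i$-side colours are $2$-periodic.
\item The $j$-side colours are the bars of the opposite-parity $i$-side colours.
\item Hence the direct alternative at the next index would require some adjacent pair on the $i$-side to satisfy $c(e_{i+x})=\overline{c(e_{i+x+1})}$.
\item The relation $a\neq\overline{b}$ is symmetric, because the bar is an involution. So by periodicity this is equivalent to $c(e_{i+t_0})=\overline{c(e_{i+t_0-1})}$, which is false.
\end{itemize}
Therefore the reversed alternative is forced at every $s$, by induction around the cycle.

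In your boundary language: the relation $c(e_{i+s})=\overline{c(e_{i+s+1})}$ you derive at a left boundary, transported by period $2$ to the pair $(e_{i+t_0-1},e_{i+t_0})$, contradicts the inequality. The symmetric right boundary, reversed at $s$ and direct at $s+1$, gives $c(e_{i+s-1})=\overline{c(e_{i+s})}$ and is ruled out the same way; you would need to treat it as well.

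This forward induction, driven by the invariant $c(e_{s_0})\neq\overline{c(e_{s_0-1})}$, is exactly the paper's Claim. Once you add it, your plan closes and essentially coincides with the paper's proof.
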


\begin{proof}
W.l.o.g. we may assume that $i=0$ and $1\leq j\leq n-1$. If for every $s\in [n]$ we have that $c(e_{s})=c(e_{j+s})$, then the assertion follows. So suppose for the sake of contradiction that there is $s_0\in[n]$ such that $c(e_{s_0})\neq c(e_{j+s_0})$. Then, by \eqref{eq:condneigh} for $s=s_0$, it holds that $c(e_{s_0})=\overline{c(e_{j+s_0-1})}$ and $c(e_{s_0-1})=\overline{c(e_{j+s_0})}$. In particular we have $c(e_{s_0})\neq\overline{c(e_{s_0-1})}$ as we have assumed $c(e_{s_0})\neq c(e_{j+s_0})$.\\

\begin{claim}\label{clm:norotcolalt}
    For every $k\in[n]$ we have that    $c(e_{s_0+2k})=c(e_{s_0})=\overline{c(e_{j+s_0+2k+1})}$ and $c(e_{s_0+2k+1})=c(e_{s_0-1})=\overline{c(e_{j+s_0+2k})}$.
\end{claim}

\begin{claimproof}
We show the statement by induction. First observe that it holds for $k=0$. Indeed by \eqref{eq:condneigh} for $s=s_0+1$ and $c(e_{s_0})\neq c(e_{j+s_0})$ we have $c(e_{s_0})=\overline{c(e_{j+s_0+1})}$ and $c(e_{s_0+1})=\overline{c(e_{j+s_0})}=c(e_{s_0-1})$.
Now suppose it holds for $k\in [n]$. This means $c(e_{s_0+2k+1})=c(e_{s_0-1})$ and $c(e_{j+s_0+2k+1})=\overline{c(e_{s_0})}$. By \eqref{eq:condneigh} for $s=s_0+2(k+1)$ and $c(e_{s_0})\neq\overline{c(e_{s_0-1})}$ we have $c(e_{s_0+2(k+1)})=\overline{c(e_{j+s_0+2k+1})}=c(e_{s_0})$ and $c(e_{j+s_0+2(k+1)})=\overline{c(e_{s_0+2k+1})}=\overline{c(e_{s_0-1})}$. Finally considering \eqref{eq:condneigh} with $s=s_0+2(k+1)+1$ and again $c(e_{s_0})\neq\overline{c(e_{s_0-1})}$ we have $c(e_{s_0+2(k+1)+1})=c(e_{s_0-1})$ and $c(e_{j+s_0+2(k+1)+1})=\overline{c(e_{s_0})}$.
\end{claimproof}

If $j$ is odd Claim~\ref{clm:norotcolalt} implies $c(e_{s_0+j-1})=c(e_{s_0})$. However our assumption   from the beginning is that $c(e_{s_0})=\overline{c(e_{j+s_0-1})}$. Therefore $c(e_{s_0}) =\overline{c(e_{s_0})}$. By a similar argument, Claim~\ref{clm:norotcolalt} implies  $c(e_{s_0+2k+1})=c(e_{s_0-1})=\overline{c(e_{1+s_0+2k})}$ for any integer $k$, and therefore $c(e_{s_0-1}) =\overline{c(e_{s_0-1})}$. Therefore the coloring of $C_n$ is alternating between $c(e_{s_0})$ and $c(e_{s_0-1})$, and for both of these
$c(e_{s_0-1})_1 = c(e_{s_0})_1 = 0$.

If $j$ is even Claim~\ref{clm:norotcolalt} implies that $c(e_{s_0+j-1})=c(e_{s_0-1})$ which with the observations from the beginning shows that $c(e_{s_0})=\overline{c(e_{j+s_0-1})}=\overline{c(e_{s_0-1})}$ contradicting $c(e_{s_0})\neq\overline{c(e_{s_0-1})}$.
\end{proof}

\begin{prop}\label{prop:cyclereflection}
Let $C_n=((v_0,\cdots,v_{n-1}),c)$ be an edge-colored oriented cycle of order $n$. Suppose there exists $i\in[n]$ such that for every $s\in[n]$ we have
\begin{align}
\label{eq:condneigh2}
\begin{split}
&c(e_{s-1})=c(e_{i-s-1})\textrm{ and } c(e_{s})=c(e_{i-s})\textrm{ or }\\
&c(e_{s-1})=\overline{c(e_{i-s})}\textrm{ and } c(e_{s})=\overline{c(e_{i-s-1})}\;.
\end{split}
\end{align}
If there exist no directed colors $c_0$ and $c_1$ such that $c(e_{2k})=c_0$ and $c(e_{2k+1})=c_1$ for every $k\in[n]$, then $\varphi(v_{x})=v_{i-x}$ defines a color respecting automorphism of $C_n$.
\end{prop}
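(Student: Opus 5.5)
The plan is to mimic the proof of Proposition~\ref{prop:cyclerotation}: show that the ``twisted'' alternative in \eqref{eq:condneigh2} must hold for every $s$, and then read off that $\varphi$ respects colors.

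First I translate the target into a statement about colors. Under $\varphi(v_x)=v_{i-x}$, the edge $e_x=\{v_x,v_{x+1}\}$ is sent to $\{v_{i-x},v_{i-x-1}\}=e_{i-x-1}$, traversed in reverse. So $\varphi$ is a color respecting automorphism exactly when
\[
c(e_x)=\overline{c(e_{i-x-1})}\qquad\text{for every } x\in[n].
\]
This identity for $x=s$ is precisely the second conjunct of the second alternative of \eqref{eq:condneigh2} at index $s$. The first conjunct, $c(e_{s-1})=\overline{c(e_{i-s})}$, is the same identity for $x=s-1$. Hence it suffices to show that the second alternative of \eqref{eq:condneigh2} holds for every $s\in[n]$.

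Suppose, for contradiction, that there is an $s_0$ where the second alternative fails. Then the first holds there, namely $c(e_{s_0-1})=c(e_{i-s_0-1})$ and $c(e_{s_0})=c(e_{i-s_0})$. Moreover, at least one of the two twisted equalities fails at $s_0$. Shifting $s_0$ by one if necessary, I may assume the failing one is $c(e_{s_0})\neq\overline{c(e_{i-s_0-1})}$, which I call $(\ast)$.

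Next I propagate by induction, as in Claim~\ref{clm:norotcolalt}.
\begin{itemize}
\item At $s_0+1$, the twisted alternative would require $c(e_{s_0})=\overline{c(e_{i-s_0-1})}$, contradicting $(\ast)$. So the first alternative holds: $c(e_{s_0})=c(e_{i-s_0-2})$ and $c(e_{s_0+1})=c(e_{i-s_0-1})$.
\item At $s_0+2$, the twisted alternative would need $c(e_{s_0+1})=\overline{c(e_{i-s_0-2})}$. Substituting the equalities just obtained, this becomes $c(e_{i-s_0-1})=\overline{c(e_{s_0})}$, which is again equivalent to the negation of $(\ast)$.
\end{itemize}
Repeating this, each step's failure of the twisted case reduces to $(\ast)$. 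I expect the main obstacle to be this bookkeeping: making sure that at each step the twisted case really reduces back to $(\ast)$ rather than to some new inequality. I would verify it via an explicit claim paralleling Claim~\ref{clm:norotcolalt}, stated for indices of a fixed parity.

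The outcome is that the first alternative holds for all $s$, so $c(e_t)=c(e_{i-t})$ and $c(e_t)=c(e_{i-t-2})$ for every $t$. Combining the two gives $c(e_{i-t})=c(e_{i-t-2})$ for all $t$, so the coloring is $2$-periodic. If $n$ is odd, $2$-periodicity forces a constant coloring, which is the case $c_0=c_1$. In either case there exist directed colors $c_0,c_1$ with $c(e_{2k})=c_0$ and $c(e_{2k+1})=c_1$ for all $k$. This contradicts the hypothesis, so the twisted alternative holds everywhere, and by the first paragraph $\varphi$ is a color respecting automorphism.
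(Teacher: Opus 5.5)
Your proposal is correct and is essentially the paper's argument. Like the paper, you reduce the claim to the twisted identity $c(e_x)=\overline{c(e_{i-x-1})}$ for all $x$, suppose it fails at some $s_0$, and propagate by induction from \eqref{eq:condneigh2} until the coloring is forced to alternate, which contradicts the hypothesis. The bookkeeping you were worried about does go through: if the untwisted alternative holds at $s_0,\dots,s_0+m$, then $c(e_{i-s_0-j})$ is $2$-periodic in $j$ for $0\le j\le m+1$, so the twisted alternative at $s_0+m+1$ always reduces back to the negation of $(\ast)$.
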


\begin{proof}
If for every $s\in [n]$ we have that $c(e_{s})=\overline{c(e_{i-s-1})}$, then $\varphi$ is a color respecting automorphism of $C_n$, as for every $x\in[n]$ we have $(\varphi(v_x),\varphi(v_{x+1}))=(v_{i-x},v_{i-x-1})$ and
$$c(\{\varphi(v_x),\varphi(v_{x+1})\})=c(\{v_{i-x},v_{i-x-1}\})=c(e_{i-x-1})=\overline{c(e_x)}\;.$$
So suppose for the sake of contradiction that there is $s_0\in[n]$ such that $c(e_{s_0})\neq \overline{c(e_{i-s_0-1})}$. We show the following claim.

\begin{claim}\label{clm:reflcyclecontr}
    For every $k\in[n]$ we have that    $c(e_{s_0+2k})=c(e_{s_0})=c(e_{i-s_0-2k})$ and $c(e_{s_0+2k+1})=c(e_{s_0-1})=c(e_{i-s_0-2k-1})$.
\end{claim}

\begin{claimproof}
We show the statement by induction. First observe that it holds for $k=0$. Indeed by \eqref{eq:condneigh2} with $s=s_0$ and since $c(e_{s_0})\neq\overline{c(e_{i-s_0-1})}$ we have that $c(e_{s_0-1})=c(e_{i-s_0-1})$ and $c(e_{s_0})=c(e_{i-s_0})$. In particular this implies that $\overline{c(e_{s_0-1})}=\overline{c(e_{i-s_0-1})}\neq c(e_{s_0})$. Furthermore, by \eqref{eq:condneigh2} for $s=s_0+1$ and since $c(e_{s_0})\neq \overline{c(e_{s_0-1})}=\overline{c(e_{i-s_0-1})}$ we have $c(e_{s_0})=c(e_{i-s_0-2})$ and $c(e_{s_0+1})=c(e_{i-s_0-1})=c(e_{s_0-1})$. Now suppose the statement holds for $k\in [n]$. By \eqref{eq:condneigh2} with $s=s_0+2k+1$ and since $c(e_{s_0+2k})=c(e_{s_0})\neq\overline{c(e_{s_0-1})} =\overline{c(e_{i-s_0-2k-1})}$ we have that $c(e_{i-s_0-2k-2})=c(e_{s_0+2k})=c(e_{s_0})$. Similarly, by \eqref{eq:condneigh2} with $s=s_0+2k+2$ and since $c(e_{s_0+2k+1})=c(e_{s_0-1})\neq\overline{c(e_{s_0})} =\overline{c(e_{i-s_0-2k-2})}$ we have that $c(e_{s_0+2k+2})=c(e_{i-s_0-2k-2})=c(e_{s_0})$ and $c(e_{i-s_0-2k-3})=c(e_{s_0+2k+1})=c(e_{s_0-1})$. Finally, by \eqref{eq:condneigh2} with $s=s_0+2k+3$ and since $c(e_{s_0+2k+2})=c(e_{s_0})\neq\overline{c(e_{s_0-1})} =\overline{c(e_{i-s_0-2k-3})}$ we have that $c(e_{s_0+2k+3})=c(e_{i-s_0-2k-3})=c(e_{s_0-1})$.
\end{claimproof}

Thus $C_{n}$ is by Claim~\ref{clm:reflcyclecontr} colored with the at most two alternating directed colors $c(e_{s_0})$ and $c(e_{s_0-1})$ contradicting the assumption.
\end{proof}

\section{Proof of the Main Theorems}\label{sec:mainproof}

The following lemma shows that if copies of the graphs from Proposition~\ref{prop:rigidfamily} are glued along a cycle and the resulting graph is positive, then the resulting cyclic structure must either admit a reflection or exhibit a strong form of periodicity. This forms the core for proving our main theorems.

\begin{lem}\label{lem:auxgraphsym}
Let $k,\ell\in\mathbb N$ with $1\leq\ell\leq 2k$ and $k>1$, $\iota:[2k]\rightarrow[\ell]$, and $\tau:[2k]\rightarrow\{1,T,\sym\}$. Let $(F_1,(a_1,b_1)),\ldots,(F_\ell,(a_\ell,b_\ell))$ be edge-rooted graphs given by Proposition~\ref{prop:rigidfamily}. Let $G=G(H_1,\ldots,H_{2k})$ be the graph obtained by cyclically gluing graphs $H_i$, where each rooted graph $(H_i,(v_i,v_{i+1}))$ is isomorphic to $F_{\iota(i)}^{\tau(i)}$. Consider $W=H_1\ldots H_{2k}$ as a formal word in the letters $F_r$, $F_r^T$, and $F_r^\sym$, $r\in[\ell]$. If $G$ is positive, then one of the following holds.
\begin{enumerate}
\item A cyclic permutation of $W$ can be written as $LL^T$.\label{itm:reflection}
\item A cyclic permutation of $W$ can be written as 
\begin{enumerate}
    \item $(F_r^t)^{2k}$, for some $r\in[\ell]$, $t\in\{1,T,\sym\}$, or
    \item $(F_r^{t_1}F_s^{t_2})^{k}$, for some $r,s\in[\ell]$, $t_1,t_2\in\{1,T,\sym\}$.
\end{enumerate}\label{itm:oneortwovar}
\item The word $W$ is cyclically periodic and consists only of symmetric letters $F_r^\sym$, $r\in[\ell]$. More precisely, for every $q\in[2k]$ there exist $t\in[\ell]$, a divisor $s\mid (2k)$, and a formal word $U_q$ of even length at least $2$ formed by letters $F_r^\sym$, $r\in[\ell]$, such that, after cyclically permuting $W$ so that the sequence starts with $H_q$, the sequence has the form 
$$(H_qU_q)^s=(F_t^\sym U_q)^s=\underbrace{(F_t^\sym U_q)(F_t^\sym U_q)\cdots(F_t^\sym U_q)}_{s\text{ times}}\;.$$
\label{itm:periodic}
\end{enumerate}
\end{lem}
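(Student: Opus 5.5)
The plan is to study the walk-tree partition of $G$ and apply Lemma~\ref{lem:indwalktrcl} to pass to positive induced subgraphs. From that partition we will either read off a symmetry of the edge-colored oriented cycle underlying $W$, or reach a sun graph, which Lemma~\ref{lem:sunnotpos} rules out.

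\textbf{Encoding.} We encode $W$ as an edge-colored oriented cycle $C_{2k}$ on the cycle vertices $v_1,\dots,v_{2k}$. The edge $e_i=\{v_i,v_{i+1}\}$ receives undirected color $\iota(i)$ and orientation $+1$, $-1$ or $0$ according to whether $\tau(i)=1$, $T$ or $\sym$. A reflection of this colored cycle that respects colors (Proposition~\ref{prop:cyclereflection}) is exactly a cyclic rewriting $W\eqshift LL^T$, provided the reflection axis passes through vertices. A color-respecting rotation (Proposition~\ref{prop:cyclerotation}) gives periodicity.

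\textbf{Step 1: walk-trees determine the local pattern.} First we compute degrees in $G$. A non-cycle vertex of a copy of $F_r$ keeps its $F_r$-degree. A cycle vertex $v_i$ is shared by $H_{i-1}$ and $H_i$, and its degree is a sum of root degrees; the form of the sum depends on whether the neighbouring letters are symmetrized. Symmetrized letters contribute two root degrees each, giving the $-2$, $-1$ and $0$ correction terms appearing in $\mathcal{D}_{i,j}$.

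By properties \ref{itm:sumdeg} and \ref{itm:sumdeg2} of Proposition~\ref{prop:rigidfamily}, cycle vertices are distinguished from non-cycle vertices by degree. Moreover, the degree of $v_i$ determines the unordered pair $\{\iota(i-1),\iota(i)\}$, and by \ref{itm:rootdeg} and \ref{itm:diffdeg} it also determines the root types. Combining Propositions~\ref{prop:isoimplsameneighiso} and \ref{prop:isoimplsamendeg} with \ref{itm:degsequneighb}, which makes the vertices of each $F_r$ individually recognizable through their neighbourhood profiles in $S_r$, we show the following. If $R_G(v_i)\cong R_G(v_j)$, then the two-edge neighbourhoods around $v_i$ and $v_j$ carry the same directed colors, either in the same order or reversed and barred. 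These are exactly conditions \eqref{eq:condneigh} and \eqref{eq:condneigh2}. Propagating along the cycle through neighbours with isomorphic walk-trees, one of the two conditions holds for all $s$, with $j=i+s$ or $j=i-s$ respectively.

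\textbf{Step 2: the case analysis.} If some $v_i$ shares its walk-tree class with some $v_j$, $j\neq i$, we invoke Proposition~\ref{prop:cyclerotation} or Proposition~\ref{prop:cyclereflection}.
\begin{itemize}
\item A reflection through vertices yields case~\ref{itm:reflection}.
\item The alternating exception of Proposition~\ref{prop:cyclereflection} yields case~\ref{itm:oneortwovar}.
\item A rotation with $\sym$-free letters or alternating pattern also yields case~\ref{itm:oneortwovar}.
\item A reflection whose axis passes through edge midpoints forces the middle letter to satisfy $H=H^T$. Hence it is $\sym$ and can be absorbed, giving case~\ref{itm:reflection} or producing the periodic structure in case~\ref{itm:periodic}.
\end{itemize}

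Otherwise, the rotation classes are the only identifications. Take the walk-tree class $S$ of a root vertex together with the classes of a chosen copy $F_t$ and the intervening path vertices. Then $G[S]$ is positive by Lemma~\ref{lem:indwalktrcl}, and it is a sun graph $SG(F_t^{(\sym)},s,\ell')$.

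For the sun graph we apply Lemma~\ref{lem:sunnotpos}. Hypothesis (iii) follows from \ref{itm:rigidf} together with Proposition~\ref{prop:automsym}, and hypotheses (i) and (ii) follow from \ref{itm:connectivity}. The lemma gives a contradiction unless $\ell'$ is even and $F_t$ is symmetrized, or $s=1$. These residual situations yield exactly the description in case~\ref{itm:periodic}: every letter is symmetric, and each $q$ generates a period $(F_t^\sym U_q)^s$ with $|U_q|$ even.

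\textbf{Main obstacle.} I expect the main obstacle to be Step~1, rigorously showing that isomorphic walk-trees force matching colored neighbourhoods. This must also control $\sym$ letters, where the root vertices are indistinguishable. I would first handle the non-$\sym$ case using degrees alone, and then treat symmetrized copies via the $\overline{\cdot}$ convention with orientation $0$.
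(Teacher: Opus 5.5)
Your architecture matches the paper's: the walk-tree partition, a degree/neighbourhood-degree claim that pins down the two letters around each cycle vertex (your Step~1 is essentially the paper's first claim), Propositions~\ref{prop:cyclerotation} and~\ref{prop:cyclereflection}, and a sun-graph contradiction via Lemmas~\ref{lem:indwalktrcl} and~\ref{lem:sunnotpos}. The problem is Step~2. Two of your bullets draw conclusions that a symmetry of the coloured cycle cannot give on its own, and your case split never sends those situations to the sun-graph lemma, which is where the real work happens.

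\emph{Midpoint reflections.} A symmetrized letter on the axis cannot be ``absorbed''. For example, $W=F_1^{\sym}F_2F_3^{\sym}F_2^T$ (with $2k=4$) has a reflection through the midpoints of its two $\sym$ letters, yet satisfies none of the three outcomes; only positivity excludes it. The missing argument runs as follows. Normalize the reflection to $v_x\mapsto v_{1-x}$. It is an automorphism of $G$ swapping $v_0$ and $v_1$, so $[v_0]=[v_1]$, and hence no copy of $H_0$ in its class is reversed. Once you know these copies are whole and equally spaced (see below), the classes of $V(H_0)$ together with all cycle vertices induce $SG(H_0,2k/(m+1),m)$. Now split on $m$:
\begin{itemize}
\item Odd $m\ge 3$ is ruled out by Lemma~\ref{lem:sunnotpos}(1). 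This includes $m=2k-1$, where $H_0$ is alone in its class, and that is exactly what kills the example above.
\item $m\in\{0,1\}$ gives Outcome~2.
\item For even $m\ge 2$, the period $(F^{\sym}Q)^{2k/(m+1)}$ and the reflection give $Q=Q^T=PP^T$. Since $m+1$ is odd, $2k/(m+1)$ is even, so $W\eqshift LL^T$. Equivalently, composing the midpoint reflection with the rotation by the odd number $m+1$ yields a reflection through vertices.
\end{itemize}
So this bullet is settled only through the sun lemma, and it ends in Outcome~1 or~2.

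\emph{Rotations and the sun step.} Your claim that a rotation with $\sym$-free letters yields Outcome~2 is false. The word $(F_1F_2F_3)^2$ has a rotation by $3$ and no $\sym$ letters, yet no outcome holds. It is excluded only because the class of $H_1$ induces $SG(F_1,2,2)$, which is non-positive by Lemma~\ref{lem:sunnotpos}(2). Meanwhile your ``otherwise'' branch (no two cycle vertices in one class) admits no non-trivial rotation, so it can only produce $SG(\cdot,1,2k-1)$.

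The split must be made per letter, as in the paper. For each $q$, either the class of $H_q$ contains a reversed copy (the reflection branch) or it does not. In the latter case you must show the class is a union of \emph{whole} copies of $H_q$ at positions forming a subgroup $d\mathbb Z/2k\mathbb Z$. This needs an analysis of internal vertices, not just your root-vertex analysis:
\begin{itemize}
\item the class of an internal vertex consists of copies of it;
\item one internal vertex of $H_i$ in the class pulls in all of $H_i$, by the connectivity in Proposition~\ref{prop:rigidfamily}(2);
\item private root neighbours (property~(3)) carry the class, together with its orientation, to the roots;
\item each same-oriented copy yields a rotation of $G$ (Proposition~\ref{prop:cyclerotation}, unless $W$ alternates two $\sym$ letters, which is Outcome~2), and rotations preserve classes, which gives the subgroup.
\end{itemize}

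Finally, your list of surviving sun graphs is wrong. The case $s=1$ is not residual, since its path length $2k-1$ is odd. The residual cases are path lengths $0$ and $1$, which lie outside the lemma's range $\ell\ge 2$ and give Outcome~2. Outcome~3 requires running the argument at \emph{every} $q$ and finding, each time, even spacing $\ge 2$ with $H_q$ symmetrized.
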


\begin{proof}
We call a vertex $u\in V(H_i)\setminus\{v_i,v_{i+1}\}$ an \emph{internal vertex} of $H_i$ and $\{v_i,v_{i+1}\}$ the \emph{root vertices} of $H_i$.
Since by Proposition~\ref{prop:rigidfamily}~\ref{itm:rigidf} each $F_i$ is rigid for every $i\in[\ell]$, if $F_i\cong F_j$ for some $i,j\in[\ell]$, then there exists a unique isomorphism in $\Hom(F_i,F_j)$. By a similar reasoning, if $F_i^\sym \cong F_j^\sym$, then this implies $F_i \cong F_j$, and by Proposition~\ref{prop:automsym} there are exactly two mappings in $\Hom(F_i^\sym, F_j^\sym)$ and both are isomorphisms.
For $i \in [2k]$, let $\psi_i$ denote the unique isomorphism from $H_i$ to $F_{\iota(i)}$ if $\tau(i)\neq\sym$; otherwise, let $\psi_i$ be an isomorphism from $H_i$ to $F_{\iota(i)}^\mathrm{sym}$. Note that in the latter case, the choice of isomorphism is arbitrary. For vertices $u,v\in F_{\iota(i)}^{t}$, $t\in\{1,\sym\}$, we write $u\equiv v$ if $u=v$ or $t=\sym$ and $u=\sigma(v)$, where $\sigma\in\Aut(F_{\iota(i)}^{\sym})\setminus\{id\}$. Two vertices $u \in V(H_i)$ and $v \in V(H_j)$ are called \emph{copies} if $\psi_i(u)\equiv \psi_j(v)$.

By construction, vertices in $G$ that are not copies are distinguished by their degrees or the degrees of their neighbors.

\begin{claim}\label{clm:degreepropG}
\begin{enumerate}[label=(\roman*)]
    \item $d_G(u)\neq d_G(v)$ for every internal vertex $u\in V(G)$ and every root vertex $v\in V(G)$.\label{itm:introotdeg}
    \item $d_G(u)\neq d_G(v)$ or $D_G(u)\neq D_G(v)$ for all internal vertices $u,v\in V(G)$ which are not copies.\label{itm:intneighdeg}
    \item For two root vertices $v_i,v_j\in V(G)$, if $d_G(v_i)= d_G(v_j)$ and $D_G(v_i)= D_G(v_j)$, then one of the two following cases holds.
    \label{itm:rootvrt}
\begin{enumerate}
        \item $H_{i-1}\cong H_{j-1}$ and $H_{i}\cong H_j$ and $\psi_{i-1}(v_{i-1})\equiv\psi_{j-1}(v_{j-1})$, $\psi_{i-1}(v_{i})\equiv\psi_{j-1}(v_{j})$, $\psi_{i}(v_{i})\equiv\psi_{j}(v_{j})$, and $\psi_{i}(v_{i+1})\equiv\psi_{j}(v_{j+1})$ or
        \item $H_{i-1}\cong H_{j}$ and $H_{i}\cong H_{j-1}$ and $\psi_{i-1}(v_{i-1})\equiv\psi_{j}(v_{j+1})$, $\psi_{i-1}(v_{i})\equiv\psi_{j}(v_{j})$, and $\psi_{i}(v_{i+1})\equiv\psi_{j-1}(v_{j-1})$, $\psi_{i}(v_{i})\equiv\psi_{j-1}(v_{j})$.
    \end{enumerate}
\end{enumerate}
\end{claim}

\begin{claimproof}
For~\ref{itm:introotdeg} let $u$ be an internal vertex. Then $d_G(u)=d_{F_t}(u')$ for some $t\in[\ell]$ and $u'\in V(F_t)$ by construction of $G$.
Let $v$ be a root vertex. Then by the construction of $G$ (and the definition of a symmetrization) we have $d_G(v)=d_{F_r}(v_1)+d_{F_s}(v_2)$ for some $r,s\in[\ell]$ and $v_1\in V(F_r)$, $v_2\in V(F_s)$, or $d_G(v)=d_{F_r}(v_1)+ d_{F_r}(v_2) + d_{F_s}(v_3) - 1$ for some $r,s\in[\ell]$ and $v_1, v_2\in V(F_r)$, $v_3\in V(F_s)$, or  $d_G(v)=d_{F_r}(v_1)+ d_{F_r}(v_2) + d_{F_s}(v_3) + d_{F_s}(v_4) - 2$ for some $r,s\in[\ell]$ and $v_1, v_2\in V(F_r)$, $v_3, v_4\in V(F_s)$.
Then by Proposition~\ref{prop:rigidfamily}~\ref{itm:sumdeg2} it follows that $u$ cannot have the same degree as any root vertex in $G$.

For \ref{itm:intneighdeg} let $u\in V(H_i)$ and $v\in V(H_j)$ be internal vertices which are not copies. Then $d_G(u)$ is the degree of an internal vertex in some $F_s$ and $d_G(v)$ is the degree of an internal vertex in some $F_{s'}$. If $F_s \not\cong F_{s'}$ (which also implies $H_i \not\cong H_j$), then, since $u,v$ are internal vertices, we have by Proposition~\ref{prop:rigidfamily}~\ref{itm:diffdeg} that $d_G(u) \neq d_G(v)$. So suppose that $H_i\cong H_j$ and both correspond to the same $F_s$. Let $u'$ be the copy of $u$ in $F_s$ and $v'$ the copy of $v$ in $F_s$, then $u'\neq v'$, since $u$ and $v$ are not copies. If $u'\in S_s$, then by Proposition~\ref{prop:rigidfamily}~\ref{itm:degsequneighb} we have that $d_{F_s}(u')\neq d_{F_s}(v')$. So we may assume that $u',v'\notin S_s$ and by  Proposition~\ref{prop:rigidfamily}~\ref{itm:degsequneighb} we have that $N_{F_s}(u')\cap S_s\neq N_{F_s}(v')\cap S_s$. Hence w.l.o.g. we may assume that there exists $w'\in S_s$ such that $d_{F_s}(w')\in D_{F_s}(u')$ and $d_{F_s}(w')\notin D_{F_s}(v')$. Let $w$ be the copy of $w'$ in $H_i$. Note that by construction $D_{F_s}(u')\setminus\{d_{F_s}(a_{i}),d_{F_s}(b_{i})\}\subseteq D_{G}(u)$. Since by Proposition~\ref{prop:rigidfamily}~\ref{itm:degsequneighb} the elements of $S_s$ are the vertices with the highest degrees in $F_s$, $d_{G}(w)=d_{F_s}(w')> d_{F_s}(a_i), d_{F_s}(b_i)$ and therefore $d_{G}(w)\in D_{G}(u)$. Furthermore, $D_{G}(v)\subseteq D_{F_s}(v')\cup\{d_G(v_{i}),d_G(v_{i+1})\}$ and by \ref{itm:introotdeg} we have that $d_G(w)\neq d_G(v_{i}),d_G(v_{i+1})$ and therefore $d_G(w)=d_{F_s}(w')\notin D_{G}(v)$ showing that $D_{G}(u)\neq D_G(v)$.

For~\ref{itm:rootvrt} let $v_i$ and $v_j$ be root vertices such that $d_G(v_i)=d_G(v_j)$ and $D_G(v_i)=D_G(v_j)$. Since $d_G(v_i)=d_{H_{i-1}}(v_i)+d_{H_{i}}(v_i)$ and $d_G(v_j)=d_{H_{j-1}}(v_j)+d_{H_{j}}(v_j)$, by Proposition~\ref{prop:rigidfamily}~\ref{itm:sumdeg} we have for some $s,t\in[\ell]$ and $\alpha(s), \alpha(t) \in \{1, \sym\}$ that
\begin{enumerate}
    \item $H_{i-1}\cong F_s^{\alpha(s)}\cong H_{j-1}$ and $H_{i}\cong F_t^{\alpha(t)}\cong H_{j}$ or
    \item $H_{i-1}\cong F_s^{\alpha(s)}\cong H_{j}$ and $H_{i}\cong F_t^{\alpha(t)}\cong H_{j-1}$.
\end{enumerate}  (Note that $F_s^T \cong F_s$ so here we only include $1, \sym$ for the values of $\alpha(s), \alpha(t)$).

We first consider the case $s\neq t$. Recall the definition of $S_x$ in Proposition \ref{prop:rigidfamily}. Define $D_{S_x}=\{d_{F_x}(u)\mid u\in S_x\}$, $x\in[\ell]$, and note that $x\neq y\in[\ell]$ implies that $D_{S_x}\cap D_{S_y}=\emptyset$ by Proposition~\ref{prop:rigidfamily}~\ref{itm:diffdeg} and~\ref{itm:degsequneighb}.

We also set $S=\bigcup_{x\in[\ell]}S_x$ and $D_S=\bigcup_{x\in[\ell]}D_{S_x}$. Furthermore, note that $\{d_G(v_i)\mid i\in[2k]\}\cap D_S=\emptyset$ by~\ref{itm:introotdeg}. Thus for $u\in V(G)$ and $x\in[\ell]$ we have $d_G(u)\in D_{S_x}$ if and only if $u$ is an internal vertex and a copy of a vertex from $S_x$. In the first situation where $H_{i-1}\cong F_s^{\alpha(s)}\cong H_{j-1}$ and $H_{i}\cong F_t^{\alpha(t)}\cong H_{j}$, since $s\neq t$, we have
\begin{align*}
D_G(v_i)\cap D_S&=(D_{F_{s}^{\alpha(s)}}(\psi_{i-1}(v_i))\cap D_{S_{s}})\cupdot (D_{{F_{t}^{\alpha(t)}}}(\psi_{i}(v_i))\cap D_{S_{t}})\textrm{ and}\\
D_G(v_j)\cap D_S&=(D_{F_{s}^{\alpha(s)}}(\psi_{j-1}(v_j))\cap D_{S_{s}})\cupdot (D_{F_{t}^{\alpha(t)}}(\psi_{j}(v_j))\cap D_{S_{t}})\;.
\end{align*}

Since $D_G(v_i)\cap D_S=D_G(v_j)\cap D_S$, Proposition~\ref{prop:rigidfamily}~\ref{itm:diffdeg} and~\ref{itm:degsequneighb} imply that $\{\psi_{i-1}(v_i),\psi_{i}(v_i)\}=\{\psi_{j-1}(v_j),\psi_{j}(v_j)\}$ after treating copies as the same vertex, leading to the two cases of the assertion. The second situation $H_{i-1}\cong F_s^{\alpha(s)}\cong H_{j}$ and $H_{i}\cong F_t^{\alpha(t)}\cong H_{j-1}$ is proved using the same argument.

Now, we consider the case $s=t$. If $\alpha(t)=\alpha(s)=\sym$, then $H_{i-1}\cong H_{i}\cong H_{j-1}\cong H_{j}\cong F_s^{\sym}$ and it holds for each of those graphs that their two roots are copies implying the assertion. Hence we may assume that $\alpha(s)\neq\sym$. By Proposition~\ref{prop:rigidfamily}~\ref{itm:rootdeg} we have that $d_{F_s}(a_s)<d_{F_s}(b_s)$ and therefore that $d_{F_s}(a_s)+d_{F_s}(a_s)$, $d_{F_s}(a_s)+d_{F_s}(b_s)$ and $d_{F_s}(b_s)+d_{F_s}(b_s)$ are distinct. Thus $d_G(v_i)=d_G(v_j)$ implies again $\{\psi_{i-1}(v_i),\psi_{i}(v_i)\}=\{\psi_{j-1}(v_j),\psi_{j}(v_j)\}$ leading to the two cases of the assertion.
\end{claimproof}

We call an automorphism $\varphi\in \Aut(G)$ a \emph{rotation} if there exists $t\in[2k]$ such that $\varphi(v_i)=v_{i+t}$ and $\varphi(V(H_i))=V(H_{i+t})$ for every $i\in[2k]$. Furthermore, we call $\varphi\in \Aut(G)$ a \emph{reflection} if there exists $t\in[2k]$ such that $\varphi(v_{t-i})=v_{t-1+i}$ and $\varphi(V(H_{t-i}))=V(H_{t-1+i})$ for every $i\in[2k]$.

For each $q\in[2k]$ we will analyze the union of the walk tree partition classes of the vertices of $H_q$, i.e. 
\[\mathcal{C}_q=\{u\in V(G)\mid \textrm{ there exists }v\in V(H_q)\textrm{ s.t. }R_G(u)\cong R_G(v)\}\;. \]
Additionally, we sometimes consider $\mathcal{C}_q'=\mathcal{C}_q\cup\{v_i\mid i\in[2k]\}$.
We make the following observation that $\mathcal{C}_q$ only consists of copies of $H_q$ in $G$.

\begin{claim}\label{clm:unionofH1}
Let $q\in[2k]$. There exists $J_q\subseteq [2k]$ such that $\mathcal{C}_q=\bigcup_{i\in J_q}V(H_i)$ and $H_i\cong H_q$ for every $i\in J_q$. In particular, for each $i\in J_q$ either $v_i$ is a copy of $v_q$ and $v_{i+1}$ is a copy of $v_{q+1}$ or $v_i$ is a copy of $v_{q+1}$ and $v_{i+1}$ is a copy of $v_q$.
\end{claim}

\begin{claimproof}
For simplicity we may w.l.o.g. assume that $q=1$.
Let $u\in\mathcal{C}_1$. Suppose first that $u$ is the internal vertex of some $H_i$, $i\in[2k]$. Then $R_G(v)\cong R_G(u)$ for some $v\in V(H_1)$ and by Proposition~\ref{prop:isoimplsamendeg} and Claim~\ref{clm:degreepropG}~\ref{itm:introotdeg} and~\ref{itm:intneighdeg} we have that $v$ and $u$ are copies and in particular $H_1\cong H_i$. By Proposition~\ref{prop:isoimplsameneighiso} there must be a bijection between $\{R_G(z)\mid z\in N_G(v)\}$ and $\{R_G(z)\mid z\in N_G(u)\}$ such that the corresponding pairs are isomorphic as rooted trees. By Proposition~\ref{prop:isoimplsamendeg} and Claim~\ref{clm:degreepropG}~\ref{itm:introotdeg} and~\ref{itm:intneighdeg} this bijection must map internal vertices of $H_1$ to their copy in $H_i$ and therefore the internal vertices of $H_i$ which are in $N_G(u)$ are also in $\mathcal{C}_1$. Since by Proposition~\ref{prop:rigidfamily}~\ref{itm:connectivity} $H_i-\{v_i,v_{i+1}\}$ is connected and by repeating this argument, this implies that all internal vertices of $H_i$ are contained in $\mathcal{C}_1$. By Proposition~\ref{prop:rigidfamily}~\ref{itm:notcommonneigh} there is an internal vertex $u'\in V(H_i)$ which is adjacent to $v_i$ but not to $v_{i+1}$. By the previous observation we have that $R_G(v')\cong R_G(u')$, where $v'$ is the copy of $u'$ in $H_1$. Therefore $v'$ is adjacent to the copy of $v_i$ in $H_1$ which we denote by $v_x$, where $x\in\{1,2\}$. Again by Proposition~\ref{prop:isoimplsameneighiso} there must be a bijection between $\{R_G(z)\mid z\in N_G(v')\}$ and $\{R_G(z)\mid z\in N_G(u')\}$ such that the corresponding pairs are isomorphic as rooted trees. By Proposition~\ref{prop:isoimplsamendeg} and Claim~\ref{clm:degreepropG}~\ref{itm:introotdeg} and~\ref{itm:intneighdeg} this bijection must map $v_i$ to $v_x$ (since those are the only root vertices in their respective neighbourhoods) implying that $v_i\in[v_x]\subseteq\mathcal{C}_1$. Repeating this argument for a neighbour of $v_{i+1}$ which is not a neighbour of $v_i$ leads to $v_{i+1}\in[v_y]\subseteq \mathcal{C}_1$, where $y\in\{1,2\}\setminus\{x\}$ and that $v_{i+1}$ is a copy of $v_y$.

Now suppose that $v_i\in\mathcal{C}_1$ is a root vertex. By Proposition~\ref{prop:isoimplsamendeg} and Claim~\ref{clm:degreepropG}~\ref{itm:introotdeg} $v_i$ must be in the walk-tree partition class of a root vertex and therefore $v_i\in [v_x]$ for some $x\in\{1,2\}$. Now consider an internal vertex $u\in N_{H_1}(v_x)$. By Proposition~\ref{prop:isoimplsameneighiso} there must be a bijection between $\{R_G(z)\mid z\in N_G(v_i)\}$ and $\{R_G(z)\mid z\in N_G(v_x)\}$ such that the corresponding pairs are isomorphic as rooted trees. By Proposition~\ref{prop:isoimplsamendeg} and Claim~\ref{clm:degreepropG}~\ref{itm:introotdeg} $u$ must be mapped to an internal vertex which is a copy of $u$. This means there is an internal vertex $u'\in V(H_{i-1})$ such that $R(u)=R(u')$ and in particular $H_{i-1}\cong H_1$ or there is an internal vertex $u'\in V(H_{i})$ such that $R(u)=R(u')$ and in particular $H_{i}\cong H_1$. In any case repeating the above argument we get that $V(H_{i-1})\subseteq\mathcal{C}_1$ or $V(H_{i})\subseteq\mathcal{C}_1$ and the claim follows.
\end{claimproof}

We also need a claim which helps us to analyze the structure of $\mathcal{C}_q$ if all copies of $H_q$ in it are oriented the same way.

\begin{claim}\label{lem:rotation}
Let $q\in[2k]$.
\begin{enumerate}[label=(\roman*)]
\item Suppose that $G$ is not composed of alternating graphs $H_1 = F_a^\sym$ and $H_2 = F_b^\sym$. If $j\in J_q$ with $v_{j}\in[v_q]$ and $v_{j+1}\in[v_{q+1}]$, then $\varphi:v_{x}\mapsto v_{j-q+x}$ induces a rotation of $G$.\label{itm:onerot}
\item Suppose that for every $i\in J_q$ we have $v_{i}\in[v_q]$ and $v_{i+1}\in[v_{q+1}]$.
 Then  $G[\mathcal{C}_q']$, where $\mathcal{C}_q'=\mathcal{C}_q\cup\{v_i \mid i\in[2k]\}$ is a sun graph. In other words, $G[\mathcal{C}_q']\cong SG(F_{\iota(q)}^{\tau(q)},N_q,m_q)$ for some $m_q\in\mathbb N_0$ with $2k-1\geq m_q\geq 0$ and $(1+m_q)\mid 2k$ and $N_q=2k/(m_q+1)$.\label{itm:manyrotsun}
 \end{enumerate}
\end{claim}

\begin{claimproof} 
For simplicity we assume w.l.o.g. that $q=1$ and set $J=J_1$ and $\mathcal{C}=\mathcal{C}_q$.
We first make the following three observations.

\begin{subclaim}\label{claim:int}
Let $i,j\in[2k]$.
If $R_G(v_i)\cong R_G(v_j)$ and $R_G(v_{i+1})\cong R_G(v_{j+1})$, then  $R_G(v_{i+s})\cong R_G(v_{j+s})$ for every $s\in[2k]$.
\end{subclaim}

\begin{subclaimproof}
Since $R_G(v_{i+1})\cong R_G(v_{j+1})$, by Proposition~\ref{prop:isoimplsameneighiso} there must be a bijection between $\{R_G(z)\mid z\in N_G(v_{i+1})\}$ and $\{R_G(z)\mid z\in N_G(v_{j+1})\}$ such that the corresponding pairs are isomorphic as rooted trees. Since $v_{i}$ and $v_{i+2}$ are the only root vertices in $N_G(v_{i+1})$ and $v_{j}$ and $v_{j+2}$ the only root vertices in $N_G(v_{j+1})$ we have by Proposition~\ref{prop:isoimplsamendeg} and Claim~\ref{clm:degreepropG}~\ref{itm:introotdeg} that $R_G(v_{i})\cong R_G(v_{j})$ and $R_G(v_{i+2})\cong R_G(v_{j+2})$ or $R_G(v_{i+2})\cong R_G(v_{j})$ and $R_G(v_{j+2})\cong R_G(v_{i})$.
Note that in the latter case we also get together with the assumption $R_G(v_{i})\cong R_G(v_{j})$ that $R_G(v_{i+2})\cong R_G(v_{j+2})$. Repeating this argument inductively shows that $R_G(v_{i+s})\cong R_G(v_{j+s})$ for every $s\in[2k]$.
\end{subclaimproof}

\begin{subclaim}
    \label{clm:wtpc_rot}
Suppose that $i,j\in[2k]$ with $i\neq j$ and such that  $R_G(v_{i+s})\cong R_G(v_{j+s})$ for every $s\in[2k]$. Then either $\varphi:v_x\mapsto v_{j-i+x}$ induces a rotation of $G$ mapping $H_i$ to $H_j$, or $G$ is alternating between two graphs $H_1 = F_a^\sym$ and $H_2 = F_b^\sym$. In either case, $G$ has a non-trivial rotation.
\end{subclaim}

\begin{subclaimproof}
We want to deduce that $H_{i+s}\cong H_{j+s}$ for every $s\in[2k]$. For this we define an auxiliary oriented cycle $\widetilde{C}$ with vertex set $V(\widetilde{C})=(v_1,\cdots,v_{2k})$. The ``orientation" of each edge $e_i=\{v_i,v_{i+1}\}$, $i\in[2k]$, is given by $\tau(i)$ where $``1", ``T", ``\sym"$ corresponds to $1, -1, 0 \in \mathbb{F}_3$ respectively. Furthermore we color each edge $e_i$ with $F_{\iota(i)}$. Now by Proposition~\ref{prop:isoimplsamendeg} and Claim~\ref{clm:degreepropG}~\ref{itm:rootvrt} the conditions of Proposition~\ref{prop:cyclerotation} are fulfilled and one of the following two situations will happen. (1) $\varphi:v_{s}\mapsto v_{s+j-i}$ is a color preserving automorphism of $\widetilde{C}$ and by definition of $\widetilde{C}$ then also induces a rotation of $G$; (2) $G$ is alternating between two symmetrized graphs (not necessarily distinct).
\end{subclaimproof}

\begin{subclaim}\label{clm:rot_wtpc}
If there exists a rotation $\varphi\in \Aut(G)$ mapping $H_i$ to $H_j$, then for every pair of copies $x\in V(H_i)$ and $y\in V(H_j)$ we have $R_G(x)\cong R_G(y)$.
\end{subclaim}

\begin{subclaimproof}
Given $\varphi$, note that $\varphi|_{V(H_i)}$ is an isomorphism mapping vertices from $H_i$ to their copy in $H_j$. We define $\psi:V(R_G(x))\rightarrow V(R_G(y))$ by $\psi((w_1\cdots w_r))\mapsto (\varphi(w_1)\cdots\varphi(w_r))$ for every walk $(w_1\cdots w_r)$ starting at $x$. Since $\varphi(x)=y$ and $\varphi$ is a homomorphism, it follows that $\psi$ is well-defined, i.e. $(\varphi(w_1)\cdots\varphi(w_r))$ is a walk starting at $y$. Furthermore, from the fact that $\varphi$ is an isomorphism it follows that $(w_1\cdots w_r)$ is a walk extending $(w_1\cdots w_{r-1})$ if and only if $(\varphi(w_1)\cdots \varphi(w_r))$ is a walk extending $(\varphi(w_1)\cdots \varphi(w_{r-1}))$, and therefore $\psi$ is also an isomorphism of the rooted trees $R_G(x)$ and $R_G(y)$.
\end{subclaimproof}

We can now turn to proving Claim~\ref{lem:rotation}.
Item~\ref{itm:onerot} follows immediately from combining Observations~\ref{claim:int} and~\ref{clm:wtpc_rot}.

For~\ref{itm:manyrotsun} we consider the elements of $J$ modulo $2k$ and w.l.o.g. we may assume by relabeling the vertices that $0\in J$. If $J=\{0\}$, then $G[\mathcal C']\cong SG(F_{\iota(1)}^{\tau(1)},1,2k-1)$, and there is nothing to prove. Thus we may assume that $|J|\geq2$.
Choose $d=\min J\setminus\{0\}$ and note that by the assumption on $J$ for~\ref{itm:manyrotsun}, Observation~\ref{claim:int} and Observation~\ref{clm:wtpc_rot} there is a rotation of $G$ by $d$. This implies by Observation~\ref{clm:rot_wtpc} that also $2d\in J$ and iterating this argument gives that $sd$ mod $2k$ is in $J$ for every $s\in\mathbb N$. Furthermore, we have that $d\mid a$ for every $a\in J$, as otherwise $a=qd+r$ for some $1\leq r\leq d-1$ and then by the assumption and Observation~\ref{clm:wtpc_rot} we also have $r\equiv a+(2k-q)d$ in $J$ contradicting the minimality of $d$. Thus overall we have that $J=(d\mathbb Z)/(2k\mathbb Z)$. Therefore $G[\mathcal{C}']$ is a sun graph with equally spread out attached copies of $F_{\iota(1)}^{\tau(1)}$ as desired, i.e. $G[\mathcal{C}']\cong SG(F_{\iota(1)}^{\tau(1)},2k/(m+1),m)$ for some $m\geq 0$.
\end{claimproof}

And we need a last claim for the case that $\mathcal{C}_q$ contains two copies of $H_q$ with different orientation. Then we will find a reflection of $G$.

\begin{claim}\label{lem:case2}
    Let $q\in[2k]$.
    Suppose that there exists $i\in J_q$ such that $v_i\in[v_{q+1}]$ and $v_{i+1}\in[v_q]$. Then $W = (F_r^t)^{2k}$ for some $r\in[\ell]$ and $t\in\{1,T,\sym\}$ or $\varphi_{i}:v_x\mapsto v_{q+i+1-x}$ induces a reflection of $G$.
\end{claim}
\begin{claimproof}
For simplicity we assume w.l.o.g. that $q=1$.
We first make the following observation.
\begin{subclaim}\label{clm:oppositeinduc}
$R_G(v_{1+s})\cong R_G(v_{i+1-s})$ for all $s\in [2k]$.
\end{subclaim}

\begin{subclaimproof}
 Since $R_G(v_{i})\cong R_G(v_{2})$, by Proposition~\ref{prop:isoimplsameneighiso} there must be a bijection between $\{R_G(z)\mid z\in N_G(v_{i})\}$ and $\{R_G(z)\mid z\in N_G(v_{2})\}$ such that the corresponding pairs are isomorphic as rooted trees. Denote by $\{v_{i-1},v_{i+1},x_1\cdots,x_{d_1}\}$ the neighbours of $v_{i}$ and by $\{v_{1},v_{3},y_1\cdots,y_{d_2}\}$ the neighbours of $v_{2}$. Since by Claim~\ref{clm:degreepropG}~\ref{itm:introotdeg} $d_G(x_s)\neq d_G(v_{1}),d_G(v_{3})$ for every $s\in[d_1]$ and $d_G(y_s)\neq d_G(v_{i-1}),d_G(v_{i+1})$ for every $s\in[d_2]$, we have by Proposition~\ref{prop:isoimplsamendeg} that
$$R_G(v_{i-1})\cong R_G(v_{1})\textrm{ and }R_G(v_{i+1})\cong R_G(v_{3})\textrm{ or}$$
$$R_G(v_{i+1})\cong R_G(v_{1})\textrm{ and }R_G(v_{i-1})\cong R_G(v_{3})\;.$$ In the latter case we immediately have $R_G(v_{3})\cong R_G(v_{i-1})$. In the former case we also get together with $R_G(v_{1})\cong R_G(v_{i+1})$ that $R_G(v_{3})\cong R_G(v_{i-1})$. Repeating this argument inductively shows that $R_G(v_{1+s})\cong R_G(v_{i+1-s})$ for every $s\in[2k]$.
\end{subclaimproof}

Now, we define an auxiliary oriented cycle $\widetilde{C}$ with vertex set $(v_1,\cdots,v_{2k})$ and $c(\{v_i,v_{i+1}\})=(\tau(i),F_{\iota(i)})$ where we slightly abuse notation and let the values of $\tau$: $``1", ``T", ``\sym"$ correspond to $1, -1, 0 \in \mathbb{F}_3$, respectively.
If there exist $c'$ and $c''$ such that $c(e_{2i+1})=c'$ and $c(e_{2i})=c''$ for every $i\in[k]$ then we are in Outcome~\ref{itm:oneortwovar} of the lemma.

Hence we may assume that there are no such $c'$ and $c''$ and also by Claim~\ref{clm:oppositeinduc}, Proposition~\ref{prop:isoimplsamendeg} and Claim~\ref{clm:degreepropG}~\ref{itm:rootvrt} the conditions of Proposition~\ref{prop:cyclereflection} are fulfilled and $\varphi:v_{x}\mapsto v_{1+i+1-x}$ is a color preserving automorphism of $\widetilde{C}$ and by definition of $\widetilde{C}$ also induces a reflection of $G$.
\end{claimproof}

We can now perform the main analysis of $G$ which we split into two cases. First, we will assume that for every $q\in[2k]$ all the copies of $H_q$ in $\mathcal{C}_q$ are oriented the same way which will lead to rotational symmetries which we can exploit for a contradiction. Then we will consider the case that some class $\mathcal{C}_q$ contains copies of opposite orientation which will lead us to a reflection of $G$ showing that $W$ is a palindrome after cyclic permutation. From now on, for every $q\in[2k]$ we fix the set $J_q\subseteq[2k]$ given by Claim~\ref{clm:unionofH1}.\\

\noindent{\bf Case 1:} For every $q\in[2k]$ and every $i\in J_q$ we have that $v_i\in[v_q]$ and $v_{i+1}\in[v_{q+1}]$. We apply Claim~\ref{lem:rotation} to $q$ and get $G[\mathcal{C}_q']\cong SG(F_{\iota(q)}^{\tau(q)},N_q,m_q)$ for some $2k-1\geq m_q\geq 0$ and $N_q=2k/(m_q+1)$. First, suppose that $m_q=0$. Then $W=(F_r^t)^{2k}$ for some $r\in[\ell]$ and $t\in\{1,T,\sym\}$ corresponding to Outcome~\ref{itm:oneortwovar}.

Now, suppose that $m_q=1$. Then there exist $r\in[\ell]$ and $t_r\in\{1,T,\sym\}$ such that
for every $i\in[2k]$ we have $(H_{q+2i},(v_{q+2i},v_{q+2i+1}))\cong (F_r,(a_r,b_r))^{t_r}$. Furthermore, we are in the case that $R_G(v_{q+2i})\cong R_G(v_{q})$ and $R_G(v_{q+2i+1})\cong R_G(v_{q+1})$ for every $i\in[2k]$ which implies by Proposition~\ref{prop:isoimplsamendeg} that $d_G(v_{q+2i+1})=d_G(v_{q+1})$ for every $i\in[k]$. Therefore by Proposition~\ref{prop:rigidfamily}~\ref{itm:sumdeg} there is $s\in[\ell]$ and $t_s\in\{1,T,\sym\}$ such that $(H_{q+2i+1},(v_{q+2i+1},v_{q+2i+2}))\cong (F_s,(a_s,b_s))^{t_s}$ for every $i\in[k]$ and such that those are all oriented the same way. Hence $W\eqshift(F_r^{t_1}F_s^{t_2})^{k}$ for some $t_1,t_2\in\{1,T,\sym\}$; note that $r=s$ is possible. Again this corresponds to Outcome~\ref{itm:oneortwovar}.

Finally, we assume that $m_q\geq 2$ and recall that we assume $k>1$. Since $G$ is positive by assumption and, since $\mathcal{C}_q'=\bigcup_{i\in[2k]}[v_i]\cup\bigcup_{x\in V(H_q)}[x]$ is a union of walk-tree partition classes of $G$, we have by Lemma~\ref{lem:indwalktrcl} that $G[\mathcal{C}_q']$ is also positive.
However, by Lemma~\ref{lem:sunnotpos}~\ref{itm:lodd}, if $m_q$ is odd, then $G[\mathcal{C}_q']$ is not positive. Thus $m_q$ is even. If $H_q$ is rigid, then by Lemma~\ref{lem:sunnotpos}~\ref{itm:levenrigid} $G[\mathcal{C}_q']$ is not positive. Hence $H_q$ is a symmetrized graph by the fact that each $F_i$ is rigid.
Note for the application of Lemma~\ref{lem:sunnotpos} that Proposition~\ref{prop:rigidfamily}~\ref{itm:rigidf} and~\ref{itm:connectivity} and Proposition~\ref{prop:automsym} ensure the required conditions for $H_q$.
Therefore we are only left with the following scenario:
Each letter of $W$ is a symmetrized graph, and for each $q\in[2k]$ we have $G[\mathcal{C}_q']$ is a sun graph $SG(H_q,N_q,m_q)$ for $m_q$ even and hence also $N_q=2k/(m_q+1)$ even. By Claim~\ref{lem:rotation}~\ref{itm:onerot} this corresponds to Outcome~\ref{itm:periodic}.\\

\noindent{\bf Case 2:} There exists $q\in[2k]$ such that there exists $i\in J_q$ such that $v_i\in[v_{q+1}]$ and $v_{i+1}\in[v_q]$.
We apply Claim~\ref{lem:case2} and have $W=(F_r^t)^{2k}$ or get a reflection $x\mapsto q+i+1-x$ for $G$. The former corresponds to Outcome~\ref{itm:oneortwovar} and for the latter consider the following. If $q+i+1$ is even, then this implies that after a cyclic permutation $W=LL^T$ for some word $L$ corresponding to Outcome~\ref{itm:reflection}. So we may suppose that every reflection $\varphi_p:v_x\mapsto v_{p-x}$ of $G$ has $p$ odd and in particular that $q+i+1$ is odd. W.l.o.g. we may assume after relabeling that $q+i=2k$ and the reflection for $G$ is induced by $\varphi_0:v_x\mapsto v_{1-x}$ with indices taken mod $2k$. Then this implies in particular that $H_0=H_{2k}$ and $H_k$ are symmetrized graphs. Furthermore, since $\varphi_0(v_0)=v_1$ and automorphic images lie in the same walk tree partition class, we get $[v_0]=[v_1]$. In particular this implies that for every $i\in J_0$ we have $v_{i}\in[v_0]$ and $v_{i+1}\in[v_{1}]$. We may therefore apply Claim~\ref{lem:rotation}~\ref{itm:manyrotsun} and get that $G[\mathcal{C}_0']\cong SG(F_{\iota(0)}^{\sym},2k/(m+1),m)$ for some $m\in\mathbb N$ with $2k-1\geq m\geq 0$ and $(1+m)\mid 2k$. 

The case $m=0$ corresponds to Outcome~\ref{itm:oneortwovar}. If $m=1$, then we have $[v_i]=[v_j]$ for all $i,j\in[2k]$ and therefore by Proposition~\ref{prop:isoimplsamendeg} and Claim~\ref{clm:degreepropG}~\ref{itm:rootvrt} that $W\eqshift (F_r^\sym F_s^\sym)^{k}$ for some $r,s\in[\ell]$, again corresponding to Outcome~\ref{itm:oneortwovar}.
If $m\geq 2$ is odd, then by Lemma~\ref{lem:sunnotpos}~\ref{itm:lodd}, we have that $G[\mathcal{C}_0']$ is not positive contradicting that a union of walk-tree partition classes of $G$ is positive by Lemma~\ref{lem:indwalktrcl}. Thus we may assume that $k\geq m\geq 2$ is even. By Claim~\ref{lem:rotation}~\ref{itm:onerot} for every copy of $H_0$ in $G[\mathcal{C}_0']$ there is a rotation of $G$ mapping $H_0$ to this copy. Therefore cyclically permuting $W$ such that the word begins with $H_0$ results in
\begin{equation}\label{eq:xqxqt}
(F_{\iota(0)}^{\sym} Q)^{2k/(m+1)}
\end{equation}
for some word $Q$ of length $m$. The reflection $\varphi_0$ of $G$ with reflection axis through $H_0$ then implies that $Q=Q^T$ and, since $m$ is even, $Q=PP^T$ for some word $P$.
Hence $W\eqshift(F_{\iota(0)}^{\sym} PP^T)^{2k/(m+1)}\eqshift(P^TF_{\iota(0)}^{\sym} P)^{2k/(m+1)}$.
Since $m$ is even, $m+1$ is odd. As $(m+1)\mid 2k$, it follows that $(m+1)\mid k$, and therefore $2k/(m+1)$ is even. Thus 
$$W\eqshift (P^TF_{\iota(0)}^{\sym}P)^{k/(m+1)}((P^TF_{\iota(0)}^{\sym} P)^{k/(m+1)})^T=LL^T$$ corresponding to Outcome~\ref{itm:reflection}.
\end{proof}

We can now apply this lemma to show Theorem~\ref{thm:realsym}.

\begin{proof}[Proof of Theorem~\ref{thm:realsym}]
First suppose that $P$ is symmetric and there exists $j\in[k]$ such that $P'=\prod_{i=j+1}^{j+k}X_{\iota(i)}^{\tau(i)}=LL^T$ for some symbolic matrix product $L$. Let $A_1,\cdots,A_{\ell}\in\RR^{n\times n}$. Then assigning $A_i$ to $X_i$, for every $i\in[\ell]$, leads to $P'(A_1,\cdots,A_{\ell})=L(A_1,\cdots,A_{\ell})L(A_1,\cdots,A_{\ell})^T$ which has  only nonnegative eigenvalues. Set $A\coloneqq\prod_{i=j+1}^k A_{\iota(i)}^{\tau(i)}$ and $B\coloneqq\prod_{i=1}^j A_{\iota(i)}^{\tau(i)}$. Then we get by using Fact~\ref{Fact:ABBA} that $P(A_1,\cdots,A_{\ell})=BA$ and $P'(A_1,\cdots,A_{\ell})=AB$ have the same eigenvalues and therefore all eigenvalues of $P(A_1,\cdots,A_{\ell})$ are real and nonnegative.
The other case follows similarly. 
If  there exists $j \in [k]$ such that  $\prod_{i=j+1}^{j+k}X_{\iota(i)}^{\tau(i)}$ can be written as $L' L'^T X_{\iota(j+k)}^\sym$, where $L'= \prod_{i=j+1}^{j+\tfrac{k-1}{2}}X_{\iota(i)}^{\tau(i)}$, then, since $L' L'^T$ is symmetric and positive semidefinite for any assignment of matrices and  $X_{\iota(j+k)}^\sym$ is a symmetric matrix, it follows from Fact~\ref{Fact:AAt} and Fact~\ref{Fact:ABBA} that the product $P$ is real-eigenvalued.\\

Now we consider the other more difficult direction. Suppose that $P = X_{\iota(1)}^{\tau(1)}\cdots X_{\iota(k)}^{\tau(k)}$ is real-eigenvalued with $\ell$ variables and of degree $k$.
Note that if the degree of $P$ is one, then $P = X^\sym$ (corresponding to Item~\ref{itm:realsymsymplusvar} of the theorem), as otherwise $P=X$ or $P=X^T$ and assigning any matrix from $\mathbb R^{n\times n}$ which does not have real eigenvalues contradicts that $P$ is real-eigenvalued. Hence we may from now on assume that the degree $k$ of $P$ is at least two.
We choose a family of edge-rooted graphs $\{(F_1,(a_1,b_1)),\cdots,(F_{\ell},(a_\ell,b_\ell))\}$ given by Proposition~\ref{prop:rigidfamily}.
Now let $H_1,\dots,H_k$ be a family of disjoint graphs such that $H_i\cong F_{\iota(i)}^{\tau(i)}$ for every $i\in[k]$. Consider the graph $G=G^2(H_1,\cdots,H_k)$ (defined in~\eqref{eq:Gsquare}) and the subgraph $C=G[v_1,\cdots,v_{2k}]$ which is isomorphic to the cycle of length $2k$. By Proposition~\ref{pro:cyclegluepositive}~\ref{itm:realeg} we have that $G$ is positive. Therefore we can apply Lemma~\ref{lem:auxgraphsym} to $G$. 

If Item \ref{itm:reflection} of the lemma applies, a cyclic permutation of $P^2$ can be written as $QQ^T$. Therefore $P^2$ has a reflection axis between $X_{\iota(i)}^{\tau(i)}$ and $X_{\iota(i+1)}^{\tau(i+1)}$ for some $i\in[2k]$ and indices modulo $k$. This geometrically projects to a reflection on $P$. Thus $P \eqshift L L^T$, if $k$ is even, or $P \eqshift L L^T X_a^\sym$, if $k$ is odd. Those are exactly the two cases of the theorem, and we are done.

So assume that Item~\ref{itm:oneortwovar} of Lemma~\ref{lem:auxgraphsym} applies. Suppose we have that $P=(X_1^t)^k$ for some $t\in\{1,T,\sym\}$. If $X_1$ is not necessarily symmetric, then $P=X_1^k$ or $P=(X_1^T)^k$. In this case choosing $M$ as in \eqref{eq:rotationm}, but with $\theta=\pi/(2k)$, shows that $P(M)$ has the eigenvalue $i$ and hence $P$ is not real-eigenvalued. Therefore $P = (X_1^\sym)^k$ corresponding to Item~\ref{itm:realsymsym} of the theorem, if $k$ is even, and Item~\ref{itm:realsymsymplusvar}, if $k$ is odd. The second case is that $P=(X_1X_1^T)^{k/2}$ or $P=(X_1^TX_1)^{k/2}$ which are both symmetric. Lastly, if $P=(X_1^{t_1}X_2^{t_2})^{k/2}$ for some $t_1,t_2\in\{1,T,\sym\}$, then we choose the two symmetric matrices
\begin{equation}\label{eq:l1notrealev}
A\coloneqq\begin{bmatrix}
1 & 0\\
0 & -1
\end{bmatrix}\;\textrm{ and }\;B\coloneqq\begin{bmatrix}
\cos(\theta) & \sin(\theta)\\
\sin(\theta) & -\cos(\theta)
\end{bmatrix}
\end{equation}
where $\theta=\pi/k$. This shows that $P(A,B)$ has the eigenvalue $i$ and hence $P$ is not real-eigenvalued.

Thus the leftover case is that Item~\ref{itm:periodic} of Lemma~\ref{lem:auxgraphsym} applies and in particular that the product consists only of symmetric variables. To address this scenario, note first that by Fact~\ref{Fact:AAt} our given product $P(X_1,\dots,X_\ell)=\prod_{i\in[k]}X_{\iota(i)}^\sym$ has only real eigenvalues for any assignment of symmetric positive semidefinite matrices if and only if $ P'(Y_1, \dots, Y_\ell) = \prod_{i\in[k]}Y_{\iota(i)}Y_{\iota(i)}^T$ has only real eigenvalues for any assignment of real matrices. Therefore, since $P$ is real-eigenvalued, it follows that $P'$ is also real-eigenvalued.

Furthermore, note that what we did so far already proves the equivalence of Item~\ref{itm:realrealsym} and Item~\ref{itm:realrealrealev} of Theorem~\ref{thm:realreal}, since the only leftover case considers products which are restricted to symmetric variables.
Since the variables $Y_i$ are not necessarily symmetric, we therefore know that $P'$ is real-eigenvalued if and only if $P'$ is symmetric in terms of the variables $Y_1, \dots, Y_\ell$. 

Combining the observation that $P'$ is real-eigenvalued with the proven implication of Theorem~\ref{thm:realreal} we get that $P' \eqshift L L^T$ where $L$ is a formal product in terms of variables $Y_i$ and $Y_i^T$. If $L$ can again be written as a formal product of $X_i$'s by formally substituting $Y_i^TY_i = Y_i Y_i^T = X_i^\sym$ for every $i\in[\ell]$, then also $P$ is symmetric, as desired.
If $L$ cannot be written as a formal product of $X_i^\sym$'s, then $L$ can be written as one of the following two possibilities:
    \begin{enumerate}
        \item $L = Y_r^{\tau_1} L'(X_1^\sym, \dots, X_\ell^\sym) Y_s^{\tau_2}$ where $L'(X_1^\sym, \dots, X_\ell^\sym)$ is a formal product of variables $X_i^\sym$ and $r,s\in[\ell]$, $\tau_1, \tau_2 \in \{1, T\}$; or
        \item $L = Y_r^{\tau_1} L'(X_1^\sym, \dots, X_\ell^\sym)$ where $L'(X_1^\sym, \dots, X_\ell^\sym)$ is a formal product of variables $X_i^\sym$ and $r\in[\ell]$, $\tau_1\in \{1, T\}$.
    \end{enumerate}
    In the latter case, we have that
    \begin{align*}
    P \eqshift P' &\eqshift Y_r^{\tau_1} L'(X_1^\sym, \dots, X_\ell^\sym) (Y_r^{\tau_1} L'(X_1^\sym, \dots, X_\ell^\sym))^T\\
        &\eqshift L'(X_1^\sym, \dots, X_\ell^\sym) L'(X_1^\sym, \dots, X_\ell^\sym)^T (Y_r^{\tau_1})^T Y_r^{\tau_1} \\
        &= L'(X_1^\sym, \dots, X_\ell^\sym) L'(X_1^\sym, \dots, X_\ell^\sym)^T X_r^\sym\;,
    \end{align*}
    which corresponds to Case~\ref{itm:realsymsymplusvar} of the assertion in Theorem \ref{thm:realsym}.

   So from now on we may assume that for every $L$ with $P' \eqshift L L^T$, we have that $L = Y_r^{\tau_1} L'(X_1^\sym, \dots, X_\ell^\sym) Y_s^{\tau_2}$ for some $r,s \in [\ell]$ and $\tau_1, \tau_2 \in \{1, T\}$, and such that $L'(X_1^\sym, \dots, X_\ell^\sym)$ is a formal product of variables $X_i$. Without loss of generality, we may relabel the variables and assume $r = 1$ and get
\begin{equation}
    P \eqshift X_1^\sym L'(X_1^\sym, \dots, X_\ell^\sym) X_{\iota(k/2+1)}^\sym L'(X_1^\sym, \dots, X_\ell^\sym)^T. \label{eq:decomp}
\end{equation}

Consider $H_1$ in $G$ corresponding to the first $X_1^\sym$ in (\ref{eq:decomp}). By the strong cyclic periodicity of the word $H_{\iota(1)}^{\tau(1)}\ldots H_{\iota(k)}^{\tau(k)}$ we therefore have, starting with the same $X_1^\sym$ as in (\ref{eq:decomp}), that we can write
\begin{equation} P^2 \eqshift (X_1^\sym U)^s\label{eq:sunlike} \end{equation}
 for some positive integer $s$ dividing $k$ and $U$ being a formal product of even length $2k/s-1$.
 Now we compare the formal products (\ref{eq:decomp}) and (\ref{eq:sunlike}). Since $P^2$ starts with $X_1^\sym U$, this means that $U$ (as an ordered sequence) is the prefix of the sequence of variables $X_i^\sym$'s appearing in $L' X_{\iota(k/2+1)}^\sym L'^T X_1^\sym L' X_{\iota(k/2+1)}^\sym L'^T$. Similarly, since $P^2$ ends with $U$ by (\ref{eq:sunlike}), and $P$ has the form (\ref{eq:decomp}), this means $U$ is also a suffix of $L' X_{\iota(k/2+1)}^\sym L'^T X_1^\sym L' X_{\iota(k/2+1)}^\sym L'^T$. However, since the sequence of variables $X_i$ appearing in $L' X_{\iota(k/2+1)}^\sym L'^T X_1^\sym L' X_{\iota(k/2+1)}^\sym L'^T$ is a palindrome (i.e., the $i$-th letter is the same as the $i$-th to last letter for all $i$), it follows that for some formal product $W$ of variables $X_i^\sym$ we have that $U$ can be written as either $W W^T$, or $W X_j^\sym W^T$ for some $j\in[\ell]$. Since the length of $U$ is even, only the former case applies and $U$ can be written as $W W^T$.
 Therefore $P^2 \eqshift (X_1^\sym W W^T)^s$. Since  $P^2$ has even length and $(X_1^\sym W W^T)$ has odd length, this implies that $s$ is even. Therefore $P \eqshift (X_1^\sym W W^T)^{s/2}$ and
\[
P \eqshift \begin{cases}
\left( W^T X_1^\sym W \right)^{s/4} \left[ \left( W^T X_1^\sym W \right)^{s/4} \right]^T & \text{if } s/2 \text{ is even}, \\
X_1^\sym \left[ W \left( W^T X_1^\sym W \right)^{(s/2-1)/2} \right] \left[ W \left( W^T X_1^\sym W \right)^{(s/2-1)/2} \right]^T & \text{if } s/2 \text{ is odd}.
\end{cases}
\]
In the first case, $P$ is symmetric, and the second case corresponds to Item~\ref{itm:realsymsymplusvar} of the theorem.
We thus completed the proof of Theorem \ref{thm:realsym}.
\end{proof}

It is now easy to derive Corollary~\ref{cor:sympossemidef} from Theorem~\ref{thm:realsym}.

\begin{proof}[Proof of Corollary~\ref{cor:sympossemidef}]
If $P$ is symmetric, then a cyclic permutation of $P$ evaluates by Fact~\ref{Fact:AAt} to a symmetric positive semidefinite matrix for all valid substitutions. Hence by Fact~\ref{Fact:ABBA} also $P$ is spectrally nonnegative. Conversely, suppose that $P$ is spectrally nonnegative. It is therefore real-eigenvalued, allowing us to apply Theorem~\ref{thm:realsym}. Suppose for contradiction that $P$ falls into Case~\ref{itm:realsymsymplusvar}. Assigning the $1 \times 1$ scalar matrix $X_i=(1)$ for every $i \neq j$ and $X_{j}=(-1)$ yields the evaluation $W=(-1)$. This matrix has a strictly negative eigenvalue, contradicting the assumption that $P$ is spectrally nonnegative. Thus, Case~\ref{itm:realsymsymplusvar} is impossible, leaving Case~\ref{itm:realsymsym}, meaning $P$ must be symmetric.
\end{proof}

The proof of Theorem~\ref{thm:realsym2} follows the same strategy as the proof of Theorem~\ref{thm:realsym}. We will mostly highlight the differences.

\begin{proof}[Proof of Theorem~\ref{thm:realsym2}]
First suppose that $P$ is symmetric. Then assigning real matrices $A_i\in\RR^{n\times n}$, $i\in[\ell]$, and evaluating the product leads to a matrix with only nonnegative real eigenvalues by Fact~\ref{Fact:AAt} and Fact~\ref{Fact:ABBA} and therefore a nonnegative trace. Thus $P$ is trace nonnegative.

Now we consider the other more difficult direction. Suppose that $P$ is trace nonnegative. Note that $P$ has even degree, as otherwise substituting $(-1)$ for every $X_i$ leads to the negative trace $-1$. Therefore we may assume that $P = X_{\iota(1)}^{\tau(1)}\cdots X_{\iota(2k)}^{\tau(2k)}$ is trace nonnegative with $\ell$ variables and of degree $2k\geq 2$. 

We choose a family of edge-rooted graphs $\{(F_1,(a_1,b_1)),\cdots,(F_{\ell},(a_\ell,b_\ell))\}$ given by Proposition~\ref{prop:rigidfamily}.
Now let $H_1,\dots,H_{2k}$ be a family of disjoint graphs such that $H_i\cong F_{\iota(i)}^{\tau(i)}$ for every $i\in[2k]$. Consider the graph $G=G(H_1,\cdots,H_{2k})$ (defined in~\eqref{eq:Gsquare}) and the subgraph $C=G[v_1,\cdots,v_{2k}]$ which is isomorphic to the cycle of length $2k$. By Proposition~\ref{pro:cyclegluepositive}~\ref{itm:tracepos} we have that $G$ is positive. Therefore we can apply Lemma~\ref{lem:auxgraphsym} to $G$.

If Item \ref{itm:reflection} of the lemma applies, a cyclic permutation of $P$ can be written as $LL^T$. Thus $P$ is symmetric.

So assume that Item~\ref{itm:oneortwovar} of Lemma~\ref{lem:auxgraphsym} applies. Suppose that $P=(X_1^t)^{2k}$ for some $t\in\{1,T,\sym\}$. If $X_1$ is not necessarily symmetric, then $P=X_1^{2k}$ or $P=(X_1^T)^{2k}$. In this case choosing $M$ as in \eqref{eq:rotationm} but with $\theta=\pi/(2k)$ results in $\tr(P(M))=-2$ contradicting that $P$ is trace nonnegative. Therefore $P = (X_1^\sym)^{2k}$ which is a symmetric product. The second case is that $P=(X_1X_1^T)^{k}$ or $P=(X_1^TX_1)^{k}$ which are both symmetric. Lastly, if $P=(X_1^{t_1}X_2^{t_2})^{k}$ for some $t_1,t_2\in\{1,T,\sym\}$, then we choose the two symmetric matrices
\begin{equation}\label{eq:l1notpostr}
A\coloneqq\begin{bmatrix}
1 & 0\\
0 & -1
\end{bmatrix}\;\textrm{ and }\;B\coloneqq\begin{bmatrix}
\cos(\theta) & \sin(\theta)\\
\sin(\theta) & -\cos(\theta)
\end{bmatrix}
\end{equation}
where $\theta=\pi/k$. This shows that $\tr(P(A,B))=-2$ contradicting that $P$ is trace nonnegative.\\

Thus the leftover case is that Item~\ref{itm:periodic} of Lemma~\ref{lem:auxgraphsym} applies and in particular that the product consists only of symmetric variables. To address this scenario, note first that by Fact~\ref{Fact:AAt} our given product $P(X_1,\dots,X_\ell)=\prod_{i\in[k]}X_{\iota(i)}^\sym$ is trace nonnegative for any assignment of symmetric positive semidefinite matrices if and only if $ P'(Y_1, \dots, Y_\ell) = \prod_{i\in[k]}Y_{\iota(i)}Y_{\iota(i)}^T$ has a nonnegative trace for any assignment of real matrices. Therefore, since $P$ is trace nonnegative, it follows that $P'$ is also trace nonnegative.
Furthermore, note that what we did so far already proves the equivalence of Item~\ref{itm:realrealsym} and Item~\ref{itm:realrealtrace} of Theorem~\ref{thm:realreal}, since the only leftover case considers products which are restricted to symmetric variables. Since the variables $Y_i$ are not necessarily symmetric, we therefore know that $P'$ is trace nonnegative if and only if $P'$ is symmetric in terms of the variables $Y_1, \dots, Y_\ell$. Combining the observation that $P'$ is trace nonnegative with the proven implication of Theorem~\ref{thm:realreal} we get that $P' \eqshift L L^T$ where $L$ is a formal product in terms of variables $Y_i$ and $Y_i^T$. The remainder of the proof can be copied verbatim from the proof of Theorem~\ref{thm:realsym} which together with the observation that $P$ has even length shows that $P$ is symmetric
and thus completes the proof of Theorem~\ref{thm:realsym2}.
\end{proof}

As already discussed after its statement Theorem~\ref{thm:realreal} is implied by Theorems~\ref{thm:realsym} and~\ref{thm:realsym2}.
We have left to prove Theorems~\ref{thm:HilJoh},~\ref{thm:HilJoh2}, and~\ref{thm:HilJohno-sym} as applications of our main theorems. 

\begin{proof}[Proof of Theorem \ref{thm:HilJoh}]
Suppose first that $P$ is nearly symmetric. Then after substituting in any tuple of real symmetric positive semidefinite matrices $(A_1,\ldots,A_\ell)$ we have by Fact~\ref{Fact:ABBA} that $P(A_1,\ldots,A_\ell)$ has the same eigenvalues as some product $Q(A_1,\ldots,A_\ell)R(A_1,\ldots,A_\ell)$ where $Q$ and $R$ are both palindromes and therefore $Q(A_1,\ldots,A_\ell)$ and $R(A_1,\ldots,A_\ell)$ are both positive semidefinite. By the last claim in Fact \ref{Fact:AAt}, it follows that  $P$ is spectrally nonnegative and in particular trace nonnegative.

For the other direction suppose that $P$ is spectrally nonnegative or trace nonnegative over all positive semidefinite inputs. We substitute $X_i=Y_iY_i^T$ to get a product $\widetilde{P}$ and note that $\widetilde{P}$ is trace nonnegative over arbitrary real square matrices (by Fact~\ref{Fact:AAt}). Therefore by Theorem~\ref{thm:realreal} after a cyclic permutation $\widetilde{P}$ can be written as $LL^T$. This gives a reflection symmetry of the cyclic word $\widetilde{P}$. If the reflection axis cuts only between blocks $Y_iY_i^T$, then this reflection descends directly to the cyclic word $P$, showing that $P$ is symmetric up to cyclic permutation.

It remains to consider the case where the reflection axis cuts through the middle of a block $Y_iY_i^T$ (i.e., $Y_i$ is to the left and $Y_i^T$ is to the right). Suppose the other end of the reflection axis does not cut through $Y_i Y_i^T$ or any other block $Y_j Y_j^T$. Then a cyclic permutation of $P$ can be written as $X_i L L^T$ where $L$ is a product (word) of the variables. Thus, $P$ is nearly symmetric.

Now suppose the reflection axis cuts through $Y_iY_i^T$ on one end and through the middle of $Y_j Y_j^T$ on the other end (where $j$ may equal $i$). In this case, a cyclic permutation of $P$ can be written as $X_i L X_j L^T = X_i (L X_j L^T)$. This form is again nearly symmetric.
\end{proof}

\begin{proof}[Proof of Theorem~\ref{thm:HilJoh2}]
Suppose first that $P$ is nearly symmetric. Then after substituting in any tuple of real symmetric positive definite matrices $(A_1,\ldots,A_\ell)$ we have by Fact~\ref{Fact:ABBA} and Fact~\ref{Fact:AAt} that $P(A_1,\ldots,A_\ell)$ has only nonnegative real eigenvalues. Moreover, since the substitutions $A_i$ are in particular invertible, the matrix product $P(A_1,\ldots,A_\ell)$ is invertible. Hence none of its eigenvalues can be zero, so the nonnegative eigenvalues obtained above are in fact strictly positive. It follows that all eigenvalues of $P(A_1,\ldots,A_\ell)$ are real and positive and therefore $\operatorname{tr}(P)>0$.

Conversely, assume that $P$ always has strictly positive eigenvalues under substitutions by real symmetric positive definite matrices. Since the evaluation of the matrix word $P$ is continuous and the eigenvalues of a matrix depend continuously on its entries, passing to the topological closure shows that $P$ has only nonnegative eigenvalues under substitutions by real symmetric positive semidefinite matrices. Thus we may apply Theorem~\ref{thm:HilJoh} which implies that $P$ is nearly symmetric. The case where one assumes $\operatorname{tr}(P)>0$ follows by the analogous argument.
\end{proof}

\begin{proof}[Proof of Theorem~\ref{thm:HilJohno-sym}]
Item~\ref{itm:HJsym_sympower} implies Items~\ref{itm:HJsym_possp} and~\ref{itm:HJsym_postr} by Fact~\ref{Fact:ABBA}, Fact~\ref{Fact:AAt}, and the observation that the eigenvalues of $A^k$ are precisely the $k$-th powers of the eigenvalues of $A$.

Now, suppose that Item~\ref{itm:HJsym_possp} or~\ref{itm:HJsym_postr} holds. We make the formal substitution $X_i = B_i B_i^T C_i C_i^T$. Replacing each variable $X_i$ by this expression transforms $P$ into a new word $\tilde{P}$ in the variables $B_i$, $C_i$, and their transposes. By assumption and Fact~\ref{Fact:AAt}, $\tilde{P}$ evaluates to a matrix with nonnegative eigenvalues (respectively, has nonnegative trace) for all real square matrices $B_i$ and $C_i$.
We may therefore apply Theorem \ref{thm:realreal}, which implies that $\tilde P$ is symmetric. Consider the cyclic structure of $\tilde{P}$ and suppose its reflection axis cuts between some $B_1$ and $B_1^T$ (say, with $B_1^T$ positioned clockwise from $B_1$). This occurrence of $B_1 B_1^T$ must belong to an occurrence of $X_1$. By the structure of $X_1$, the adjacent block to its right (clockwise) must be $C_1 C_1^T$. Because $\tilde{P}$ is symmetric under this reflection, the term immediately to the left (counterclockwise) of the $B_1 B_1^T$ block must also be $C_1 C_1^T$. However, this new $C_1 C_1^T$ block on the left must belong to a different instance of $X_1$, and therefore must itself be preceded by a block $B_1 B_1^T$. By reflection, this forces yet another $B_1 B_1^T$ on the right, which in turn must be part of a new $X_1$ block, thereby forcing another $C_1 C_1^T$. Iterating this argument around the cycle, we conclude that the entire word is composed exclusively of repeating $X_1$ blocks. Thus, in this case $P = X_1^k$ for some integer $k$.

On the other hand, suppose the reflection axis does not cut strictly inside any $B_i B_i^T$ or $C_i C_i^T$ block. Then the reflection axis must lie between complete $X_i$ blocks, since it cannot lie between $B_i B_i^T$ and $C_i C_i^T$ as they are not reflections of one another. In this case, the reflection symmetry of $\tilde{P}$ directly implies that a cyclic permutation of the original word $P$ can be written in the form $L L^T$, where $L$ is a product of the original variables $X_i$ and their transposes. 

The case where nonnegativity is replaced by strict positivity follows from the same continuity argument as in the proof of Theorem~\ref{thm:HilJoh2}. 
\end{proof}

\section{Other Generalizations: Complex Matrices and Operators}\label{sec:generalizations}

The structural characterizations obtained in the previous sections are not specific to real matrices. They extend naturally to complex matrices once the formal transpose is replaced by the formal adjoint. More precisely, we consider matrix words in complex variables $\{X_i\}_{i \in [\ell]}$ together with a formal involution $*$ satisfying $(X_i^*)^*=X_i$ and $(X_iX_j)^*=X_j^*X_i^*$. If certain variables are required to be Hermitian, we denote them by $X_i^{\Herm}$ and impose the relation $(X_i^{\Herm})^* = X_i^{\Herm}$. A word $P=\prod_{i\in[k]}X_{\iota(i)}^{\tau(i)}$ is then defined by maps $\iota: [k] \to [\ell]$ and $\tau: [k] \to \{1, *, \Herm\}$. Furthermore, $P$ is called \emph{formally self-adjoint} if a cyclic permutation of $P$ can be written in the form $LL^*$, i.e. there exists an index $j\in[k]$ such that $\prod_{i=j+1}^{j+k}X_{\iota(i)}^{\tau(i)}=LL^*$, where indices are considered modulo $k$ and $L=\prod_{i=j+1}^{j+\tfrac{k}{2}}X_{\iota(i)}^{\tau(i)}$.

The following are the complex analogues of our results with proof which imply Corollary~\ref{cor:complex}, demonstrating that the algebraic rigidity of the real case perfectly mirrors the complex case.

\begin{coro}\label{thm:realsym-complex}
Let $P$ be a matrix word in complex variables and with some variables required to be Hermitian. Then $P$ is real-eigenvalued over complex matrices if and only if one of the following holds:
\begin{enumerate}
\item $P$ is formally self-adjoint, i.e. a cyclic permutation of $P$ is of the form $LL^*$; \label{itm:realsymsymcompl}
\item a cyclic permutation of $P$ is of the form $LL^*X_i^{\mathrm{Herm}}$. \label{itm:realsymsymplusvarcompl}
\end{enumerate}
\end{coro}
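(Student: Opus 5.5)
The plan is to reduce to the real case, Theorem~\ref{thm:realsym}, using the standard real representation of complex matrices.

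\textbf{Sufficiency.} This direction is immediate. If a cyclic permutation of $P$ is $LL^*$, then by Fact~\ref{Fact:ABBA} the evaluation of $P$ has the same spectrum as a positive semidefinite Hermitian matrix. If instead it is $LL^*X_i^{\Herm}$, the evaluation has the same spectrum as $L L^* H$ with $H$ Hermitian. The complex analogue of Fact~\ref{Fact:AAt}~\ref{itm:possemi} (for $LL^*$ positive semidefinite) then gives real eigenvalues. That analogue is proved by perturbing $LL^*$ to $LL^*+\eps I$ and observing that $(LL^*+\eps I)H$ is similar to $(LL^*+\eps I)^{1/2}H(LL^*+\eps I)^{1/2}$; then let $\eps\to0$.

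\textbf{Necessity: setting up the embedding.} For $A=B+iC\in\mathbb C^{n\times n}$, define
\[
\rho(A)=\begin{bmatrix} B & -C\\ C & B\end{bmatrix}\in\RR^{2n\times 2n}.
\]
This map is a real-algebra homomorphism with $\rho(A^*)=\rho(A)^T$. It sends Hermitian matrices to symmetric ones, and the spectrum of $\rho(A)$ is the spectrum of $A$ together with its complex conjugate. Hence $\rho(A)$ has only real eigenvalues if and only if $A$ does.

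\textbf{The key step: surjectivity onto $\rho$-images is not available.} Real matrices are not all of the form $\rho(A)$. So real-eigenvaluedness of $P$ over complex matrices does not directly transfer to real-eigenvaluedness of the corresponding real word $P_\RR$ (obtained by $*\mapsto T$, $\Herm\mapsto\sym$) over all real matrices. I would resolve this by simply restricting to real inputs. Real matrices are complex matrices, and for real inputs the adjoint equals the transpose while Hermitian means symmetric. So if $P$ is real-eigenvalued over complex matrices, then $P_\RR$ is real-eigenvalued over real matrices, with symmetric variables where $P$ has Hermitian ones. Theorem~\ref{thm:realsym} then yields a cyclic permutation of $P_\RR$ equal to $LL^T$ or $LL^TX_j^\sym$.

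\textbf{Translating back.} This is a purely formal step. The map $*\leftrightarrow T$, $\Herm\leftrightarrow\sym$ is a bijection of formal words commuting with the involution and with cyclic shifts. Therefore the same identity holds for $P$ with $L^*$ and $X_j^{\Herm}$. The only point to check is that the formal involution rules match: $(X^{\Herm})^*=X^{\Herm}$ corresponds to $(X^\sym)^T=X^\sym$. I expect this step to be the only, and very mild, obstacle. The $\rho$ embedding is therefore needed only for the spectral-nonnegativity and trace versions in Corollary~\ref{cor:complex}, not for this statement.
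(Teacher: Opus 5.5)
Your proof is correct and matches the paper's argument for the substantive direction: the paper likewise gets necessity by restricting complex substitutions to real ones (real symmetric matrices being Hermitian), applying Theorem~\ref{thm:realsym} to $P_{\mathbb R}$, and formally replacing $T$ by $*$. The only difference is in the easy direction, where the paper applies the real result to realifications at size $2n$ via \eqref{7.2}--\eqref{7.3} while you argue directly over $\mathbb C$. Your direct argument is equally valid, and it shortens further since $L(L^*H)$ is isospectral to the Hermitian matrix $L^*HL$. Your closing aside is slightly off, though: the paper handles the trace version by restriction as well, precisely because realification only transfers $\operatorname{Re}\tr$.
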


As in the real setting, the additional Hermitian factor occurring in the second case guarantees a real spectrum but does not force nonnegativity. Consequently, the classification simplifies when one requires all eigenvalues to be nonnegative.

\begin{coro}\label{cor:sympossemidefcom-complex}
Let $P$ be a matrix word in complex variables and with some variables required to be Hermitian. The following are equivalent.
\begin{enumerate}
    \item $P$ is formally self-adjoint.
    \item $P$ is spectrally nonnegative over complex matrices.
    \item $P$ is trace nonnegative over complex matrices. 
\end{enumerate}
\end{coro}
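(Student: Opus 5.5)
The plan is to reduce Corollary~\ref{cor:sympossemidefcom-complex} to the real results via the standard real representation of complex matrices, and then handle the trace statement separately. The implications $1\Rightarrow 2\Rightarrow 3$ are routine. If a cyclic permutation of $P$ equals $LL^*$, then for any substitution $L(A)L(A)^*$ is positive semidefinite Hermitian. By the complex analogue of Fact~\ref{Fact:ABBA} ($AB$ and $BA$ share eigenvalues), $P(A)$ has the same nonnegative spectrum, and hence also a nonnegative trace.

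For $2\Rightarrow 1$, I will first observe that spectral nonnegativity implies $P$ is real-eigenvalued. Corollary~\ref{thm:realsym-complex} then leaves two cases: $P\eqshift LL^*$, or $P\eqshift LL^*X_j^{\Herm}$. The second case is excluded exactly as in the proof of Corollary~\ref{cor:sympossemidef}: substitute $1\times1$ matrices $X_i=(1)$ for $i\neq j$ and $X_j=(-1)$, which is Hermitian, to obtain the eigenvalue $-1$. (If $j$ occurs inside $L$, its occurrences there come in conjugate pairs and contribute $(-1)^2=1$, so the value is still $-1$.)

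The main step is $3\Rightarrow 1$. Rather than relying on real-eigenvalued statements, I will restrict the complex substitutions to real matrices. Real matrices are complex matrices, real symmetric matrices are Hermitian, and on real inputs $A^*=A^T$. So a word that is trace nonnegative over complex matrices (with the Hermitian constraints) is, after the formal replacement $*\mapsto T$ and $\Herm\mapsto\sym$, trace nonnegative over real matrices with symmetric variables. Theorem~\ref{thm:realsym2} then says that the translated real word is symmetric, i.e.\ cyclically equal to $LL^T$ as a formal word. The translation is a bijection on formal words which respects the involution and cyclic shifts, so translating back gives $P\eqshift LL^*$. The same restriction argument also gives $2\Rightarrow 1$ directly via Corollary~\ref{cor:sympossemidef}, so Corollary~\ref{thm:realsym-complex} is not strictly needed.

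The step I expect to need the most care is checking that this dictionary between formal words is faithful. Concretely, the relations $(X^{\Herm})^*=X^{\Herm}$ and $(X^{\sym})^T=X^{\sym}$ must match, and the convention that a Hermitian-constrained variable appears only as $X_i^{\Herm}$ must carry over to the $\sym$ convention. Once this is verified, formal self-adjointness and formal symmetry correspond exactly, and the corollary follows with no new analytic input.
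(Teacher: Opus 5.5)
Your proposal is correct and follows essentially the paper's argument. The trace part is exactly the paper's restriction to real substitutions, followed by Theorem~\ref{thm:realsym2} and translating $LL^T$ back to $LL^*$. For the spectral part the paper invokes the realification equivalence of Claim~\ref{claim:complexrealequiv} together with Corollary~\ref{cor:sympossemidef}; once $1\Rightarrow 2$ is checked directly, as you do, only the restriction direction of that claim is needed, which is precisely your argument.
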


Second, we can  obtain a  characterization when restricting the domain entirely to (Hermitian) positive semidefinite matrices.  Consequently, we may consider the word $P$ as purely formed by variables $X_i$ without adjoints.

\begin{coro}\label{thm:HilJoh-complex}
Let $P$ be a matrix word. The following are equivalent:
\begin{enumerate}
\item $P$ is spectrally nonnegative over complex Hermitian positive semidefinite matrices.
\item $P$ is trace nonnegative over complex Hermitian positive semidefinite matrices.
\item $P$ is nearly symmetric; i.e., after a cyclic permutation, $P$ can be written as the product of two palindromes.
\end{enumerate}
\end{coro}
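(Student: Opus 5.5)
The plan is to reduce to the real case through the standard real representation of complex matrices, and then repeat the substitution argument from the proof of Theorem~\ref{thm:HilJoh}.

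\textbf{Sufficiency.} Assume $P$ is nearly symmetric. Then a cyclic permutation of $P$ is $QR$ with $Q$ and $R$ palindromes. A palindrome evaluated on Hermitian positive semidefinite matrices is positive semidefinite. For instance, $A_1A_2A_1 = (A_1A_2^{1/2})(A_1A_2^{1/2})^*$, and in general the middle factor is split by a square root. The complex analogue of Fact~\ref{Fact:AAt} says that a product of two positive semidefinite matrices has nonnegative real eigenvalues, since $QR$ is similar to $R^{1/2}QR^{1/2}$. By Fact~\ref{Fact:ABBA}, $P$ is then spectrally nonnegative, and hence trace nonnegative. So (3) implies (1), and (1) implies (2) is immediate.

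\textbf{Necessity, step 1 (formal substitution).} Assume (2). Substitute $X_i = Y_iY_i^*$ with $Y_i$ arbitrary complex matrices. This gives a word $\widetilde P$ in the $Y_i, Y_i^*$ that is trace nonnegative over arbitrary complex square matrices.

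\textbf{Step 2 (pass to the real case).} Restrict the $Y_i$ to real matrices, which are in particular complex. Then $Y_i^*=Y_i^T$, so the word $\widetilde P$, read with transposes, is trace nonnegative over arbitrary real square matrices. Theorem~\ref{thm:realreal} now gives $\widetilde P \eqshift LL^T$ as a formal word. Formally this is identical to $\widetilde P\eqshift LL^*$, since the words are the same strings.

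This shortcut is the step I expect to need the most care. It avoids the real-representation argument by simply specializing to real inputs, which is legitimate because trace nonnegativity over all complex inputs implies it over real inputs. Alternatively, I would invoke Corollary~\ref{cor:sympossemidefcom-complex} directly.

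\textbf{Step 3 (descend to $P$).} Read off the reflection symmetry of the cyclic word $\widetilde P$ exactly as in the proof of Theorem~\ref{thm:HilJoh}, distinguishing where the reflection axis cuts.
\begin{itemize}
\item If the axis lies between blocks $Y_iY_i^*$, the symmetry descends to $P$. Then $P$ is cyclically of the form $MM^{\mathrm{rev}}$, a product of two palindromes with one of them empty (in fact the concatenation is a palindrome up to rotation).
\item If the axis cuts one block and falls between blocks on the other side, $P\eqshift X_iMM^{\mathrm{rev}}$, which is nearly symmetric.
\item If the axis cuts two blocks, $P\eqshift X_i(MX_jM^{\mathrm{rev}})$, again a product of two palindromes.
\end{itemize}
Here $M^{\mathrm{rev}}$ denotes the reversed word, which equals $M^*$ once restricted to Hermitian letters.

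This gives (2) implies (3), completing the equivalence.
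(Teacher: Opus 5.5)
Your proposal is correct and is essentially the paper's argument. The paper restricts to real symmetric positive semidefinite inputs and cites Theorem~\ref{thm:HilJoh}, whereas you inline that theorem's proof (substitute $X_i=Y_iY_i^*$, restrict to real $Y_i$, apply Theorem~\ref{thm:realreal}, and read off the reflection axis), and your sufficiency direction is the same palindromes-are-positive-semidefinite argument. One small precision: conjugating by $R^{1/2}$ only shows that $QR$ is similar to $R^{1/2}QR^{1/2}$ when $R$ is invertible, but you only need the two to have equal spectra, which Fact~\ref{Fact:ABBA} applied to $(QR^{1/2})R^{1/2}$ gives directly.
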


The same combinatorial characterization remains valid when nonnegativity is replaced by strict positivity, exactly as in Theorem~\ref{thm:HilJoh2}. The previous corollary concerns substitutions by Hermitian positive semidefinite matrices. As in the real setting, the Hermitian assumption on the substituted matrices can also be removed.

\begin{coro}\label{cor:posspecinput_complex}
    Let $P$ be a matrix word over complex variables $\{X_i\}_{i \in [\ell]}$. Then $P$ is spectrally nonnegative (or real-eigenvalued, or trace nonnegative) over arbitrary complex matrices with real nonnegative spectra if and only if $P$ is formally self-adjoint or $P$ is a power of a single variable (i.e., $P = X_i^k$).
\end{coro}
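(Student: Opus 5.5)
The plan is to reduce Corollary~\ref{cor:posspecinput_complex} to the complex analogue of Theorem~\ref{thm:HilJohno-sym}. I would repeat that proof, using Corollary~\ref{cor:sympossemidefcom-complex} in place of Theorem~\ref{thm:realreal}. I would also add one step handling the ``real-eigenvalued'' version, which Theorem~\ref{thm:HilJohno-sym} did not cover.

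For sufficiency, first let $P\eqshift LL^*$. By the complex version of Fact~\ref{Fact:ABBA}, every evaluation of $P$ has the same spectrum as $L(A)L(A)^*$, which is Hermitian positive semidefinite. So the spectrum is real and nonnegative, and the trace is nonnegative. If instead $P=X_i^k$ and $A_i$ has nonnegative spectrum, then the eigenvalues of $A_i^k$ are the $k$-th powers of those of $A_i$, hence nonnegative. Thus all three properties hold.

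For necessity, note that spectral nonnegativity implies both real-eigenvalued and trace nonnegative. So I will treat two hypotheses: trace nonnegative, and real-eigenvalued.

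\textbf{Trace nonnegative.} Substitute formally $X_i=B_iB_i^*C_iC_i^*$ with arbitrary complex $B_i,C_i$. Such a product has the spectrum of $(C_i^*B_i)(C_i^*B_i)^*$ up to zeros, so it has nonnegative spectrum. The resulting word $\tilde P$ is therefore trace nonnegative over arbitrary complex matrices. Corollary~\ref{cor:sympossemidefcom-complex} gives $\tilde P\eqshift LL^*$, i.e.\ a reflection of the cyclic word $\tilde P$.

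I would then run the block argument from the proof of Theorem~\ref{thm:HilJohno-sym} verbatim. If the reflection axis cuts inside some block $B_iB_i^*$ or $C_iC_i^*$, iterating the reflection around the cycle forces $P=X_i^k$. Otherwise the axis lies between complete $X_i$-blocks. The reflection then descends to $P$, writing it as $LL^*$ in the original letters. This step additionally uses the fact that the words $B_iB_i^*C_iC_i^*$ are not reverse-adjoints of one another, so no letter $X_i$ gets paired with an $X_i^*$ in a way that fails to descend.

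\textbf{Real-eigenvalued.} This is the main obstacle, because Corollary~\ref{thm:realsym-complex} allows the extra form $LL^*X^{\Herm}$, which has no counterpart in the trace setting. I would use the same substitution: $\tilde P$ is then real-eigenvalued over arbitrary complex $B_i,C_i$. Since $\tilde P$ has no Hermitian variables, Corollary~\ref{thm:realsym-complex} gives $\tilde P\eqshift LL^*$, and the previous paragraph concludes.

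What remains to check is that the substitution preserves real-eigenvaluedness. This holds because each factor $B_iB_i^*C_iC_i^*$ has nonnegative spectrum, so it is an admissible input for $X_i$. If this somehow failed, my fallback would be a direct counterexample: $1\times1$ inputs cannot produce non-real eigenvalues, but $2\times2$ upper-triangular positive-spectrum matrices conjugated suitably can produce rotation-like products. That construction is where I expect the real work to lie if the reduction needs more care.
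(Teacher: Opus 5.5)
Your proof is correct and is essentially the paper's argument: the same substitution $X_i=B_iB_i^*C_iC_i^*$ followed by the block-reflection argument of Theorem~\ref{thm:HilJohno-sym}. The only difference is that you apply the complex characterizations for arbitrary matrices (Corollaries~\ref{thm:realsym-complex} and~\ref{cor:sympossemidefcom-complex}) directly, whereas the paper first passes to real matrices by realification or restriction and then invokes Theorems~\ref{thm:HilJohno-sym} and~\ref{thm:realreal}. Your closing ``fallback'' is unnecessary, because admissibility of $B_iB_i^*C_iC_i^*$ as an input already settles the real-eigenvalued case; also, if $P$ may contain adjoint letters, the block argument yields $(X_i^t)^k$ with $t\in\{1,*\}$, as in the real theorem, rather than only $X_i^k$.
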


To prove these results in the complex setting, we first recall the following useful transformation between complex and real matrices. 
For \(A=U+iV\in \mathbb C^{n \times n}\), where \(U,V\in \mathbb R^{n \times n}\),
define its {\it realification} by
\[
\mathcal R_n(A):=
\begin{pmatrix}
U&-V\\
V&U
\end{pmatrix}
\in \mathbb R^{2n \times 2n}.
\]
The map \(\mathcal R_n\) is an injective unital
\(\mathbb R\)-algebra homomorphism and satisfies
\begin{equation}
\mathcal R_n(A^*)=\mathcal R_n(A)^T.
\label{7.1}
\end{equation}

In particular, \(\mathcal R_n\) sends Hermitian matrices to real
symmetric matrices and Hermitian positive semidefinite matrices to
real symmetric positive semidefinite matrices.
Given a complex matrix word \(P\), let \(P_{\mathbb R}\) denote the
corresponding real matrix word obtained by replacing each formal
adjoint \(*\) by the formal transpose \(T\), and each Hermitian
variable by a symmetric variable. Then
\begin{equation}
\mathcal R_n\bigl(P(A_1,\ldots,A_\ell)\bigr)
=
P_{\mathbb R}\bigl(
\mathcal R_n(A_1),\ldots,\mathcal R_n(A_\ell)
\bigr).
\label{7.2}
\end{equation}

Moreover, over \(\mathbb C\), the matrix \(\mathcal R_n(B)\) is
similar to \(B\oplus\overline B\). Consequently,
\begin{equation}
\operatorname{spec}\bigl(\mathcal R_n(B)\bigr)
=
\operatorname{spec}(B)
\cup
\overline{\operatorname{spec}(B)}.
\label{7.3}
\end{equation}
We have the following claim.
\begin{claim}\label{claim:complexrealequiv}
\(P\) is real-eigenvalued, respectively spectrally
nonnegative, over all admissible complex matrix substitutions of all
sizes if and only if \(P_{\mathbb R}\) has the corresponding property
over all admissible real matrix substitutions of all sizes. 
\end{claim}
\begin{proof}One
direction follows by restricting complex substitutions to real ones.
For the converse, apply the real hypothesis at size \(2n\) to the
realifications of an arbitrary complex \(n\times n\) tuple and use
(\ref{7.2}) and (\ref{7.3}). The doubling of dimension causes no difficulty because
the properties are required in every dimension.
\end{proof}

For completeness, the traces satisfy
\begin{equation}
\tr\bigl(\mathcal R_n(B)\bigr)
=
2\operatorname{Re}\tr(B).
\label{7.4}
\end{equation}
Thus realification alone transfers only the real part of the trace.
The complex trace-nonnegativity assertions below will therefore use
restriction to real matrices together with the resulting structural
characterization.

\begin{proof}[Proof of Corollaries \ref{thm:realsym-complex}, \ref{cor:sympossemidefcom-complex}]
Let \(P_{\mathbb R}\) be the real matrix word associated with \(P\)
under the realification correspondence above. Then by Fact \ref{claim:complexrealequiv}, Theorem \ref{thm:realsym} and 
Corollary \ref{cor:sympossemidef} give
Corollary \ref{thm:realsym-complex} and the equivalence of
Items~1 and~2 in Corollary \ref{cor:sympossemidefcom-complex}, after
replacing \(T\) by \(*\). 

For trace positivity, restriction to real substitutions and
Theorem \ref{thm:realsym2} show that
$P_{\mathbb R}=LL^T$ after circular permutation,
and hence \(P=LL^*\) after circular permutation. Conversely, this identity gives
$\tr P(A)
=\tr\!\left(L(A)L(A)^*\right)\geq0$
for every complex substitution \(A\). This proves the equivalence
of Items~1 and~3 in Corollary~\ref{cor:sympossemidefcom-complex}.
\end{proof}

\begin{proof}[Proof of Corollary \ref{thm:HilJoh-complex}]
   By realification, Item~1 is equivalent to its real counterpart, while
Item~2 implies its real counterpart by restriction. Hence Theorem~\ref{thm:HilJoh}
gives Item~3. Conversely, if \(P = L_1 L_2\) after circular permutation, where \(L_1\) and \(L_2\)
are palindromes, then on Hermitian positive semidefinite inputs both
\(L_1\) and \(L_2\) evaluate to positive semidefinite matrices. This gives Items~1
and~2. The strictly positive version follows similarly from
Theorem~\ref{thm:HilJoh2}. 
\end{proof}
\begin{proof}[Proof of Corollary \ref{cor:posspecinput_complex}]
    Realification preserves the class of matrices with spectrum contained
in \([0,\infty)\), so the real-eigenvalue and spectral-nonnegativity
assertions reduce to their real counterparts. Theorem~\ref{thm:HilJohno-sym}, and the
same proof using Theorem~\ref{thm:realreal} for real-eigenvaluedness, give the stated
results. For trace nonnegativity, necessity follows
by restriction to real matrices and Theorem~\ref{thm:HilJohno-sym}. Conversely, a word
cyclically equivalent to \(LL^*\) is spectrally nonnegative, while
\(X_i^k\) has eigenvalues \(\lambda^k\geq0\); in either case its trace
is nonnegative.
\end{proof}

\subsection{Bounded Operators on Hilbert Spaces}\label{subsec:operator}

Finally, this structural rigidity scales to infinite dimensions. Let $\mathcal{H}$ be an infinite-dimensional Hilbert space over $\mathbb{C}$, and let $\mathcal{B}(\mathcal{H})$ denote the set of bounded linear operators from $\mathcal{H}$ to itself. We consider variables $X_i$ taking values in $\mathcal{B}(\mathcal{H})$, where $X_i^*$ denotes the operator adjoint. 

A symbolic product of operators $P$ is called \emph{real-eigenvalued over $\mathcal{H}$} if, for any assignment of bounded operators, the resulting spectrum is entirely real. Because our definition of a formally self-adjoint word treats $P$ as a purely symbolic expression, it is independent of the underlying space and the definition carries over from the previous subsection. 
Any finite-dimensional matrix counterexample embeds into $\mathcal B(\mathcal H)$ by acting on a finite-dimensional subspace and extending by zero on the orthogonal complement. Thus universal positivity over $\mathcal B(\mathcal H)$ implies the finite-dimensional statement, while the converse follows from the explicit algebraic forms and basic operator theory.

\begin{coro}
    For every infinite-dimensional Hilbert space $\mathcal{H}$, let $X_i$ be variables for elements of $\mathcal{B}(\mathcal{H})$ and let $P$ be a symbolic operator word in $X_i$ and their adjoints $X_i^*$. Then $P$ is real-eigenvalued  over $\mathcal{H}$ if and only if $P$ is formally self-adjoint.
\end{coro}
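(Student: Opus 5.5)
For the implication ``formally self-adjoint $\Rightarrow$ real-eigenvalued'' I will work directly with spectra in $\mathcal B(\mathcal H)$. Suppose a cyclic permutation $Q$ of $P$ can be written as $LL^*$, and write $P=BA$, $Q=AB$ for the appropriate subwords $A,B$. For any assignment of bounded operators, $Q=L L^*$ evaluates to a positive operator, so $\operatorname{spec}(AB)\subseteq[0,\infty)$. In a unital Banach algebra we have $\operatorname{spec}(AB)\cup\{0\}=\operatorname{spec}(BA)\cup\{0\}$ (Jacobson's lemma: if $1-AB$ is invertible with inverse $R$, then $1+BRA$ inverts $1-BA$). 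It follows that $\operatorname{spec}(P)\subseteq[0,\infty)\subseteq\mathbb R$. This step is routine, and it is the operator replacement for Fact~\ref{Fact:ABBA}; the extra point $0$ is harmless.

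For the converse I will reduce to the finite-dimensional theorem by embedding. Assume $P$ is real-eigenvalued over $\mathcal H$. Fix $n$ and complex matrices $A_1,\dots,A_\ell\in\mathbb C^{n\times n}$. Choose an $n$-dimensional subspace $\mathcal K\subseteq\mathcal H$ with an orthonormal basis, and set $\widehat A_i=A_i\oplus 0$ on $\mathcal K\oplus\mathcal K^\perp$. This is a bounded operator, and $\widehat A_i^{\,*}=A_i^*\oplus 0$. Since the embedding $A\mapsto A\oplus 0$ is a (non-unital) $*$-homomorphism, $P(\widehat A_1,\dots,\widehat A_\ell)=P(A_1,\dots,A_\ell)\oplus 0$. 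The spectrum of this block-diagonal operator contains $\operatorname{spec}(P(A_1,\dots,A_\ell))$, because an eigenvector in $\mathcal K$ stays an eigenvector. Hence every eigenvalue of $P(A)$ is real, so $P$ is real-eigenvalued over arbitrary complex square matrices.

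I then apply Corollary~\ref{thm:realsym-complex}, here with no Hermitian-restricted variables. It shows that a cyclic permutation of $P$ has the form $LL^*$ or $LL^*X_i^{\Herm}$. The second alternative needs a Hermitian variable, so only the first remains. More carefully, if the statement allows $P$ to contain Hermitian letters, I would handle that case by taking $A_i=(-1)$ scalars as in the proof of Corollary~\ref{cor:sympossemidef}. However, the corollary quantifies over arbitrary operators with adjoints, so the first alternative is the relevant one. Hence $P$ is formally self-adjoint.

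I expect the main obstacle to be bookkeeping rather than substance. The words ``real-eigenvalued'' for operators must be read as ``spectrum contained in $\mathbb R$''. I need the finite-rank embedding to preserve the point spectrum; containment $\operatorname{spec}(P(A))\subseteq\operatorname{spec}(P(\widehat A))$ suffices, and it holds trivially via eigenvectors. I also need the Banach-algebra version of $\operatorname{spec}(AB)\cup\{0\}=\operatorname{spec}(BA)\cup\{0\}$ for the forward direction. I will state both facts explicitly and cite Jacobson's lemma.
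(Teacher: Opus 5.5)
Your proposal is correct and follows the same route as the paper. The paper justifies this corollary only by remarking that finite-dimensional counterexamples embed into $\mathcal B(\mathcal H)$ via extension by zero, while the converse follows from the form $LL^*$ and basic operator theory; your use of Jacobson's lemma $\operatorname{spec}(AB)\cup\{0\}=\operatorname{spec}(BA)\cup\{0\}$ and the eigenvector argument for $P(A)\oplus 0$ simply make those two steps explicit.

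One aside needs correcting. Scalar $(-1)$ substitutions cannot rule out the form $LL^*X_i^{\mathrm{Herm}}$ for real-eigenvaluedness, since they only produce the real eigenvalue $-1$; indeed, with Hermitian letters allowed the statement would be false (e.g.\ $P=X_1^{\mathrm{Herm}}$). This does not affect your proof, because the corollary only involves $X_i$ and $X_i^*$.
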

Similarly, analogous results for spectral nonnegativity can be obtained. 
Furthermore, in the context of infinite-dimensional operator algebras, the concept of the normalized trace generalizes to that of a tracial state. Let $\mathcal{A}$ be a unital $C^*$-algebra or von Neumann algebra. A tracial state is a linear functional $\tau: \mathcal{A} \to \mathbb{C}$ that is positive ($\tau(A^*A) \ge 0$ for all $A \in \mathcal{A}$), normalized ($\tau(I) = 1$), and tracial ($\tau(AB) = \tau(BA)$ for all $A, B \in \mathcal{A}$). 
This setting serves as the foundation for the Connes Embedding Problem discussed in the introduction. Because a tracial state abstracts the cyclic invariance and positivity of the finite-dimensional matrix trace, our combinatorial characterization natively extends to this abstract setting.

\begin{coro}\label{cor:tracial_state}
    Let $P$ be a noncommutative word over complex variables and their adjoints. Then $P$ is formally self-adjoint if and only if $\tau(P(X_1, \dots, X_\ell)) \ge 0$ for all assignments of the variables in any $C^*$-algebra equipped with a tracial state $\tau$.
\end{coro}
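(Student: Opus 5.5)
Both directions reduce to results already proved: the forward direction is a one-line positivity computation, and the converse restricts to finite-dimensional matrix algebras and invokes Corollary~\ref{cor:sympossemidefcom-complex}.

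\emph{Sufficiency.} Suppose $P$ is formally self-adjoint, so some cyclic permutation $P'$ of $P$ equals $LL^*$ for a word $L$.
\begin{enumerate}
\item Write $P=BA$ and $P'=AB$, where $A,B$ are the evaluated subwords.
\item The trace property of $\tau$ gives $\tau(P)=\tau(BA)=\tau(AB)=\tau(P')$.
\item Then $\tau(P')=\tau(L L^*)\geq 0$, since $LL^*=(L^*)^*(L^*)$ is of the form $Y^*Y$ and $\tau$ is positive.
\end{enumerate}
This holds for every $C^*$-algebra and every tracial state on it.

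\emph{Necessity.} Suppose $\tau(P)\geq 0$ for every $C^*$-algebra with a tracial state and every assignment of the variables.
\begin{enumerate}
\item Specialise to $\mathcal A=M_n(\mathbb C)$, a unital $C^*$-algebra, with the normalized trace $\tau_n=\tfrac1n\tr$, a tracial state.
\item The hypothesis then says $\tfrac1n\tr P(A_1,\dots,A_\ell)\ge 0$ for all complex $n\times n$ matrices and every $n$. So $P$ is trace nonnegative over arbitrary complex square matrices.
\item Corollary~\ref{cor:sympossemidefcom-complex} (Items 1 and 3, with no Hermitian variables) gives that $P$ is formally self-adjoint. That corollary traces back to Theorem~\ref{thm:realsym2} by restricting to real matrices, where $\tr$ is real, and replacing $T$ by $*$.
\end{enumerate}

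There is no genuine obstacle. The only points needing care are:
\begin{itemize}
\item checking that the normalized matrix trace is a tracial state in the sense used here;
\item checking that $M_n(\mathbb C)$ qualifies as a $C^*$-algebra, which it does, and is also a von Neumann algebra.
\end{itemize}
The value $\tau(P)$ may be non-real for non-self-adjoint words, so "$\ge0$" is read as implicitly requiring it to be real. This reading only strengthens the hypothesis, so the necessity argument is unaffected.
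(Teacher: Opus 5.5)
Your proposal is correct and matches the paper's proof. For the forward direction you use the tracial property to pass to the cyclic rotation $LL^*$ and then positivity of $\tau$; for the converse you specialise to $M_n(\mathbb{C})$ with the normalized trace and invoke the finite-dimensional complex characterization (the paper cites Corollary~\ref{cor:complex}, which has the same content as Corollary~\ref{cor:sympossemidefcom-complex}).
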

\begin{proof}
    If $P$ is formally self-adjoint, then there exists a cyclic permutation of $P$ equating to $LL^*$. By the tracial property and positivity of the state, $\tau(P) = \tau(LL^*) = \tau(L^*L) \ge 0$, establishing the forward implication. The reverse implication holds trivially: since the property is assumed to hold for all $C^*$-algebras equipped with a tracial state, it must hold for the algebra of finite-dimensional complex matrices equipped with the standard normalized trace. By Corollary~\ref{cor:complex}, the word $P$ must be formally self-adjoint.
\end{proof}

\begin{coro}\label{cor:HilJoh-operator}
    Let $P$ be a noncommutative word over variables $\{X_1, \dots, X_\ell\}$. The following statements are equivalent:
    \begin{enumerate}
        \item For every infinite-dimensional Hilbert space $\mathcal{H}$, $P$ is spectrally nonnegative for all assignments of the variables to positive operators in $\mathcal{B}(\mathcal{H})$.
        \item For every unital $C^*$-algebra equipped with a tracial state $\tau$, $\tau(P) \ge 0$ for all assignments of the variables to positive elements in the algebra.
        \item $P$ is nearly symmetric; i.e., after a cyclic permutation, $P$ can be factored as the product of two palindromes.
    \end{enumerate}
\end{coro}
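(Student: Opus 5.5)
We prove the cycle $3\Rightarrow 1$, $3\Rightarrow 2$, and then close with $1\Rightarrow 3$ and $2\Rightarrow 3$, using Corollary~\ref{thm:HilJoh-complex} as the finite-dimensional backbone.

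For $3\Rightarrow 1$ and $3\Rightarrow 2$, write $P\eqshift QR$ with $Q,R$ palindromes in the variables $X_i$. For positive elements, a palindrome evaluates to a positive element. Indeed, if $Q=X_{i_1}\cdots X_{i_m}$ with $m$ even, then $Q=LL^*$ where $L=X_{i_1}\cdots X_{i_{m/2}}$. If $m$ is odd, then $Q=L X_{j} L^*=(LX_j^{1/2})(LX_j^{1/2})^*$ by the continuous functional calculus. For the tracial statement, $\tau(P)=\tau(QR)=\tau(Q^{1/2}RQ^{1/2})\ge 0$ by traciality and positivity of $\tau$, since $Q^{1/2}RQ^{1/2}\ge 0$. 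For the spectral statement, use the Banach-algebra identity $\sigma(AB)\cup\{0\}=\sigma(BA)\cup\{0\}$ in $\mathcal B(\mathcal H)$. This gives $\sigma(P)\subseteq\sigma(Q^{1/2}RQ^{1/2})\cup\{0\}\subseteq[0,\infty)$, where we first pass from $P$ to its cyclic rotation $QR$ and then to $Q^{1/2}(Q^{1/2}R)$. Cyclic rotations only change the spectrum by possibly adding or removing $0$, which is harmless for nonnegativity.

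For $1\Rightarrow 3$, take finite-dimensional PSD matrices $A_1,\dots,A_\ell\in\mathbb C^{n\times n}$. Embed them into $\mathcal B(\mathcal H)$ as $A_i\oplus 0$ on $\mathbb C^n\oplus(\mathbb C^n)^\perp$; these are positive operators. The word then evaluates to $P(A)\oplus P(0,\dots)$, where the second summand is $0$ (the word has positive length). Its spectrum is $\sigma(P(A))\cup\{0\}$, so spectral nonnegativity over $\mathcal B(\mathcal H)$ forces $\sigma(P(A))\subseteq[0,\infty)$. Corollary~\ref{thm:HilJoh-complex} then yields that $P$ is nearly symmetric.

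For $2\Rightarrow 3$, apply the hypothesis to the unital $C^*$-algebra $M_n(\mathbb C)$ with the normalized trace, and to positive matrices. This gives trace nonnegativity over Hermitian PSD matrices of every size, and Corollary~\ref{thm:HilJoh-complex} (Item 2 $\Rightarrow$ Item 3) concludes.

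The only nonformal point is the infinite-dimensional spectral direction $3\Rightarrow 1$. There we rely on the $\sigma(AB)\cup\{0\}=\sigma(BA)\cup\{0\}$ identity and on square roots of positive operators, rather than on any eigenvalue argument. Everything else reduces directly to the finite-dimensional corollary.
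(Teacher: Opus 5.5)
Your proposal is correct and follows essentially the same route as the paper. For (3)$\Rightarrow$(1),(2), both arguments use positivity of palindromes and compare $QR$ with $Q^{1/2}RQ^{1/2}$; for the converses, both restrict to zero-extended matrices in $\mathcal B(\mathcal H)$, or to $M_n(\mathbb C)$ with the normalized trace, and then invoke Corollary~\ref{thm:HilJoh-complex}. You also spell out details the paper leaves implicit, namely $\sigma(AB)\cup\{0\}=\sigma(BA)\cup\{0\}$, the square-root factorization for odd palindromes, and the spectrum of the zero extension.
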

\begin{proof}
    We first show $(3)$ implies $(1)$ and $(2)$. Assume $P$ is nearly symmetric. Up to a cyclic permutation (which preserves both the non-zero spectrum and the tracial state), we can write $P = Q_1 Q_2$ for palindromic words $Q_1$ and $Q_2$. Positive operators and positive $C^*$-algebra elements are inherently self-adjoint ($X_i = X_i^*$). Evaluated on such variables, any palindrome naturally factors as $W W^*$ or $W X_k W^*$, meaning $Q_1$ and $Q_2$ strictly evaluate to positive elements. Because $Q_1 Q_2$ shares its non-zero spectrum with the positive element $Q_1^{1/2} Q_2 Q_1^{1/2}$, its spectrum is strictly nonnegative, yielding $(1)$. Similarly, the tracial property gives $\tau(Q_1 Q_2) = \tau(Q_1^{1/2} Q_2 Q_1^{1/2}) \ge 0$, establishing $(2)$.

    The reverse implications follow by restriction. If $(1)$ holds universally on $\mathcal{B}(\mathcal{H})$, we can embed any finite-dimensional complex Hermitian positive semidefinite matrix into $\mathcal{B}(\mathcal{H})$ by extending it by zero on the orthogonal complement, an embedding that trivially preserves the nonnegative spectrum. Likewise, if $(2)$ holds across all unital $C^*$-algebras, it inherently applies to the algebra of finite matrices $M_n(\mathbb{C})$ equipped with the standard normalized trace. In either case, the infinite-dimensional hypotheses reduce to the finite-dimensional conditions of Theorem~\ref{thm:HilJoh-complex}, forcing $P$ to be nearly symmetric.
\end{proof}

\section{Conclusion and Open Questions}

This paper gives complete combinatorial characterizations of universal
real-eigenvaluedness, spectral nonnegativity, and trace nonnegativity for
matrix words under each of the substitution classes considered.
A striking feature of the word setting is that, within each such
class, spectral and trace nonnegativity are governed by the same
combinatorial characterization, although the precise structure depends
on the permitted substitutions. This parallel rigidity is sharply
different from the situation for general noncommutative polynomials,
where spectral and trace positivity have substantially different
algebraic and algorithmic behavior. For matrix words, by contrast, the
relevant conditions are explicit and efficiently decidable. The
same rigidity persists over complex matrices and in the operator
settings considered. These results resolve questions first posed by
Lieb and Pierce and later conjectured by Hillar and
Johnson. Thus, matrix words admit an exact and effective positivity
theory despite the absence of a comparable general theory for
noncommutative polynomials.

A natural next step is to understand what remains true for sums of
words. Products of two sums of Hermitian squares form a basic class of
universally spectrally nonnegative polynomials. However, addition
destroys the cyclic structure used throughout this paper: cyclically
permuting individual monomials does not, in general, preserve the
spectrum of their sum. Any extension of our results on words therefore may involve a genuinely global polynomial certificate. Since an exact sum of squares characterization is known to fail for the trace positivity of general polynomials, our results for monomials completely resolve the algebraic picture for this property. Consequently, the search for a global algebraic certificate remains open only for the spectral nonnegativity of polynomials.

\begin{qu}
Can one characterize, or decide, the polynomials 
$p\in
\mathbb{R}\langle
X_1,\ldots,X_\ell,X_1^T,\ldots,X_\ell^T
\rangle$
such that
$\operatorname{spec}(p)
\subseteq [0,\infty)$
for every matrix size and every real matrix substitution?
In particular, is there an algebraic certificate for this property
analogous in role to Helton's sum-of-Hermitian-squares theorem?
\end{qu}

\section{Acknowledgment}
The second author would like to thank Dan Král' for hosting the second author's visit at Masaryk University in Brno where this work was initiated. The authors would like to thank Dan Kr\'al' and Jan Volec for helpful discussions. GPT 5.5 Pro and Gemini 3.1 Pro were used solely to find inconsistencies and to improve the presentation of the paper.

\bibliographystyle{plain}
\bibliography{DMP}

\end{document}